	\theoremstyle{definition}
	\newtheorem{thm}{Theorem}[section]
	\newtheorem{lem}[thm]{Lemma}
	\newtheorem{prop}[thm]{Proposition}
	\newtheorem{rmk}{Remark}[section]
	\newcommand{\R}{\mathbb{R}}
	\newcommand{\N}{\mathbb{N}}
	\newcommand{\T}{\mathbb{T}}
	\newcommand{\TT}{\mathcal{T}}
	\newcommand{\TP}{\overline{\p}{}}
	\newcommand{\bd}[1]{\mathbf{#1}}  
	\newcommand{\RR}{\mathcal{R}}      
	\newcommand{\Om}{\Omega}
	\newcommand{\q}{\quad}
	\newcommand{\p}{\partial}
	\newcommand{\nab}{\nabla}
	\newcommand{\no}{\nonumber}
	\newcommand{\lleq}{\stackrel{L}{=}}
	\newcommand{\cnab}{\overline{\nab}}
	\newcommand{\dx}{\,\mathrm{d}x}
	\newcommand{\lee}{\langle}
	\newcommand{\ree}{\rangle}
	\newcommand{\Er}{\mathring{E}}
	\newcommand{\Kr}{\mathring{K}}
	\newcommand{\KKr}{\mathring{\mathcal{K}}}
	\newcommand{\io}{\int_{\Omega}}
	\newcommand{\ddt}{\frac{\mathrm{d}}{\mathrm{d}t}}
	\numberwithin{equation}{section}
	\newcommand{\eps}{\varepsilon}
	\newcommand{\lam}{\lambda}
	\newcommand{\ur}{\mathring{u}}
	\newcommand{\br}{\mathring{B}}
	\newcommand{\pr}{\mathring{p}}
	\newcommand{\sr}{\mathring{S}}
	\newcommand{\Dtr}{\mathring{D}_t}
	\newcommand{\rhor}{\mathring{\rho}}
	\newcommand{\nnr}{{(n+1)}}
	\newcommand{\nnn}{{(n)}}
	\newcommand{\nnl}{{(n-1)}}
	\newcommand{\nnll}{{(n-2)}}
	\newcommand{\ff}{\mathcal{F}}
	\newcommand{\ffp}{\mathcal{F}_p}	
	\newcommand{\ffpr}{\mathring{\mathcal{F}}_p}
\begin{document}
		\title{\bf Incompressible Limit of Compressible Ideal MHD Flows inside a Perfectly Conducting Wall
		}
		\date{\today}
		\author{{\sc Jiawei WANG}\thanks{Hua Loo-Keng Center for Mathematical Sciences, Academy of Mathematics and Systems Science, Chinese Academy of Sciences, Beijing, P.R.China.
				Email: \texttt{wangjiawei9609@163.com}}
				\,\,\,\,\,\, 
				{\sc Junyan ZHANG}\thanks{Department of Mathematics, National University of Singapore, Singapore. 
				Email: \texttt{zhangjy@nus.edu.sg}} }
		\maketitle
		
		\begin{abstract}
			We prove the incompressible limit of compressible ideal magnetohydrodynamic(MHD) flows in a reference domain where the magnetic field is tangential to the boundary. Unlike the case of transversal magnetic fields, the linearized problem of our case is not well-posed in standard Sobolev space $H^m~(m\geq 2)$, while the incompressible problem is still well-posed in $H^m$. The key observation to overcome the difficulty is a hidden structure contributed by Lorentz force in the vorticity analysis, which reveals that one should trade one normal derivative for two tangential derivatives together with a gain of Mach number weight $\eps^2$. Thus, the energy functional should be defined by using suitable anisotropic Sobolev spaces. The weights of Mach number should be carefully chosen according to the number of tangential derivatives, such that the energy estimates are uniform in Mach number. Besides, part of the proof is similar to the study of compressible water waves, so our result opens the possibility to study the incompressible limit of free-boundary problems in ideal MHD.
			
		\end{abstract}
		
		\noindent \textbf{Keywords}: Compressible ideal MHD, Incompressible limit, Perfectly conducting wall.
		
		\noindent \textbf{MSC(2020) codes}: 35L60, 35Q35, 76M45, 76W05.
		\setcounter{tocdepth}{1}
		\tableofcontents
		
		\section{Introduction}
		
		In this paper, we consider the compressible ideal magnetohydrodynamic(MHD) equations
		\begin{equation}
			\begin{cases}
				D_t\rho+\rho(\nabla\cdot u)=0~~~&\text{in}~[0,T]\times\Om, \\
				\rho D_t u=B\cdot\nabla B-\nabla P,~~P:=p+\frac{1}{2}|B|^2~~~& \text{in}~[0,T]\times\Om, \\
				D_t B=B\cdot\nabla u-B(\nabla\cdot u)~~~&\text{in}~[0,T]\times\Om, \\
				\nabla\cdot B=0~~~&\text{in}~[0,T]\times\Om,\\
				D_tS=0~~~&\text{in}~[0,T]\times\Om,
			\end{cases}\label{CMHD}
		\end{equation}
		describing the motion of a compressible conducting fluid in an electro-magnetic field. Here $\Om:=\mathbb{T}^{d-1}\times (-1,1)$ is the reference domain in $\R^d~(d=2,3)$ with boundary $\Sigma:=\Sigma_+\cup\Sigma_-$ and $\Sigma_\pm:=\{x_d=\pm 1\}$. $\nabla:=(\p_{x_1},\cdots,\p_{x_d})^\mathrm{T}$ is the standard spatial derivative. $D_t:=\p_t+u\cdot\nabla$ is the material derivative. The fluid velocity, the magnetic field, the fluid density, the fluid pressure and the entropy are denoted by $u=(u_1,\cdots,u_d)^\mathrm{T}$, $B=(B_1,\cdots,B_d)^\mathrm{T}$, $\rho$, $p$ and $S$ respectively. Note that the last equation of \eqref{CMHD} is derived from the equation of total energy and Gibbs relation. See \cite[Ch. 4.3]{MHDphy} for more details. Throughout this paper, we will use follow the convention of Einstein's summation, that is, repeated indices represent taking sum over the indices.

We assume the fluid pressure $p=p(\rho, S)$ to be a given smooth function of $\rho, S$ which satisfies 
		\begin{align}
			\rho>0,~~~\frac{\p p}{\p\rho}> 0, \q \text{in}\,\, \bar{\Om}.
			\label{EoS}
		\end{align}
		These two conditions also guarantee the hyperbolicity of system \eqref{CMHD}.

		The initial and boundary conditions of system \eqref{CMHD} are
		\begin{align}
	\label{data}	    (u,B,\rho,S)|_{t=0}=(u_0, B_0, \rho_0,S_0) ~~~& \text{in }[0,T]\times\Om,\\
  \label{bdry cond}			u_d=B_d=0 ~~~& \text{on }[0,T]\times\Sigma,
		\end{align} where the boundary condition for $u_d$ is the slip boundary condition, and the boundary condition for $B_d$ shows that $\Sigma_\pm$ are perfectly conducting walls.

	    \begin{rmk} The conditions $\nabla\cdot B=0$ in $\Om$ and $B_d=0$ on $\Sigma$ are both constraints for initial data so that the MHD system is not over-determined. One can show that they propagate within the lifespan of the solution. Using the theory of hyperbolic system with charateristic boundary condition \cite{Rauch1985}, one can show that the correct number of boundary conditions when assuming $u_d|_{\Sigma}=0$ is 1 (see \cite{TW2020MHDLWP}). So, $B_d|_{\Sigma}=0$ has to be an initial constraint.
	    \end{rmk}

The initial-boundary value problem \eqref{CMHD}-\eqref{bdry cond} is used to characterize the motion of a plasma confined in a rigid perfectly conducting wall, which is an important model in the study of laboratory plasma confinement problems. See \cite[Ch. 4.6]{MHDphy} for more details. To make the initial-boundary value problem \eqref{CMHD}-\eqref{bdry cond} solvable, we need to require the initial data satisfying the compatibility conditions up to certain order. For $m\in\N$, we define the $m$-th order compatibility conditions to be
\begin{equation}\label{comp cond}
B_d|_{\{t=0\}\times\Sigma}=0,~~\p_t^j u_d|_{\{t=0\}\times\Sigma}=0,~~0\leq j\leq m.
\end{equation}It should be noted that $ \p_t^j B_d|_{\{t=0\}\times\Sigma}=0$ also holds for $1\leq j\leq m$ provided \eqref{comp cond} holds.

		\subsection{The equation of state and sound speed} \label{sect mach number definition}

		This paper is devoted to studying the behavior of the solution of \eqref{CMHD} as the sound speed goes to infinity, which is known to be the incompressible limit. Physically, the sound speed is defined by $c_s:=\sqrt{\frac{\p p}{\p\rho}}$. Mathematically, it is convenient to view the sound speed as a parameter $\lambda$. A typical choice of the equation of states $p_{\lam}(\rho, S)$ would be the polytropic gas
		\begin{align}
			p_{\lam}(\rho,S)=\lam^2\left(\rho^{\gamma}\exp(S/C_V)-1\right),~~~\gamma\geq 1,~~C_V>0.
		\end{align}
		 When viewing the density as a function of the pressure and the entropy, this indicates
		\begin{align}\label{eos2}
			\rho_{\lam} (p,S) =  \left(\left(1+\frac{p}{\lam^2}\right)e^{-\frac{S}{C_V}}\right)^{\frac{1}{\gamma}}, \quad \text{and}\,\,\log \left(\rho_{\lam} (p,S)\right) = \gamma^{-1} \log\left(\left(1+\frac{p}{\lam^2}\right)e^{-\frac{S}{C_V}}\right). 
		\end{align} Let $\ff:= \log \rho$. Since $\frac{\p p}{\p\rho}>0$ indicates $\frac{\p\ff}{\p p}>0$, using $D_t S=0$, the first equation of \eqref{CMHD} is equivalent to 
		\begin{equation} \label{continuity eq f}
			\frac{\p\ff}{\p p} D_t p  +\nab\cdot u=0. 
		\end{equation}
		Hence,  we can view $\ff=\log\rho$ as a parametrized family $\{\ff_\eps (p,S)\}$ as well, where $\eps = \frac{1}{\lam}$. 
		Since we work on the case when the entropy and velocity are both bounded (later we will prove $\|u,S,\eps p\|_{H^4(\Om)}$ is bounded uniformly in $\eps$), we again slightly abuse the terminology, and call $\lam$ the sound speed and call $\eps$ the Mach number and thus $M=O(\eps)$. It is easy to see that there exists $C>0$ such that
		\begin{align}
			\frac{1}{\rho}\frac{\p\rho}{\p p}(p,S)\leq C \eps^2. \label{ff'}
		\end{align} Furthermore, if $\eps>0$ is sufficiently small, there exists $C>0$ such that
		\begin{align}
			C^{-1} \eps^2 \leq \frac{\p\ff_\eps}{\p p} (p,S)=\frac{1}{\rho}\frac{\p\rho}{\p p}(p,S)\leq C \eps^2. \label{ff' is lambda}
		\end{align}
		Also, we have for some $C_k>0$
		\begin{equation}
			\frac{\p\ff_\eps}{\p p} = \gamma^{-1} \frac{\eps^2}{1+\eps^2 p},~~\frac{\p^k\ff_\eps}{\p p^k}=C_k\frac{\eps^{2k}}{(1+\eps^2 p)^{k}}, ~~~k\in\N^*,
		\end{equation} and so 
		\begin{align} \label{ff property}
			|\p_p^k\ff_\eps(p,S) |\leq C, \quad |\p_p^k\ff_\eps(p,S)| \leq  C \p_p\ff_{\eps} (p,S)~~0\leq k\leq 8,~k\in\N.
		\end{align}
		 Writing $\ffp:=\frac{\p \ff}{\p p}$, system \eqref{CMHD} is reformulated as follows
		\begin{equation}\label{CMHD3}
			\begin{cases}
				\ffp D_t p  +\nab\cdot u=0~~~&\text{in}~[0,T]\times\Om, \\
				\rho D_t u=B\cdot\nabla B-\nabla P,~~P:=p+\frac{1}{2}|B|^2~~~& \text{in}~[0,T]\times\Om, \\
				D_t B=B\cdot\nabla u-B(\nabla\cdot u)~~~&\text{in}~[0,T]\times\Om, \\
				\nabla\cdot B=0~~~&\text{in}~[0,T]\times\Om,\\
				D_t S=0~~~&\text{in}~[0,T]\times\Om,\\
	                          p=p(\rho,S),\frac{\p p}{\p\rho}>0~~~&\text{in}~[0,T]\times\bar{\Om},\\
				u_d=B_d=0 ~~~& \text{on}~[0,T]\times\Sigma,\\
				(u,B,\rho,S)|_{t=0}=(u_0, B_0, \rho_0,S_0)~~~& \text{on}~\{t=0\}\times\Om.\\
			\end{cases}
		\end{equation}
		When considering the incompressible limit, we sometimes replace $\ffp$ by $\eps^2$ for simplicity of notations.
		
		\subsection{An overview of previous results}\label{sect history}
		
The incompressible limit of compressible inviscid fluids is considered to be a type of singular limit of hyperbolic system: the pressure for compressible fluids is a variable of hyperbolic system whereas the pressure for incompressible fluids is a Lagrangian multiplier and the equation of state is no longer valid. Early works about compressible Euler equations can be dated back to Klainerman-Majda \cite{Klainerman1981limit,Klainerman1982limit} when the domain is the whole space $\R^d$ or the periodic domain $\T^d$,  Ebin \cite{Ebin1982limit} and Schochet \cite{Schochet1986limit, Schochet1987limit} when the domain is bounded, and Isozaki \cite{Isozaki1987limit} when considering an exterior domain. The abovementioned papers consider the case of ``well-prepared" initial data, which means the compressible initial data is exactly a small perturbation of a given incompressible initial data. When the initial data is ``ill-prepared", that is, the compressible initial data is the small perturbation of incompressible initial data plus a highly oscillatory part, we refer to \cite{Ukai1986limit, Asano1987limit, Isozaki1987limit, Schochet1994limit, Iguchi1997limit, Secchi2000, Metivier2001limit, Alazard2005limit} and references therein. The precise definitions of ``well-prepared data" and ``ill-prepared data" can be found in \cite{Secchi2000, Metivier2001limit, Alazard2005limit}.

For compressible ideal MHD, the incompressible limit in the whole space was studied by Jiang-Ju-Li \cite{JJLMHDlimit2}. However, when the domain has a boundary, the study of incompressible limit for ideal compressible MHD is much more subtle. The first difficulty is that the vorticity estimate for ideal compressible MHD cannot be closed when using standard Sobolev spaces, and thus the method presented in \cite{Schochet1986limit, Schochet1987limit} is no longer valid. When the magnetic field is not tangential to the boundary, one can use such transversality to compensate the loss of normal derivative arising in the vorticity analysis. See Yanagisawa \cite{1987MHDfirst} for the well-posedness result under the condition $B\times N|_{\Sigma}=\mathbf{0}$. As for the singular limits, we refer to Ju-Schochet-Xu \cite{JuMHD2} for the singular limits when both Mach number and Alfv\'en number converge to zero and Jiang-Ju-Xu \cite{JuMHD1} for the low Alfv\'en number limit of incompressible ideal MHD whose local existence was proved via a low-Mach-number regime.

Unfortunately, when the magnetic field is tangential to the boundary, that is, $B\cdot N|_{\Sigma}=0$, Ohno-Shirota \cite{OS1998MHDill} proved that the linearized problem near a non-zero magnetic field is ill-posed in standard Sobolev spaces. In this case, one has to use suitable anisotropic Sobolev spaces introduced by Chen \cite{ChenSX} to proved the well-posedness. See Yanagisawa-Matsumura \cite{1991MHDfirst} and Secchi \cite{Secchi1995, Secchi1996}.

Therefore, an essential difficulty in establishing the incompressible limit for ideal MHD with $B\cdot N|_{\Sigma}=0$ is the ``incompatibility of function spaces for the existence": compressible ideal MHD flow with the perfectly conducting wall condition may not be well-posed in standard Sobolev spaces, while the corresponding incompressible problem is well-posed (see, for example, Gu-Wang \cite{GuWang2016LWP}). So far, there is only a partial answer for this incompressible limit problem. Ju and the first author \cite{JuWangMHDlimit} proved the incompressible limit in a suitable closed subspace of standard Sobolev space, introduced by Secchi \cite{Secchi1995-2}, by adding more restrictive constraints for the boundary value of initial data. Specifically, \cite{Secchi1995-2, JuWangMHDlimit} require the initial data to satisfy
\[
\p_3^{2k}(u_3,B_3)|_{\Sigma}=0,~\p_3^{2k+1}(p,u_1,u_2,B_1,B_2,S)|_{\Sigma}=0,~~k=0,1,\cdots,\lfloor\frac{m-1}{2}\rfloor.
\] However, the physical interpretation of these ``extra constraints" is still unclear. In other words, it is still unknown how to thoroughly overcome the difficulty caused by the abovementioned ``incompatibility". 

The aim of this paper is to give a definitive answer to the incompressible limit problem of ideal MHD with the perfectly conducting wall condition when the initial data is ``well-prepared". Indeed, in the vorticity analysis, there is a hidden structure brought by the Lorentz force term. To the best of our knowledge, such an observation never appears in any previous works, but it can really help us easily prove the uniform estimates in Mach number and also illustrates why the anisotropic Sobolev spaces are naturally introduced and necessary for compressible MHD but are not needed for incompressible MHD.

		\subsection{The main theorems}		
		Before stating our results, we should first define the anisotropic Sobolev space $H_*^m(\Omega)$ for $m\in\N$ and $\Om=\T^{d-1}\times[-1,1]$. Let $\omega=\omega(x_d)$ be a cutoff function\footnote{The choice of $\omega(x_d)$ is not unique. We just need $\omega(x_d)$ vanishes on $\Sigma$ and is comparable to the distance function near $\Sigma$.} on $[-1,1]$ defined by $\omega(x_d)=(1-x_d)(1+x_d)$. Then we define $H_*^m(\Omega)$ for $m\in\N^*$ as follows
\[
H_*^m(\Omega):=\left\{f\in L^2(\Omega)\bigg| (\omega\p_d)^{\alpha_{d+1}}\p_1^{\alpha_1}\cdots\p_d^{\alpha_d} f\in L^2(\Omega),~~\forall \alpha \text{ with } \sum_{j=1}^{d-1}\alpha_j +2\alpha_d+\alpha_{d+1}\leq m\right\},
\]equipped with the norm
\begin{equation}\label{anisotropic1}
\|f\|_{H_*^m(\Omega)}^2:=\sum_{\sum_{j=1}^{d-1}\alpha_j +2\alpha_d+\alpha_{d+1}\leq m}\|(\omega\p_d)^{\alpha_{d+1}}\p_1^{\alpha_1}\cdots\p_d^{\alpha_d} f\|_{L^2(\Omega)}^2.
\end{equation} For any multi-index $\alpha:=(\alpha_0,\alpha_1,\cdots,\alpha_{d},\alpha_{d+1})\in\N^{d+2}$, we define
\[
\p_*^\alpha:=\p_t^{\alpha_0}(\omega\p_3)^{\alpha_{d+1}}\p_1^{\alpha_1}\cdots\p_d^{\alpha_d},~~\lee \alpha\ree:=\sum_{j=0}^{d-1}\alpha_j +2\alpha_d+\alpha_{d+1},
\]and define the \textbf{space-time anisotropic Sobolev norm} $\|\cdot\|_{m,*}$ to be
\begin{equation}\label{anisotropic2}
\|f\|_{m,*}^2:=\sum_{\lee\alpha\ree\leq m}\|\p_*^\alpha f\|_{L^2(\Omega)}^2=\sum_{\alpha_0\leq m}\|\p_t^{\alpha_0}f\|_{H_*^{m-\alpha_0}(\Omega)}^2.
\end{equation}

We also denote the interior Sobolev norm to be $\|f\|_{s}:= \|f(t,\cdot)\|_{H^s(\Omega)}$ for any function $f(t,x)\text{ on }[0,T]\times\Omega$ and denote the boundary Sobolev norm to be $|f|_{s}:= |f(t,\cdot)|_{H^s(\Sigma)}$ for any function $f(t,x)\text{ on }[0,T]\times\Sigma$. From now on, we assume the dimension to be $d=3$, that is, $\Om=\T^2\times(-1,1)$ and $\Sigma_\pm=\{x_3=\pm 1\}$. In the proof, we will see the 2D case follow in the same manner as the 3D case up to a slight modification in the vorticity analysis. 

First, we prove the local existence togther with uniform-in-$\eps$ estimates. 
		\begin{thm}[Local well-posedness and uniform-in-$\eps$ estimates]\label{main thm, well data}
			Let $\eps\in(0,1)$ be fixed. Let $(u_0, B_0, \rho_0,S_0)\in H^8(\Om)\times H^8(\Om)\times H^8(\Om)\times H^8(\Om)$ be the initial data of \eqref{CMHD3} satisfying the compatibility conditions \eqref{comp cond} up to 7-th order and
			\begin{equation}
				E(0)\le M
			\end{equation}
		    for some $M>0$ independent of $\eps$. Then there exists $T>0$ depending only on $M$, such that \eqref{CMHD3} admits a unique solution $(p(t),u(t),B(t),S(t))$ that verifies the energy estimate
		    \begin{equation}
		    	\sup_{t\in[0,T]}E(t) \le P(E(0)),
		    \end{equation}
	        where $P(\cdots)$ is a generic polynomial in its arguments, and the energy $E(t)$ is defined to be
	        \begin{equation}
	        		\begin{aligned}\label{energy intro}
	        			E(t)=&~E_4(t) + E_5(t) + E_6(t) + E_7(t) + E_8(t),\\
	        			E_{4+l}(t)=&\sum_{\lee\alpha\ree=2l}\sum_{k=0}^{4-l}\left\|\eps^{(k-1)_++2l}  \TT^{\alpha}\p_t^k\left(u,B,S,\ffp^{\frac{(k+\alpha_0-l-3)_+}{2}}p\right)\right\|_{4-k-l}^2~~0\leq l\leq 4,
	        		\end{aligned}
	        \end{equation}
            where $K_+:=\max\{K,0\}$ and we denote $\TT^{\alpha}:=(\omega(x_3)\p_3)^{\alpha_4}\p_t^{\alpha_0}\p_1^{\alpha_1}\p_2^{\alpha_2}$ to be a high-order tangential derivative for the multi-index $\alpha=(\alpha_0,\alpha_1,\alpha_2,0,\alpha_4)$ with length (for the anisotropic Sobolev spaces) $\lee \alpha\ree=\alpha_0+\alpha_1+\alpha_2+2\times0+\alpha_4$. In the rest of this paper, we sometimes write $\TT^k$ to represent a tangential derivative $\TT^{\alpha}$ with order $\lee\alpha\ree =k$ when we do not need to specify what the derivative $\TT^{\alpha}$ contains.
		\end{thm}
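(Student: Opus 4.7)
The plan is to follow the standard program for quasilinear symmetric hyperbolic initial-boundary value problems, adapted to the anisotropic and Mach-number-sensitive setting: first establish uniform-in-$\eps$ a priori estimates for the nonlinear energy $E(t)$ defined in \eqref{energy intro}, then construct solutions to a linearized/mollified system directly in the anisotropic spaces $H_*^m$ and pass to the limit, and finally run a continuity argument to obtain a time of existence independent of $\eps$. The substance of the proof lies in the a priori estimates, which I would split into (i) a tangential part, obtained by applying $\TT^\alpha$ directly to \eqref{CMHD3}, and (ii) a div-curl part, which recovers the anisotropic normal regularity.

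For the tangential piece, I apply $\TT^\alpha$ with $\lee\alpha\ree=2l$ to each equation in \eqref{CMHD3} and pair against $\eps^{2l}(\TT^\alpha u,\TT^\alpha B,\sqrt{\ffp}\,\TT^\alpha p)$, with an extra $\eps^{(k-1)_+}$ on mixed time derivatives in the Klainerman-Majda spirit. Since $\omega|_\Sigma=0$ and $u_3|_\Sigma=B_3|_\Sigma=0$, all boundary fluxes produced by the $(\omega\p_3)$-factors and by the normal components vanish, so the leading symmetric-hyperbolic identity closes at top order. The critical point is the vorticity/Lorentz analysis: taking $\text{curl}$ of the momentum equation and using the induction equation exhibits a cancellation in which the top-order contribution is controlled by $D_t^2-(B\cdot\nab)^2$, so each $\p_3$ applied to $\text{curl}\,u$ can be replaced by two tangential derivatives at the price of a factor $\eps^2$ coming from inverting $\ffp D_t p=-\nab\cdot u$. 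This is precisely the hidden structure mentioned in the introduction, and it explains both the $2\alpha_d+\alpha_{d+1}$ weighting in $H_*^m$ and the $\eps^{2l}$ factor in $E_{4+l}$.

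Once the tangential norms are controlled, the normal-derivative recovery proceeds by a Cheng-Shkoller-type div-curl estimate applied anisotropically: $\nab\cdot u=-\ffp D_t p$ with the pressure weight, $\text{curl}\,u$ from the vorticity bound, and the tangential trace of $u_3$ on $\Sigma$ supply the missing $\p_3$-direction. The same argument applied to $B$ uses $\nab\cdot B=0$ (which propagates exactly) and the $B$-vorticity equation. The weights $\eps^{(k-1)_+}$ and $\ffp^{(k+\alpha_0-l-3)_+/2}$ in $E_{4+l}$ are calibrated precisely so that whenever one differentiates the continuity equation in time and inverts it to express $\p_t p$ in terms of $\nab\cdot u$, the resulting $\eps^{-2}$ loss is absorbed; the case distinction via $(\cdot)_+$ accounts for the fact that the loss only occurs after a sufficient number of $\p_t$'s have been spent.

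The main obstacle is bookkeeping across the three coexisting scales: the anisotropic weight $(\omega\p_3)^{\alpha_4}$, the Mach-number power $\eps^k$, and the pressure weight $\ffp^{k/2}$ must remain compatible through every commutator $[\TT^\alpha,\rho D_t]$, $[\TT^\alpha,B\cdot\nab]$, and $[\TT^\alpha,\ffp D_t]$ that the estimates generate. In particular the commutator $[\TT^\alpha,D_t]$ produces a $\p_3 u_3$ term which, via the continuity equation, reintroduces an $\eps^{-2}$ that must be rebalanced using \eqref{ff property} and the a priori control $\|(\eps p,u,B,S)\|_{H^4}\lesssim 1$. A secondary but substantial difficulty is the construction step: the Ohno-Shirota ill-posedness \cite{OS1998MHDill} forbids any $H^m$ iteration, so the approximate scheme has to be set up directly in $H_*^m$, for instance by tangentially mollifying the coefficients and solving a symmetrized linear MHD system whose uniform $H_*^m$ well-posedness must itself be proved in parallel, with all the same weighted cancellations verified at the linearized level.
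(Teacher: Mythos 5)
Your proposal follows essentially the same route as the paper: uniform-in-$\eps$ a priori estimates built from tangential estimates (where $u_3|_\Sigma=B_3|_\Sigma=0$ and $\omega|_\Sigma=0$ kill all boundary fluxes), a Hodge div-curl recovery of normal derivatives, and the key vorticity step in which the term $B(\nab\cdot u)$ from the induction equation is rewritten via the continuity and momentum equations so that one normal derivative is traded for two tangential derivatives with an $\eps^2$ weight, the Lorentz-force contribution $B\times D_t(B\times\p^3(\nab\times B))$ closing into an $\eps$-weighted energy term rather than a derivative loss. The construction step is likewise the paper's: a linearized iteration solved in $H_*^8$ via the Rauch/Secchi characteristic-boundary theory with the same weighted energy functional, so the proposal matches the paper's proof in all essentials.
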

		   
		\begin{rmk}[Correction of $E_4(t)$]
		We note that the norm $\|p\|_4^2$ in $E_4(t)$ defined by \eqref{energy intro} should be replaced by $\|\sqrt{\ffp} p\|_0^2+\|\nab p\|_3^2$ because we do not have $L^2$ estimates of $p$ without $\eps$ weight. We still write $\|p\|_4^2$ as above for simplicity of notations.
		\end{rmk}

		\begin{rmk}[``Prepared" initial data]
			The above estimate only requires $\nab\cdot u_0=O(\eps)$ and $\p_t u|_{t=0}=O(1)$. In this case, the compressible data $u_0$ is a small perturbation of an incompressible data $u_0^0$, and this perturbation is \textit{completely contributed by the compressibility}. Such compressible data are usually called ``well-prepared initial data"\footnote{One can find the definitions of ``well-prepared" and ``ill-prepared" in \cite{Metivier2001limit, Alazard2005limit} for rescaled Euler system, which is equivalent to the statement in our paper.}.
		\end{rmk}

			\begin{rmk}[Weights of Mach number of $p$]
		In \eqref{energy intro}, the weight of Mach number of $p$ is slightly different from $(u,B,S)$, but such difference only occurs when $\TT^\alpha$ are full time derivatives and $k=4-l$. In fact, due to $k\leq 4-l$ and $\alpha_0\leq \lee\alpha\ree=2l$, we know $(k+\alpha_0-l-3)_+$ is always equal to zero unless $\alpha_0=2l$ and $k=4-l$ simultanously hold.
		\end{rmk}

	\begin{rmk}[Relations with anisotropic Sobolev space]
		The energy functional $E(t)$ above is considered as a variant of $\|\cdot\|_{8,*}$ norm at time $t>0$. For different multi-index $\alpha$, we impose different Mach number weights according to the number of tangential derivatives that appear in $\p_*^{\alpha}$, such that the energy estimate for the slightly modified $\|\cdot\|_{8,*}$ norm is uniform in $\eps>0$.
		\end{rmk}	

		\begin{rmk}[Choice of regularity]
		We propose $H^4$ regularity for the energy functional because lots of commutator estimates require the bound for $\|\TP^2(u,B)\|_{L^{\infty}}$. Recall that $H^{\frac{d}{2}+\delta}\hookrightarrow L^{\infty}$, so it would be convenient to choose $H^{2+\lceil\frac{d}{2}+\delta\rceil}$ regularity, that is, $H^4$ for $d=2,3$. It should be noted that previous results about local existence of \eqref{CMHD3}, e.g, \cite{Secchi1995, Secchi1996, Shizuta1994}, require the initial data to belong to $H_*^{2m}(\Om)$ for $m\geq 8$ instead of $m\geq 4$. In Appendix \ref{apdx LWP}, we establish the local well-posedness of \eqref{CMHD3} for initial data belonging to $H_*^8(\Om)$ and satisfying the assumption of Theorem \ref{main thm, well data}. The energy functional used to prove the local existence is still $E(t)$. With this new existence theorem and following the same strategies, the results in this paper can be easily generalized to the case when $H^4$ and $H_*^8$ are replaced by $H^m$ and $H_*^{2m}~(m\geq 4)$ respectively.
		\end{rmk}	

		\begin{rmk}[Compatibility conditions and regularity of initial data]
		The initial data are assumed to be $H^8(\Om)$ functions instead of merely $H_*^8(\Om)$ functions because we have to guarantee they satisfy the compatibility conditions up to 7-th order and the uniform-in-$\eps$ bound $E(0)\leq M$. For example, the 7-th order compatibility conditions requires $\p_t^7u_3|_{\{t=0\}\times\Sigma}=0$, but $\p_t^7 u|_{t=0}$ only belongs to $H_*^1(\Om)$ whose trace on the boundary may have no meaning if we choose initial data from $H_*^8(\Om)$. On the other hand, if we alternatively choose initial data in $H^8(\Om)$, then at least we have $\p_t^7u|_{t=0}\in H^1(\Om)$ whose trace on $\Sigma$ can be defined in $H^{\frac12}(\Sigma)$. See also \cite[Theorem 2.1]{Secchi1996}, \cite[Section 8.3]{Zhang2021CMHD} for the discussion on this issue.
		\end{rmk}

		The next main theorem concerns the incompressible limit. We consider the incompressible MHD equations together with a transport equation satisfied by $(u^0,B^0,\pi,S^0)$ with incompressible initial data $(u_0^0, B_0^0)$ and $S_0^0$:
		\begin{equation} \label{IMHD}
			\begin{cases}
				\varrho(\p_t u^0 + u^0\cdot\nab u^0) -B^0\cdot\nab B^0+ \nab (\pi+\frac12|B^0|^2) =0&~~~ \text{in}~[0,T]\times \Omega,\\
				\p_t B^0+ u^0\cdot\nab B^0 -B^0\cdot\nab u^0=0 &~~~ \text{in}~[0,T]\times \Omega,\\
				\p_tS^0+u^0\cdot\nab S^0=0&~~~ \text{in}~[0,T]\times \Omega,\\
				\nab\cdot u^0=\nab\cdot B^0=0&~~~ \text{in}~[0,T]\times \Omega,\\
				u_3^0=B_3^0=0&~~~\text{on}~[0,T]\times\Sigma,\\
				(u_0,B_0,S_0)|_{t=0}=(u_0^0, B_0^0,S_0^0)&~~~\text{on}~\{t=0\}\times\Om.
			\end{cases}
		\end{equation}

		\begin{thm}[\textbf{Incompressible limit}] \label{main thm, limit}
			Under the hypothesis of Theorem \ref{main thm, well data}, we assume that $(u_0,B_0,S_0) \to (u_0^0,B_0^0,S_0^0)$ in $H^4(\Omega)$ as $\eps\to 0$ for $\nab\cdot u_0^0=\nab\cdot B_0^0=0$ in $\Om$ with $u_{03}^0=B_{03}^0=0$ on $\Sigma$. Then it holds that 
			\begin{align*}
				(u,B,S) \to& (u^0,B^0,S^0) \quad \text{weakly-* in } L^{\infty}([0,T];H^4(\Om)) \text{ and strongly in } C([0,T];H^{4-\delta}(\Om))
			\end{align*}
			for $\delta>0$. $(u^0,B^0,S^0)$ solves \eqref{IMHD}, that is, the incompressible MHD equations together with a transport equation of $S^0$. Here $\varrho$ satisfies the transport equation $$\p_t\varrho+u^0\cdot\nab \varrho =0,~~\varrho|_{t=0}=\rho(0,S_0^0),$$ where we consider the equation of state \eqref{eos2} as $\rho=\rho(\eps^2 p, S)$, that is, a function of $\eps^2 p$ and $S$.Also, there exists a function $\pi$ such that $\nab\pi\in C([0,T];H^3(\Om))$ satisfies the incompressible MHD system \eqref{IMHD} and the convergence result
\[
\rho^{-1}\nab p\to \varrho^{-1}\nab \pi\quad \text{weakly-* in } L^{\infty}([0,T];H^3(\Om)) \text{ and strongly in } C([0,T];H^{3-\delta}(\Om)).
\]
		\end{thm}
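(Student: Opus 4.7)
The plan is to exploit the uniform-in-$\eps$ bound $\sup_{t\in[0,T]}E(t)\le P(E(0))$ of Theorem \ref{main thm, well data} via an Arzel\`a--Ascoli compactness argument, pass to the limit in \eqref{CMHD3} equation by equation, and finally recover $\pi$ by identifying the strong limit of $\nab P^\eps$ extracted from the compressible momentum equation. The $l=0$ block of $E(t)$ carries the two decisive unweighted bounds: the $(k,\alpha)=(0,0)$ entry gives $\|(u^\eps,B^\eps,S^\eps)\|_4\le C$, and the $(k,\alpha)=(1,0)$ entry, for which $(k+\alpha_0-l-3)_+=0$, gives $\|\partial_t(u^\eps,B^\eps,S^\eps,p^\eps)\|_3\le C$, both uniformly in $\eps$. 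Combined with the compact embedding $H^4(\Om)\hookrightarrow\hookrightarrow H^{4-\delta}(\Om)$, a Banach-space Arzel\`a--Ascoli argument furnishes a subsequence and a limit $(u^0,B^0,S^0)$ with
\[
(u^\eps,B^\eps,S^\eps)\rightharpoonup^{*}(u^0,B^0,S^0)\text{ in }L^{\infty}([0,T];H^4),\q (u^\eps,B^\eps,S^\eps)\to(u^0,B^0,S^0)\text{ in }C([0,T];H^{4-\delta}),
\]
for every $\delta>0$; the boundary conditions $u_3^0=B_3^0=0$ together with $\nab\cdot B^0=0$ pass to the limit by continuity of the trace and the divergence.

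\textbf{Identifying the limit system.} Using $\ffp\le C\eps^2$ from \eqref{ff'} and the Banach-algebra property of $H^3(\Om)$,
\[
\|\nab\cdot u^\eps\|_{L^{\infty}([0,T];H^3)}=\|\ffp D_t p^\eps\|_{L^{\infty}([0,T];H^3)}=O(\eps^2),
\]
so $\nab\cdot u^0=0$. Since also $\|\eps^2 p^\eps\|_{L^2}=O(\eps)$, the smoothness of $\rho(\cdot,\cdot)$ in \eqref{eos2} and the strong convergence of $S^\eps$ yield $\rho^\eps=\rho(\eps^2 p^\eps,S^\eps)\to\rho(0,S^0)=:\varrho$ in $C([0,T];H^{4-\delta})$, and the transport equation for $\varrho$ follows from $D_tS^0=0$ by the chain rule. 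The entropy and magnetic equations pass to the limit directly. To extract the pressure, rewrite the momentum equation as $\nab P^\eps=-\rho^\eps D_t u^\eps+B^\eps\cdot\nab B^\eps$, which gives $\nab P^\eps$ uniformly bounded in $L^{\infty}([0,T];H^3)$; moreover $\partial_t P^\eps=\partial_t p^\eps+B^\eps\cdot\partial_t B^\eps$ is uniformly bounded in $L^{\infty}([0,T];H^3)$ by the same unweighted $\partial_t$-bounds, so $\partial_t\nab P^\eps\in L^{\infty}([0,T];H^2)$ uniformly. Interpolating the resulting time modulus $\|\nab P^\eps(t)-\nab P^\eps(s)\|_{H^{3-\delta}}\le C|t-s|^\delta$ with the pointwise compactness $H^3\hookrightarrow\hookrightarrow H^{3-\delta}$, a second Arzel\`a--Ascoli argument yields $\nab P^\eps\to L$ strongly in $C([0,T];H^{3-\delta})$; passing to the weak-$*$ limit in the momentum equation identifies $L=-\varrho D_t u^0+B^0\cdot\nab B^0$. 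As a strong limit of exact 1-forms with vanishing periods around the two generators of $\T^2$, $L$ is itself the gradient of some function $P^0$ on $\Om$; setting $\pi:=P^0-\tfrac12|B^0|^2$ and using $\nab p^\eps=\nab P^\eps-\tfrac12\nab|B^\eps|^2$ together with the strong convergence of $B^\eps$, we conclude $\nab p^\eps\to\nab\pi$ strongly in $C([0,T];H^{3-\delta})$, hence $(\rho^\eps)^{-1}\nab p^\eps\to\varrho^{-1}\nab\pi$ in the stated topologies. The quadruple $(u^0,B^0,S^0,\pi)$ solves \eqref{IMHD}, and uniqueness of $H^4$-solutions of \eqref{IMHD} (cf.\ \cite{GuWang2016LWP}) upgrades the subsequential limit to the full family.

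\textbf{Main obstacle.} The principal subtlety is that $p^\eps$ itself is not uniformly bounded in $L^2(\Om)$ — only $\eps p^\eps$ is — so $\pi$ cannot be extracted as a direct weak limit of $p^\eps$. One must instead reconstruct $\nab\pi$ from the momentum equation via the two-step Arzel\`a--Ascoli argument above, which is precisely why Theorem \ref{main thm, limit} states convergence at the level of $\rho^{-1}\nab p$ rather than of $p$. Beyond this one point, the argument is a clean Aubin--Lions plus continuity-of-composition exercise whose hypotheses are supplied entirely by the $E_4$ block of Theorem \ref{main thm, well data}; the many $\eps$-weighted higher-derivative contributions to $E(t)$ are indispensable for closing the Mach-uniform estimate itself but play no further role in the limit argument.
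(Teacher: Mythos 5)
Your overall architecture is the same as the paper's: uniform bounds from the $E_4$ block, Aubin--Lions compactness for $(u,B,S)$, the continuity equation to kill the divergence, the momentum equation to reconstruct the pressure gradient, and uniqueness of the incompressible system to upgrade subsequential convergence. The paper, however, does not carry out the pressure step explicitly; it delegates it to Klainerman--Majda \cite[Theorem 1]{Klainerman1981limit}, and it is precisely at the point where you try to supply the details that a gap appears.

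The gap is your claim that $\|\p_t p^\eps\|_{H^3}$ is uniformly bounded (``the same unweighted $\p_t$-bounds''), which you then use to bound $\p_t\nab P^\eps$ in $L^\infty([0,T];H^2)$ and derive the H\"older-in-time equicontinuity of $\nab P^\eps$ for the second Arzel\`a--Ascoli step. This bound is not actually delivered by the uniform estimates. Although the displayed formula \eqref{energy intro} literally assigns no $\eps$-weight to $\p_t p$ when $k=1$, $l=0$, the energy that the proof controls is the one listed in Section \ref{sect tg list}, namely $\|\eps\p_t p\|_3$, $\|\eps^2\p_t^2 p\|_2$, etc. (the paper itself says in Section \ref{sect limit} that only ``$\|\eps\p_{t,x}p\|_3$'' is uniformly bounded, and the weight-removal trick in the ``Discussion about the weights of $p$'' is only available when a spatial tangential derivative is present, i.e.\ $l\ge1$, $\alpha_0<2l$). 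One can see directly that the unweighted bound cannot hold: $\p_t\nab p=-\p_t(\rho D_t u)-\p_t\bigl(B\times(\nab\times B)\bigr)$ contains $\rho\,\p_t^2u$, and only $\|\eps\p_t^2u\|_2$ is uniformly controlled. For the same reason one cannot get strong convergence of $\p_t u^\eps$ by a naive equicontinuity argument. So your two-step Arzel\`a--Ascoli derivation of $\nab P^\eps\to L$ strongly in $C([0,T];H^{3-\delta})$ does not close as written; this is exactly the delicate point for which the paper invokes Klainerman--Majda, where the strong convergence of the pressure gradient in the well-prepared regime is obtained by a finer argument (identifying $\nab\pi$ through the Leray decomposition of the limit momentum equation and estimating the difference, rather than by differentiating the momentum equation once more in time). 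Everything else in your proposal — the divergence-free limit, the identification of $\varrho$ and its transport equation, the recovery of $L$ as an exact gradient on $\T^2\times(-1,1)$, and the uniqueness upgrade — is correct and consistent with the paper.
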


		\begin{rmk}[The space for the convergence of initial data]
		The compressible initial data converges to the incompressible data in $H^4(\Om)$ instead of $H_*^8(\Om)$ because the higher-order energy $E_5(0)\sim E_8(0)$ automatically vanishes as $\eps\to 0$. Thus, it suffices to require the convergence in $H^4(\Om)$.
		\end{rmk}	

		\begin{rmk}[Boundedness of the domain]
		In the proof of the main theorem, we do not use the boundedness of the domain, such as Poincar\'e's inequality, to close the energy estimate. Thus, after making necessary modifications (including setting $\rho_0-1\in H_*^8(\Om)$ instead of $\rho_0$ itself and assuming the initial data to be ``localized"), our proof is still valid for the case of unbounded reference domains such as $\R^{d-1}\times(-1,1)$ and the half space. Indeed, using a partition of unity as in \cite{CS2007LWP}, the proof can be directly generalized to the domains which are diffeomorphic to the reference domains via $H_*^8(\Om)$-diffeomorphisms.
		\end{rmk}

			\subsection{Organization of the paper}\label{sect org} 
This paper is organized as follows. In section \ref{sect overview intro}, we discuss the main difficulties and briefly introduce our strategies to tackle the problem. Then section \ref{sect uniform} is devoted to the proof of uniform estimates in Mach number. Combined with the compactness argument, we conclude the incompressible limit in section \ref{sect limit}. In Appendix \ref{apdx LWP}, we prove the local existence of \eqref{CMHD3} for initial data in $H_*^8(\Om)$.

		\paragraph*{Acknowledgment.} The research of Jiawei Wang is supported by the National Natural Science Foundation of China (Grants 12131007 and 12071044) and the Basic Science Center Program (No: 12288201) of the National Natural Science Foundation of China. Jiawei Wang would like to thank his Ph.D. advisor Prof. Qiangchang Ju for helpful suggestions. Junyan Zhang would like to thank Chenyun Luo for the kind hospitality during his visit at The Chinese University of Hong Kong.

		\subsection*{List of Notations}
		\begin{itemize}
			\item (Tangential derivatives) $\TT_0= \p_t$ denotes the time derivative, $\TT_j=\TP_j~(1\leq j\leq d-1)$ denotes the tangential spatial derivatives and $\TT_{d+1}:=\omega(x_d)\p_d$ with $\omega(x_d)=(1-x_d)(1+x_d)$.
			\item (Tangential components) $\bar{u}:=(u_1,\cdots,u_{d-1})^\mathrm{T}$ and $\bar{B}:=(B_1,\cdots,B_{d-1})^\mathrm{T}$. We also write $\cnab:=(\TP_1,\cdots,\TP_{d-1})^\mathrm{T}$ as the tangential components of the operator $\nab$.
			\item ($L^\infty$-norm) $\|\cdot\|_{\infty}:= \|\cdot\|_{L^\infty(\Omega)}$, {$|\cdot|_{\infty}:=\|\cdot\|_{L^\infty(\Sigma)}$}. 
			\item (Interior Sobolev norm) $\|\cdot\|_{s}$:  We denote $\|f\|_{s}:= \|f(t,\cdot)\|_{H^s(\Omega)}$ for any function $f(t,y)\text{ on }[0,T]\times\Omega$.
			\item  (Boundary Sobolev norm) $|\cdot|_{s}$:  We denote $|f|_{s}:= |f(t,\cdot)|_{H^s(\Sigma)}$ for any function $f(t,y)\text{ on }[0,T]\times\Sigma$.
			\item (Anisotropic Sobolev norms) $\|\cdot\|_{m,*}$: For any function $f(t,x)\text{ on }[0,T]\times\Omega$, $\|f\|_{m,*}^2:= \sum_{\lee\alpha \ree\leq m}\|\p_*^\alpha f(t,\cdot)\|_{0}^2$ denotes the $m$-th order space-time anisotropic Sobolev norm of $f$.
			\item (Polynomials) $P(\cdots)$ denotes a generic polynomial in its arguments.
			\item (Commutators) $[T,f]g=T(fg)-f(Tg)$, $[f,T]g=-[T,f]g$, $[T,f,g]:=T(fg)-T(f)g-fT(g)$ where $T$ is a differential operator and $f,g$ are functions.
			\item (Equality modulo lower order terms) $A\lleq B$ means $A=B$ modulo lower order terms. 
		\end{itemize}

		\section{Difficulties and strategies}\label{sect overview intro}
	
		Before going to the detailed proofs, we will discuss the main difficulties in this incompressible limit problem and briefly introduce our strategies to derive energy estimates that are uniform in the Mach number.  Note that our initial data is assumed to be well-prepared, so the uniform estimates together with compactness argument are enough to derive the incompressible limit.

		\subsection{Choice of function spaces}\label{sect stat 1}
		Compressible ideal MHD with perfectly conducting wall conditions is a first-order quasilinear hyperbolic system with \textit{characteristic} boundary conditions of constant multiplicity, for which there is a potential of normal derivative loss. In fact, inside a rigid wall with the slip boundary condition, Euler equations and elastodynamic equations with degenerate deformation tensor on the boundary (see \cite{Zhang2021elasto}) also have characteristic boundary conditions, but their vorticity can be controlled in the setting of standard Sobolev spaces because there are no other quantities involved in the pressure part. The loss of normal derivatives is then compensated via the control of vorticity and divergence. However, the vorticity estimates for compressible ideal MHD cannot be closed in the setting of standard Sobolev spaces, which was explicitly presented in \cite{Zhang2020CRMHD}. 

		On the one hand, people observed that such normal derivative loss can be compensated if the vertical component of the magnetic field does not vanish on the boundary. See Yanagisawa \cite{1987MHDfirst} for the well-posedness and \cite{MTT2018MHDCD,TW2021MHDCDST,WangXinMHDCD} for the study of corresponding free-boundary problems (MHD contact discontinuities). But the transversality of magnetic fields violates the perfectly conducting wall condition. On the other hand, Chen \cite{ChenSX} first introduced the anisotropic Sobolev spaces defined in \eqref{anisotropic1}-\eqref{anisotropic2} in the study of compressible gas dynamics inside a rigid wall. Under this setting, a normal derivative is considered as a second-order derivative, which exactly compensates the derivative loss. Detailed analysis for MHD in such setting is referred to the second author's paper \cite[Sect. 2.5]{Zhang2021CMHD}. Using such function spaces, Yanagisawa-Matsumura \cite{1991MHDfirst} and Secchi \cite{Secchi1995, Secchi1996} proved the local well-posedness and see also \cite{Trakhinin2008CMHDVS,TW2020MHDLWP,Zhang2021CMHD} for the study of free-boundary problems, but the estimates obtained in these previous works are not uniform in Mach number.

		As stated in Section \ref{sect history}, the essential difficulty in proving the incompressible limit is that the function spaces for well-posedness of compressible problem are not ``compatible" with the ones for the incompressible problems. Since our initial data is well-prepared, that is, a given incompressible data plus a slight perturbation, we shall still start with the incompressible counterpart and try to find out the relationships between the incompressible problem and the compressible problem.

		\subsection{Key observation: hidden structure of Lorentz force in the vorticity analysis}\label{sect stat 2}

		First, the entropy is easy to control thanks to $D_t S=0$, so it suffices to analyze the relations between $(u,B)$ and $p$. We start with the control of $\|(u,B)\|_4$. Using div-curl decomposition, the $H^4$ Sobolev norms are bounded by $\|\nab\times u,\nab\times B\|_3$, $\|\nab\cdot u,\nab\cdot B\|_3$ and the normal traces $|u_3,B_3|_{3.5}$. The boundary conditions \eqref{bdry cond} eliminate the normal traces, and the divergence part is reduced to tangential derivatives $\|\eps^2 D_t p\|_3$  (here we temporarily abuse the notations to write $\ffp=\eps^2$) thanks to the continuity equation and the divergence constraint $\nab\cdot B=0$. As for the vorticity, taking curl in the momentum equation yields the evolution equation of vorticity
\begin{equation}
\rho D_t(\nab\times u) - (B\cdot\nab)(\nab\times B) = \cdots,
\end{equation}where we find that the term $\nab (p+\frac12|B|^2)$ is already eliminated. In the $H^3$-control of $\nab\times u$, we invoke the evolution equation of $B$ to get
\begin{equation}\label{bad1}
\ddt\io \rho|\p^3(\nab\times u)|^2+|\p^3(\nab\times B)|^2\dx= -\io \p^3\nab\times(B(\nab\cdot u))\cdot\p^3(\nab\times B)\dx+ \text{ controllable terms},
\end{equation}where we find that there are 5 derivatives falling on $u$ and thus the vorticity estimates cannot be closed in the setting of 4-th order standard Sobolev spaces. Such derivative loss in the vorticity analysis must appear because taking curl operator eliminates the term $\nab (\frac12|B|^2)$ in the momentum equation but does not eliminate the term $B(\nab\cdot u)$. However, in the proof of $L^2$ estimate, the contribution of $\nab (\frac12|B|^2)$ should be cancelled with the contribution of $B(\nab\cdot u)$.

If we further analyze this problematic term by using the structure of MHD system, we will find that the anisotropic Sobolev space is naturally introduced in order to close the energy estimates. Using the continuity equation $\eps^2 D_t p=-\nab\cdot u$, we find that the highest order term in \eqref{bad1} is $\eps^2 B\times (\p^3\nab D_tp)$. Then commuting $\nab$ with $D_t$ and invoking the momentum equation $\rho D_tu+B\times(\nab\times B)=-\nab p$, we get
\begin{equation}\label{key}
\eps^2 B\times (\p^3\nab D_tp)\lleq\eps^2 B\times (\p^3 D_t\nab p)\lleq -\eps^2\rho B\times(\p^3D_t^2 u)-\eps^2 B\times  D_t(B\times\p^3(\nab\times B)).
\end{equation} On the right side of the above identity, the second term still contains 4 normal derivatives $\p^3 \nab\times$ and 1 tangential derivative $D_t$ which exhibits a loss of derivative. The key observation here is that the contribution of this term in \eqref{bad1} gives an energy term and thus avoid the loss of derivative. Specifically, using $\nab\times(B(\nab\cdot u))=(\nab\times B)(\nab\cdot u)-B\times\nab(\nab\cdot u)$ and omitting the contribution of $(\nab\times B)(\nab\cdot u)$ in \eqref{bad1} (this part has no loss of derivative), we have
\begin{equation}\label{bad0}
\begin{aligned}
&-\io \p^3\nab\times(B(\nab\cdot u))\cdot\p^3(\nab\times B)\dx\lleq \io B\times\p^3\nab\underbrace{(\nab\cdot u)}_{=-\eps^2 D_t p}\cdot\p^3(\nab\times B)\dx\\
\lleq&\io\eps^2 \left(\rho B\times(\p^3D_t^2 u)\right)\cdot\p^3(\nab\times B)\dx+\io \eps^2 \left(B\times D_t\left(B\times\p^3 (\nab\times B)\right)\right)\cdot\p^3(\nab\times B)\dx,
\end{aligned}
\end{equation}where we insert \eqref{key} in the second equality. Then setting $\bd{u}=B$, $\bd{v}=D_t\left(B\times\p^3 (\nab\times B)\right)$ and $\bd{w}=\p^3(\nab\times B)$ in the vector identity $(\bd{u}\times \bd{v})\cdot\bd{w}=-(\bd{u}\times \bd{w})\cdot\bd{v}$, we find that the last term in \eqref{bad0} contributes to an energy term
\begin{equation}\label{bad3}
\begin{aligned}
&\io \eps^2 \left(B\times D_t\left(B\times\p^3(\nab\times B)\right)\right)\cdot\p^3(\nab\times B)\dx\\
=&-\io\eps^2\left(B\times\p^3(\nab\times B)\right)\cdot D_t\left(B\times\p^3 (\nab\times B)\right)\dx\\
=&-\frac12\ddt\io\eps^2\left|B\times\p^3(\nab\times B)\right|^2\dx+ \text{ controllable terms}.
\end{aligned}
\end{equation}

The above analysis for compressible ideal MHD shows that the vorticity estimates cannot be closed in standard Sobolev spaces, but in fact we \textit{trade one normal derivative} (in $\nab\times$) \textit{for two tangential derivatives} $\eps^2D_t^2$ thanks to the special structure of Lorentz force $B\times(\nab\times B)$ whose contribution in the vorticity analysis is an $\eps$-weighted energy term instead of introducing loss of derivatives. This exactly illustrates why the anisotropic Sobolev space is naturally introduced to study compressible ideal MHD. 

It should also be noted that this difficulty never occurs for incompressible ideal MHD because the divergence-free condition automatically eliminates the bad term above. Indeed, taking the limit $\eps\to 0_+$, the right side of \eqref{bad1} automatically vanishes and the vorticity estimates for the incompressible problem can be established in standard Sobolev spaces.  Besides, to prove the incompressible limit of ideal MHD with \eqref{bdry cond}, an extra Mach number weight $\eps^2$ must be added together with the two tangential derivatives. So, we find that the ``anisotropic part" of the energy, namely the terms $E_5\sim E_8$ in \eqref{energy intro}, will automatically vanish when we take the incompressible limit $\eps\to 0_+$. The remaining term $E_4(t)$ exactly consists of the standard Sobolev norms, which coincides with the energy for incompressible ideal MHD.

\begin{rmk}[Slight modifications in 2D]
When dimension $d=2$, we have to replace the curl operator $\nab\times$ by $\nab^\perp\cdot$ where $\nab^\perp:=(-\p_2,\p_1)$. Then \eqref{bad1} reads 
\begin{equation}\label{bad2}
\ddt\io \rho|\p^3(\nab^\perp\cdot u)|^2+|\p^3(\nab^\perp\cdot B)|^2\dx= -\io \left((B\cdot\nab^\perp)\p^3\nab\cdot u\right)(\p^3\nab^\perp\cdot B)\dx+ \text{ controllable terms},
\end{equation}where the highest order term can be reduced as follows after invoking the first two equations of \eqref{CMHD3}
\[
\begin{aligned}
-(B\cdot\nab^\perp)\p^3\nab\cdot u\lleq&~ \eps^2B\cdot \p^3D_t \nab^\perp p\lleq \eps^2(B_2\p^3D_t\p_1 p-B_1\p^3D_t\p_2 p)\\
\lleq &~-\eps^2\rho(B_2\p^3D_t^2u_1-B_1\p^3D_t^2u_2)-\eps^2(B_2^2+B_1^2)D_t\p^3(\nab^\perp \cdot B),
\end{aligned}
\]where we note that the momentum equations can be written as $\p_1 p=-\rho D_tu_1-B_2(\nab^\perp\cdot B)$ and $\p_2 p=-\rho D_tu_2+B_1(\nab^\perp\cdot B)$. The first term can be treated as in the 3D case, and the contribution of the second term gives an energy term $-\frac12\ddt\io \eps^2|B|^2|\p^3(\nab^\perp\cdot B)|^2\dx$ which also automatically vanishes when we take the incompressible limit $\eps\to 0_+$.
\end{rmk}

It should be emphasized that our analysis is very different from the previous works about well-posedness \cite{1991MHDfirst, Secchi1995, Zhang2021CMHD}. Indeed, these previous works \cite{1991MHDfirst, Secchi1995, Zhang2021CMHD} avoided using div-curl analysis to reduce the normal derivatives of $u$ and $B$. Instead, they controlled the normal derivatives by directly computing the corresponding $L^2$-type estimates which yields non-zero boundary integrals such as $-\int_{\Sigma} \p_3^4 P~\p_3^4v_3\dx'$ and only gives the estimates for $(u,B,\eps p)$, not $(u,B,p)$. Since taking normal derivatives does not preserve the boundary conditions, one has to use the MHD equations to replace $\p_3 P$ and $\p_3 u_3$ by tangential derivatives of the other variables, and then uses Gauss-Green formula to rewrite the boundary integrals into the interior. During this process, more time derivatives might be produced without extra weights of Mach number (e.g., the product of the underlined terms in $-\nab P\to \underline{D_t v}-(\bar{B}\cdot\cnab) B$ and $\p_3v_3\to-\eps^2 D_t p-\underline{\cnab\cdot\bar{v}}$) . However, the continuity equation indicates that one may have to add more Mach number weights to higher-order time derivatives. Thus, following the methods in \cite{1991MHDfirst, Secchi1995, Zhang2021CMHD} might cause a potential of loss of Mach number weight, and one can find related details in \cite[Sect. 2.5]{Zhang2021CMHD}. In other words, these previous works \cite{1991MHDfirst, Secchi1995, Zhang2021CMHD} only revealed that one should trade one normal derivative for two derivatives but ignored the necessity of adding extra weight of Mach number $\eps^2$. One of the advantanges of our method is that all normal derivatives are reduced via div-curl analysis which does not involve any boundary estimates; and every time when we reduce a normal derivative, we never lose weights of Mach number because the continuity equation must be used in the reduction.

\subsection{Reduction of pressure and design of energy functional}\label{sect stat 3}

The main idea of proving the uniform-in-$\eps$ estimates is to repeatedly reduce normal derivatives to tangential derivatives until all derivatives are tangential (with suitable Mach number weights); and reduce spatial derivatives of $p$ to tangential derivatives of $u,B$ until all derivatives on $p$ are time derivatives. Once we only have tangential derivatives, it suffices to mimic the $L^2$ estimates to close the tangential estimates because taking tangential derivatives preserves the boundary conditions. 

From the definition of $D_t$ and $u_3|_{\Sigma}=B_3|_{\Sigma}=0$, we know both $D_t$ and $B\cdot\nab$ are tangential derivatives. Based on this fact and the analysis in section \ref{sect stat 2}, we have the following reductions:
\begin{itemize}
\item [a.] Vorticity: $\nab\times(u,B)\to \eps^2\TT^2 u$.
\item [b.] Divergence: $\nab\cdot(u,B)\to\eps^2\TT p$.
\item [c.] Reduction of pressure: The momentum equation reads $-\nabla (p+|B|^2/2)=\rho D_t u -B\cdot \nab B$ gives the relation $\nabla P\to \TT(u,B)$ and thus $\nab p\to \TT(u,B)$ plus $\nab B$ where $\nab B$ is further reduced by using (a) and (b). This indicates that $\p_t^k\nab p$ should have the same Mach number weight as $\p_t^{k+1}(u,B,S)$.
\item [d.] Tangential estimates: When estimating $E_{4+l}(t)$ (defined in \eqref{energy intro}), $\TT^{\alpha}(u,B)$ is controlled together with $\eps\TT^{\alpha}p$ in the estimates of full tangential derivatives, i.e., when $\lee\alpha\ree=4+l$.
\end{itemize} 

In the above subsection we start with $E_4$ and reduce $\|u,B\|_4$ (part of $E_4$) to $\|\eps^2\TT p\|_{3}$ (still a part of $E_4$) and $\|\eps^2\TT^2 u\|_3$ (part of $E_5$) via the div-curl analysis. The divergence part introduces time derivative, but the number of derivatives does not exceed 4, so we can repeatedly use (c) to reduce the spatial derivatives of $p$ to tangential derivatives of $u,B$ until there is no spatial derivative falling on $p$. Finally, it suffices to control 4-th order time derivatives of $u,B$ and $p$ with suitable Mach number weights. The Mach number weights of these quantities are determined by the relation (d) and the ``preparedness" of initial data. See also diagram 2 below.

For the control of terms that have 5 derivatives, for example the term $\|\eps^2\TT^2 u\|_3$, we repeat the div-curl analysis to get $\|\eps^4\TT^3 p\|_{2}$ (still a part of $E_5$) from the divergence part and $\|\eps^4\TT^4 u\|_{2}$ (part of $E_6$) from the curl part. Again, the divergence part is reduced to $E_5(t)$ which is controlled in a similar way as $E_4(t)$; and then we do further div-curl decomposition for $\|\eps^4\TT^4 u\|_{2}$ which then requires the control of $E_7(t)$. Repeatedly, we finally need to control the $L^2$ norms of $\eps^8\TT^8(u,B)$, which is controlled together with $\eps^8\TT^8 p$ when $\TT^8$ are not purely time derivatives, and $\eps^9\p_t^8 p$ when $\TT^8$ are purely time derivatives. Then the control of $\TT^8$ is completely parallel to the proof of $L^2$ energy conservation because there isn't any normal derivative.

We present the above complicated reduction procedure in the following diagrams. For more details of the reduction procedures, we refer to section \ref{sect tg list}. We also list an example of reduction of $E_{4+l}(t)$ by repeatedly using (c) and (d) when $l=0$. The other cases follow in the same manner.

{\small\begin{equation}\label{diagram1}
\begin{tikzcd}
\|(u,B)\|_4 \arrow[d, "~~", "\text{div}" ] \arrow[r, "~~", "\text{curl}" ]&\|\eps^2\TT^2(u,B)\|_3 \arrow[d, "~~", "\text{div}" ]\arrow[r, "~~", "\text{curl}" ]&\|\eps^4\TT^4(u,B)\|_2 \arrow[d, "~~", "\text{div}" ]\arrow[r, "~~", "\text{curl}" ]&\|\eps^6\TT^6(u,B)\|_1 \arrow[d, "~~", "\text{div}" ]\arrow[r, "~~", "\text{curl}" ]&\|\eps^8\TT^8(u,B)\|_0\arrow[dd, "~~", "\p_t\text{ appears}" ] \\
\|\eps^2\TT p\|_3\arrow[d, "~~", "\p_t\text{ appears}" ] &\|\eps^4\TT^3 p\|_2\arrow[d, "~~", "\p_t\text{ appears}" ] &\|\eps^6\TT^5 p\|_1\arrow[d, "~~", "\p_t\text{ appears}" ] &\|\eps^8\TT^7 p\|_0\arrow[d, "~~", "\p_t\text{ appears}" ] \\
\text{reduction of }E_4\arrow[r, "~~", "~~" ]\arrow[rrd, "~~", "\text{tangential}" ]&\text{reduction of }E_5\arrow[r, "~~", "~~" ]\arrow[rd, "~~", "\text{tangential}" ]&\text{reduction of }E_6\arrow[r, "~~", "~~" ]\arrow[d, "~~", "\text{tangential}" ]&\text{reduction of }E_7\arrow[r, "~~", "~~" ]\arrow[ld, "~~", "\text{tangential}" ]&\text{reduction of }E_8\arrow[lld, "~~", "\text{tangential}" ]\\
&&\text{closed}&&
\end{tikzcd}
\end{equation}
\begin{center}
Diagram \eqref{diagram1}: From standard Sobolev space to anisotropic Sobolev space.
\end{center}}

{\small\begin{equation}\label{diagram2}
\begin{tikzcd}
&&E_5(t)&&\\
\|(u,B)\|_4\arrow[urr, "~~", "\text{curl}" ] &\|\p_t(u,B)\|_3\arrow[ur, "~~", "\text{curl}" ]&\|\eps\p_t^2(u,B)\|_2  \arrow[d, "~~", "\text{(d)}" ]\arrow[u, "~~", "\text{curl}" ]&\|\eps^2\p_t^3(u,B)\|_1 \arrow[d, "~~", "\text{(d)}" ]\arrow[ul, "~~", "\text{curl}" ]&\|\eps^3\p_t^4(u,B)\|_0\arrow[d, "~~", "\text{(d)}" ] &\\
\|p\|_4\arrow[ur, "~~","\text{(c)}" ]  & \|\eps\p_t p\|_3 \arrow[ur, "~~","\text{(c)}" ] & \|\eps^2\p_t^2 p\|_2\arrow[ur, "~~","\text{(c)}" ] & \|\eps^3\p_t^3 p\|_1 \arrow[ur, "~~","\text{(c)}" ]  & \|\eps^4\p_t^4 p\|_0\arrow[r, "~~","\text{tangential}" ] &\text{closed}
\end{tikzcd}
\end{equation}
\begin{center}
Diagram \eqref{diagram2}: Reduction of $E_4(t)$ via (c) and (d).
\end{center}}

Recall that the vorticity estimates for compressible Euler equations can be closed under the setting of standard Sobolev spaces. As presented in \cite{LuoZhang2022CWWST}, the reduction scheme for Euler equations is merely the control of $E_4(t)$ but does not involve any higher-order energies. In other words, the reduction scheme is finished within the first column of diagram \eqref{diagram1}, or equivalently diagram \eqref{diagram2} with $E_5(t)$ replaced by $E_4(t)$. Compared with Euler equations, the extra thing for compressible ideal MHD is that the vorticity analysis for $E_{4+l}(t)$ requires the control of $E_{4+l+1}(t)$ until there is no normal derivative on $u,B$. That's why we have the first row in diagram \eqref{diagram1}. Then, as presented in each column of diagram \eqref{diagram1}, we need to mimic the scheme in \cite{LuoZhang2022CWWST}, that is, run the scheme presented in diagram \eqref{diagram2}, for each $E_{4+l}(t)$ to close the energy estimates.

		\subsection{Further applications of our method}\label{sect stat 4} 
		 Based on the above analysis, we give a complete and definitive answer to the incompressible limit problem of ideal MHD under the perfectly conducting wall condition when the initial data is well-prepared. Besides, compared with the previous work \cite{JuWangMHDlimit} by Ju and the first author, we give a clear illustration on the ``incompatibility" of the function spaces of well-posedness and thoroughly resolve this issue. It should be noted that, in the uniform-in-$\eps$ estimates, the ``preparedness" of initial data is only used to guarantee $E(0)<\infty$. If the initial data is not ``prepared", that is, $\nab\cdot u_0=O(1)$ is not small and $\p_t u|_{t=0}=O(1/\eps)$ is unbounded, we believe that the uniform-in-$\eps$ estimates can be proven in a similar way after doing suitable technical modifications. Hence, one can expect to generalize our result to the case when the initial data is not ``prepared".

It should also be noted that the reduction procedure presented in diagram \eqref{diagram2} is parallel to the case of compressible water wave, which is presented in the paper \cite[Sect. 2.1.3]{LuoZhang2022CWWST} by Luo and the second author. Indeed, it is exactly the appearance of the magnetic field and the perfectly conducting wall condition that force us to do further vorticity analysis as stated in section \ref{sect stat 2} and presented in diagram \eqref{diagram1}. Therefore, one can expect to extend our results to free-boundary problems. In fact, we believe that our method together with the techniques about free-boundary problems presented in \cite{LuoZhang2022CWWST} can be generalized to study the current-vortex sheets in ideal compressible MHD, which will be presented in a forthcoming paper by the second author.

		\section{Uniform energy estimates}\label{sect uniform}
		\subsection{$L^2$ estimate}\label{sect L2}
		First, we establish $L^2$-energy estimate for \eqref{CMHD3}. Invoking the momentum equation and integrate by parts, we have:
		\begin{equation}
		\begin{aligned}
			\frac12\ddt\io \rho|u|^2\dx=&~\io(B\cdot\nab)B\cdot u\dx-\io u\cdot\nab p \dx-\io u \cdot\nab(\frac12|B|^2)\dx \\
		&+\io\frac12\p_t \rho |u|^2- \rho(u\cdot\nab)u\cdot u\dx\\
		=&-\io (B\cdot\nab)u\cdot B+\io p(\nab\cdot u)\dx +\frac12\io(\nab \cdot u) |B|^2\dx\\
		&+\io\underbrace{\frac12\p_t\rho|u|^2+\frac12\nab\cdot(\rho u) |u|^2}_{=0}\dx.
		\end{aligned}
		\end{equation} Then, inserting the continuity equation and the evolution equation of $B$, we get
		\begin{equation}
		\begin{aligned}
			\frac12\ddt\io \rho|u|^2\dx=&-\io \p_t B\cdot B -\io (u\cdot\nab) B\cdot B -\io |B|^2(\nab\cdot u)  +\frac12\io(\nab \cdot u) |B|^2\dx\\
		&-\io \ffp p\p_t p\dx-\io \ffp p (u\cdot\nab)p \dx\\
		=&-\frac12\ddt\io |B|^2+\ffp |p|^2 \dx+\frac12\io ((\nab\cdot u)  +(\p_t+u\cdot\nab)\ffp) |p|^2\dx.
		\end{aligned}
		\end{equation}  Similarly, we have $\frac12\ddt\io \rho S^2=0$ thanks to $D_t S=0$, so we get the $L^2$ estimate:		
	    \begin{align}\label{L2}
	    \ddt \io \rho|u|^2+|B|^2+\ffp |p|^2+\rho S^2\dx\le P(E_4(t)).
	    \end{align}

		\subsection{Reduction of normal derivatives}\label{sect reduce}
		\subsubsection{Reduction of pressure}\label{sect reduction q}
		We show how to reduce the control of the pressure to that of the velocity and magnetic field when there is at least one spatial derivative on $P$. This follows from using the momentum equation
		\begin{align}\label{mom 3}
			-\nab (p+|B|^2/2) =\rho D_t u -B\cdot\nab B .
		\end{align}
		Then
		\begin{equation} \label{Dq L2}
			\|\nab (p+|B|^2/2)\|_{0}\lesssim \|\rho\|_{\infty} \| D_t u\|_0 + \| B\cdot\nab B\|_0,
		\end{equation}
		where $D_t u = \p_t u +\bar{u}\cdot \cnab u+ u_3\p_3 u$ and $B\cdot \nab B=\bar{B}\cdot \cnab B+ B_3\p_3 B$. Similarly, using \eqref{mom 3}, we also have the reduction for $L^{\infty}$ norm
		\begin{align}\label{Dq Linf}
			\|\nab (p+|B|^2/2)\|_{\infty}\lesssim \|\rho\|_{\infty} \| D_t u\|_{\infty} + \| B\cdot\nab B\|_{\infty}.
		\end{align}
		Recall that $D_t = \p_t +\bar{u}\cdot \cnab + u_3\p_3 $ and $B\cdot \nab =\bar{B}\cdot \cnab + B_3\p_3 $ are both tangential derivatives thanks to $u_3=B_3=0$ on $\Sigma$. The above inequalities \eqref{Dq L2}-\eqref{Dq Linf} show that $\nab P$ is reduced to $\p_t u$, $\TP u$, $\omega(x_3)\p_3 u$, $\TP B$ and $\omega(x_3)\p_3 B$ for some weight function $\omega(x_3)$ vanishing on $\Sigma$. Thus, the momentum equation reduces a \textit{normal} derivative of $P$ to a \textit{tangential} derivative of $u,B$.
		
		Also, one can reduce the Sobolev norm of $\nab P$ in the same way. Let $\TT=\p_t$ or $\TP$ or $\omega(x_3)\p_3$ and $D=\p$ or $\p_t$.  The above estimate yields the control of $\|\TT^{\alpha}D^k \nab p\|_0$ after taking $\TT^{\alpha}D^k$, $k+\lee\alpha\ree\geq 1$, to \eqref{mom 3}. Specifically, at the leading order, $\|\TT^{\alpha}D^k \nab (p+|B|^2/2)\|_0$ is controlled by
		\begin{align} 
			\|\rho\|_{\infty} \| \TT^{\alpha}D^k \TT u\|_0 + \| \TT^{\alpha}D^k \TT B\|_0.
		\end{align}

		As for the fluid pressure $p$, by definition we have $\p_i p=\p_i P-\p_iB\cdot B$. Thus, to control the Sobolev norms of $\nab p$, we still need to do further analysis for $\nab B$, which will be controlled together with $\nab u$.
		
		
		\subsubsection{Div-Curl analysis}\label{sect div curl}

From the analysis of section \ref{sect reduction q}, in order to control the Sobolev norms defined in \eqref{energy intro}, we shall reduce the normal derivatives of $u$ and $B$ and then analyze the tangential derivatives. We use the following lemma  to reduce the normal derivatives of $u,B$ to their divergence and curl.

		\begin{lem}[Hodge elliptic estimates]\label{hodgeTT}
			For any sufficiently smooth vector field $X$ and any real number $s\geq 1$, one has
			\begin{equation}
				\|X\|_s^2\lesssim \|X\|_0^2+\|\nab\cdot X\|_{s-1}^2+\|\nab\times X\|_{s-1}^2 + |X\cdot N|^2_{s-\frac12}.
			\end{equation} 
		\end{lem}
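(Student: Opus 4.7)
The estimate is a classical div--curl (Hodge) inequality; the plan is to handle the case $s=1$ directly by a Bochner-type integration by parts, then lift to integer $s\ge 2$ by induction on the order, and finally pass to non-integer $s$ by standard Sobolev interpolation.

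For the base case $s=1$, I would start from the pointwise identity
\[
|\nab X|^2 = (\nab\cdot X)^2 + |\nab\times X|^2 + \p_j(X_k\p_k X_j) - \p_k(X_k\,\nab\cdot X),
\]
which follows in $d=3$ by expanding $|\nab\times X|^2=\epsilon_{ijk}\epsilon_{ilm}\p_j X_k\p_l X_m$ via the $\epsilon$--$\delta$ identity; the $d=2$ case is analogous with $\nab^\perp\cdot X$. Integrating over $\Om$ and applying the divergence theorem converts the last two terms into a boundary integral of the form $\int_\Sigma[(X\cdot\nab)X\cdot N-(\nab\cdot X)(X\cdot N)]\,dS$. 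Since $N=\pm e_d$ is constant on $\Sigma$ and $\Sigma$ has empty boundary, I would integrate by parts tangentially on $\Sigma$ to reduce the integrand to a pairing between $X\cdot N$ and first-order derivatives of $X$, then apply trace and duality to obtain
\[
\left|\int_\Sigma[(X\cdot\nab)X\cdot N-(\nab\cdot X)(X\cdot N)]\,dS\right|\lesssim |X\cdot N|_{1/2}\,\|X\|_1.
\]
Young's inequality absorbs the $\|X\|_1^2$ contribution into the left-hand side of $\|X\|_1^2\lesssim\|X\|_0^2+\|\nab X\|_0^2$, completing the $s=1$ case.

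For integer $s\ge 2$, I would apply the $s=1$ estimate to the tangentially-differentiated field $\cnab^{s-1}X$. Because purely tangential derivatives commute with $\nab\cdot$ and $\nab\times$, and $N$ is constant on $\Sigma$, this yields
\[
\|\cnab^{s-1}X\|_1^2\lesssim \|X\|_{s-1}^2+\|\nab\cdot X\|_{s-1}^2+\|\nab\times X\|_{s-1}^2+|X\cdot N|_{s-1/2}^2,
\]
and $\|X\|_{s-1}^2$ is absorbed via the induction hypothesis. This controls every mixed derivative $\p^\beta X$ with $|\beta|=s$ and $\beta_d\le 1$. The remaining derivatives with $\beta_d\ge 2$ are recovered inductively from the normal-derivative identities
\[
\p_d X_d=\nab\cdot X-\cnab\cdot\bar X,\qquad \p_d X_j-\p_j X_d=\pm(\nab\times X)_k\quad(j<d),
\]
each application of which trades a normal derivative of $X$ for a tangential derivative of $X$ together with one derivative of $\nab\cdot X$ or $\nab\times X$. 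Iterating $\beta_d$ times gives the full $H^s$ bound; for non-integer $s\in(k,k+1)$, Sobolev interpolation between the integer estimates extends the result to the entire range $s\ge 1$.

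The main obstacle I anticipate is the clean handling of the boundary integral in the base case: one must arrange the tangential integrations by parts on $\Sigma$ so that only $|X\cdot N|_{1/2}$ and interior norms remain, with no uncontrolled tangential trace of $X$ surviving. The flat geometry of $\Om=\T^{d-1}\times(-1,1)$, with constant normal $N=\pm e_d$ and $\p\Sigma=\emptyset$, makes this reduction essentially algebraic; on a curved boundary one would pick up additional curvature contributions from $\nab N$ that are harmless but require more bookkeeping, and these are entirely absent in our setting, which is why the lemma is stated without any curvature correction.
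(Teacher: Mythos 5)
The paper states Lemma \ref{hodgeTT} without proof --- it is invoked as a classical div--curl (Hodge-type) estimate --- so there is no in-paper argument to compare against; your proposal supplies the standard textbook proof, and it is essentially correct. The base-case identity is right (indeed $|\nab X|^2=|\nab\times X|^2+\p_jX_k\p_kX_j$ by the $\epsilon$--$\delta$ computation, and the remaining manipulation produces exactly the two divergence terms you wrote), the boundary term on the flat slab does reduce, after substituting $\p_dX_d=\nab\cdot X-\cnab\cdot\bar X$ and integrating by parts on $\T^{d-1}$, to a pairing controlled by $|X\cdot N|_{1/2}\|X\|_1$, and the induction that trades normal derivatives for tangential ones via $\p_dX_d=\nab\cdot X-\cnab\cdot\bar X$ and $\p_dX_j-\p_jX_d=\pm(\nab\times X)_k$ is the standard route and closes correctly.

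The only step stated too casually is the passage to non-integer $s$: one cannot in general interpolate an inequality of the form $A_s\lesssim B_s+C_s+D_s$ term by term between integer values of $s$. To make this rigorous you should either interpolate the solution operator of the div--curl boundary value problem (i.e.\ the bounded map $(f,g,h)\mapsto X$ with $\nab\cdot X=f$, $\nab\times X=g$, $X\cdot N|_\Sigma=h$, modulo the finite-dimensional space of harmonic fields), or simply invoke Agmon--Douglis--Nirenberg elliptic regularity for the first-order elliptic system, which gives all real $s\ge 1$ directly. In the present paper this is moot, since the lemma is only ever applied with integer $s$, but if you want the statement as written you need one of these two fixes rather than bare Sobolev interpolation.
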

		We start from the control of $E_4(t)$. We apply Lemma \ref{hodgeTT} to $\|\eps^{(k-1)_+}\p_t^k (u,B)\|_{4-k}$ for $0\leq k \leq 3$ to get
		\begin{align}
			\|\eps^{(k-1)_+}\p_t^k u\|_{4-k}^2\lesssim & \|\eps^{(k-1)_+}\p_t^k u\|_0^2 + \|\eps^{(k-1)_+}\nab\times\p_t^k u\|_{3-k}^2 + \|\eps^{(k-1)_+}\nab\cdot \p_t^k u\|_{3-k}^2, \\
			\|\eps^{(k-1)_+}\p_t^k B\|_{4-k}^2\lesssim & \|\eps^{(k-1)_+}\p_t^k B\|_0^2 + \|\eps^{(k-1)_+}\nab\times\p_t^k B\|_{3-k}^2,
		\end{align} where we already use $\nab \cdot B=0$ and the boundary condition $u_3=B_3=0$ on $\Sigma$.

		The $L^2$ norm of $u$ and $B$ has been controlled in \eqref{L2}, while the control of $\|\eps^{(k-1)_+}\p_t^k (u,B)\|_0~(1\leq k\leq 3)$ is parallel to the case of $k=0$ and will be postponed to later sections. The divergence part is reduced to the estimates of $p$ by using the continuity equation and \eqref{ff'}
		\begin{equation}\label{divv}
			\|\eps^{(k-1)_+}\nab\cdot \p_t^k u\|_{3-k}^2\lesssim\left\|\eps^{(k-1)_+ +2} \p_t^k( D_t p )\right\|_{3-k}^2,
		\end{equation}which will be further reduced to the tangenital estimates of $(u,B)$ by using the argument in Section \ref{sect reduction q}. 

		According to the reduction procedure in section \ref{sect reduction q} and (b)-(d) in section \ref{sect stat 3}, the control of $E_4(t)$ will be reduced to the control of $\TT^{\alpha}(u,B)$ and $\p_t^4 p$ with suitable Mach number weights, plus the curl part. Next, we analyze the curl part via the evolution equation of vorticity. 
		
		\begin{lem}\label{curl H4} It holds that
			\begin{align}
				\sum_{k=0}^{3}\ddt \left(\left\|\eps^{(k-1)_+}\nab\times\p_t^k u\right\|_{3-k}^2 +\left\|\eps^{(k-1)_+}\nab\times\p_t^k B\right\|_{3-k}^2+\left\|\eps^{(k-1)_+}\ffp^{\frac12}B\times(\nab\times\p_t^k B)\right\|_{3-k}^2\right) \le P(E_4(t), E_5(t)).
			\end{align}
		\end{lem}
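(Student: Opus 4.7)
The plan is, for each fixed $k\in\{0,1,2,3\}$, to derive a differential identity by applying $\nab\times$ to the momentum and magnetic-field equations, then differentiating by $\p_t^k\p^{3-k}$ and pairing in $L^2(\Om)$. Writing $\omega_u:=\nab\times u$ and $\omega_B:=\nab\times B$, the curl of the momentum equation yields $\rho D_t\omega_u = (B\cdot\nab)\omega_B + \RR_1$, with $\RR_1$ collecting the commutators $[\nab\times,\rho D_t]u$ and $[\nab\times,B\cdot\nab]B$ (and the fact that $\nab\times\nab P=0$), while the curl of the $B$-equation gives $D_t\omega_B = (B\cdot\nab)\omega_u - \nab\times(B(\nab\cdot u)) + \RR_2$. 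After applying $\eps^{(k-1)_+}\p_t^k\p^{3-k}$ and pairing the two identities against $\eps^{(k-1)_+}\p_t^k\p^{3-k}\omega_u$ and $\eps^{(k-1)_+}\p_t^k\p^{3-k}\omega_B$ respectively, the two $(B\cdot\nab)$-terms cancel after integration by parts since $B\cdot N|_\Sigma = 0$. The only dangerous surviving contribution is
\[
\mathcal{I}_k := -\eps^{2(k-1)_+}\io \p_t^k\p^{3-k}\nab\times\bigl(B(\nab\cdot u)\bigr)\cdot \p_t^k\p^{3-k}\omega_B\dx.
\]

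To handle $\mathcal{I}_k$, I would use $\nab\times(B(\nab\cdot u)) = (\nab\times B)(\nab\cdot u) - B\times\nab(\nab\cdot u)$ and absorb the first, harmless summand into $P(E_4(t))$. For the second summand, substituting the continuity equation $\nab\cdot u = -\ffp D_t p$ and then the momentum equation $\nab p \lleq -\rho D_t u - B\times\omega_B$ produces, modulo commutators,
\[
\nab(\nab\cdot u) \lleq -\ffp D_t\nab p \lleq \ffp\, D_t\bigl(\rho D_t u + B\times \omega_B\bigr).
\]
Inserting this into $\mathcal{I}_k$ splits it into two pieces. The first, proportional to $\eps^{2(k-1)_+}\ffp\rho\, B\times \p_t^{k+2}\p^{3-k}u$, exhibits the weight $\eps^{2(k-1)_+ +2}$ on a quantity of tangential type $\TT^{\alpha}\p_t^k u$ with $\alpha_0 = 2$, $\lee\alpha\ree = 2$, at $H^{3-k}$ regularity, which is exactly the structure controlled by $E_5(t)$. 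The second, proportional to $\eps^{2(k-1)_+}\ffp\,B\times D_t(B\times \p_t^k\p^{3-k}\omega_B)$, is rewritten via the identity $(B\times V)\cdot W = -(B\times W)\cdot V$ with $W = \p_t^k\p^{3-k}\omega_B$ to give $-\ffp\,\eps^{2(k-1)_+}(B\times\p_t^k\p^{3-k}\omega_B)\cdot D_t(B\times \p_t^k\p^{3-k}\omega_B)$; an integration by parts in space-time then produces $-\tfrac{1}{2}\ddt\io \eps^{2(k-1)_+}\ffp\,|B\times \p_t^k\p^{3-k}\omega_B|^2\dx$ plus error terms carrying $D_t\ffp$ or $\nab\cdot u$. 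This is exactly the third energy contribution in the lemma, which is absorbed to the left-hand side.

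The remainders $\RR_1,\RR_2$, the commutators generated when $\p_t^k\p^{3-k}$ passes through $\rho D_t$, $B\cdot\nab$, and $\nab\times$, and the correction terms produced by the substitutions above, all have at most four derivatives on any single factor and are therefore bounded by $P(E_4(t),E_5(t))$ via the Sobolev embedding $H^2(\Om)\hookrightarrow L^\infty(\Om)$, placing two factors in $L^\infty$ and the remaining ones in $L^2$ with the correct Mach-number weights. The main obstacle is the careful tracking of the $\eps$-weights: in the case $k=0$ the prefactor $\eps^{(k-1)_+}$ is trivial, so the $\ffp\sim\eps^2$ harvested from the continuity equation cannot be wasted and must land exactly on the second extra time derivative (to match the weight in $E_5(t)$) and on the squared energy integrand (to produce the $\sqrt{\ffp}$ inside the third term of the lemma). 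This bookkeeping is the quantitative realization of the heuristic in Section \ref{sect stat 2} that one trades one normal derivative for two tangential derivatives accompanied by a factor $\eps^2$.
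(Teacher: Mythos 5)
Your proposal follows essentially the same route as the paper's proof: curl the momentum equation, cancel the $(B\cdot\nab)$ cross terms by integration by parts using $B_3|_{\Sigma}=0$, isolate the dangerous $B\times\nab(\nab\cdot u)$ contribution, convert it via the continuity and momentum equations into $\ffp\,D_t(\rho D_t u + B\times\omega_B)$, and use the identity $(B\times V)\cdot W=-(B\times W)\cdot V$ to turn the second piece into the $\eps$-weighted energy term while the first lands in $E_5(t)$. The only cosmetic difference is that you curl the $B$-equation separately and symmetrize, whereas the paper substitutes $B\cdot\nab u = D_tB + B(\nab\cdot u)$ after integrating by parts, and your $\p_t^{k+2}$ should strictly be $D_t^2\p_t^k$ (still a tangential operator of anisotropic order $2$, so the identification with $E_5(t)$ stands).
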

		\begin{proof} First, we take $\nab\times$ in the momentum equation $\rho D_t u- (B\cdot\nab) B=-\nab P  $ to get the evolution equation of vorticity
		\begin{align}\label{curlv eq}
		\rho D_t  (\nab\times u) - (B\cdot\nab) \nab\times B= \rho [D_t,  \nab\times ]u - [B\cdot\nab,\nab\times]B - (\nab\rho)\times (D_t u),
		\end{align}where we notice that the right side only contains the first-order derivatives and does not lose Mach number weight. Note that the equation of state is smooth, so $\p_S \rho$ is bounded.
		\[
		[D_t,\p_i](\cdot)=-\p_iu_j\p_j(\cdot),~~[B\cdot\nab,\p_i](\cdot)=-\p_i B_k\p_k(\cdot),~~\nab\rho=\frac{\p\rho}{\p p}\nab p+\p_S \rho\nab S=O(\eps^2)\nab p+\p_S\rho \nab S.
		\]

		We first prove the case when $k=0$, and the cases for $1\leq k\leq 3$ follow in the same manner. Considering the structure of the evolution equation \eqref{curlv eq} and $\rho$ is bounded below by a positive constant, it suffices to prove that
\[
\frac12\ddt\io\rho\left|\p^3\nab\times u\right|^2+\left|\p^3\nab\times B\right|^2+\left|\ffp^{\frac12} B\times(\p^3\nab\times B)\right|^2\dx\leq P(E_4(t),E_5(t)).
\]

We take $\p^3$ in \eqref{curlv eq} to get 
		\begin{align}\label{curlv H3 eq}
		\rho D_t  \p^3(\nab\times u) - (B\cdot\nab) \p^3\nab\times B=   \underbrace{\p^3(\text{RHS of }\eqref{curlv eq})+[\rho D_t,\p^3](\nab\times u) - [B\cdot\nab,\p^3](\nab\times B)}_{\RR_1},
		\end{align}where the order of derivatives on the right side must be $\leq 4$. Now, standard $L^2$-type estimate yields that
		\begin{align}
			&\frac{1}{2}\ddt \int_{\Omega} \rho|\p^3\nab\times u|^2 \dx=\int_{\Omega} \rho(\p_t \p^3 \nab\times u  )\cdot \p^3 \nab\times u \dx + \frac{1}{2}\int_{\Om} \p_t \rho |\p^3\nab\times u|^2 \dx \notag\\
			=&\int_{\Om}\rho D_t \p^3 \nab\times u \cdot\p^3\nab\times u \dx \notag\\
			&\underbrace{- \int_{\Om}\rho(u\cdot\nab) \p^3\nab\times u\cdot \p^3\nab \times u \dx - \frac{1}{2}\int_{\Om}(\frac{\p\rho}{\p p}\p_t p +\p_S\rho \p_t S)|\p^3\nab\times u|^2 \dx}_{:=\mathcal{I}_1}.
\end{align}

		Then invoking \eqref{curlv H3 eq} gives us the following terms
		\begin{align}
			&\int_{\Om}(\rho D_t \p^3 \nab\times u) \cdot(\p^3\nab\times u) \dx \notag\\
=&\int_{\Om}\left((B\cdot\nab) \p^3 \nab\times B\right)\cdot(\p^3 \nab\times u) \dx \underbrace{+ \int_{\Om} \RR_1\cdot(\p^3 \nab\times u)\dx }_{:=\mathcal{I}_2} \notag\\
			\overset{B\cdot\nab}{=}&-\int_{\Om}(\p^3\nab\times B) \cdot \left(\p^3\nab\times (B\cdot\nab u)\right) \dx \underbrace{- \int_{\Om}(\p^3 \nab\times B) \cdot \left([B\cdot\nab, \p^3 \nab\times]u\right) \dx}_{:=\mathcal{I}_3} + \mathcal{I}_2.
		\end{align}
		Next, we insert the evolution equation of $B$, that is, $D_t B=(B\cdot\nab)u-B(\nab\cdot u)$ to get
		\begin{align}
			&-\int_{\Om}(\p^3\nab\times B) \cdot  \left(\p^3\nab\times (B\cdot\nab u)\right) \dx \notag \\
=&-\int_{\Om}(\p^3 \nab\times B) \cdot (\p^3 \nab\times D_t B) \dx - \int_{\Om}(\p^3 \nab\times B) \cdot  \left(\p^3 \nab\times (B\nab\cdot u) \right)\dx \notag\\
			=&-\frac{1}{2}\ddt\int_{\Omega} |\p^3\nab\times B|^2 \dx \underbrace{- \int_{\Om} \p^3\nab\times B \cdot \left([\p^3\nab\times, D_t]B + (u\cdot\nab)\p^3\nab\times B \right) \dx}_{:=\mathcal{I}_4} \notag\\
			&\underbrace{+\int_{\Om}(\p^3\nab\times B) \cdot \p^3\nab\times ( \ffp B D_t p) \dx}_{:=\mathcal{K}}.
		\end{align}
		
		Based on the concrete form of commutators, a straightforward product estimate for $\mathcal{I}_j$ $(1\le j\le 4)$ gives
		\begin{align}
			\sum_{j=1}^{4}\mathcal{I}_j \le P(E_4(t)).
		\end{align}

		The most difficult term is $\mathcal{K}$ because the highest-order term has 5-th order derivative and thus cannot be controlled by $E_4(t)$ (we omit the terms generated by derivatives falling on $\ffp$ because they must be lower-order and have no loss of $\eps$-weight thanks to \eqref{ff'} and $\p_t S=O(1)$):
		\begin{align*}
		\ffp\p^3\nab\times(B D_t p)=&~\ffp\p^3((\nab D_t p)\times B+D_t p(\nab\times B))\\
		=&-{\ffp}B\times (\p^3\nab D_t p)\underbrace{-{\ffp}[\p^3, B\times](\nab D_t p)+\p^3(\ffp D_t p(\nab\times B))}_{\RR_2},
		\end{align*}where straightforward calculation shows that $\|\RR_2\|_0\leq P(E_4(t))$. We then further analyze the problematic term $-\ffp B\times (\p^3\nab D_t p)$. We invoke a simple vector identity (in 3D) $(B\cdot \nab)B-\frac12\nab|B|^2=-B\times(\nab\times B)$ to rewrite the momentum equation to be
\[
\rho D_t u+B\times(\nab\times B)=-\nab p.
\] So, we commute $D_t$ with $\nab$ and insert this equation into $\eps^2B\times (\p^3\nab D_t p)$ to get 

		\begin{align}
			&-\ffp B\times (\p^3\nab D_t p)=-{\ffp} B\times(\p^3D_t\nab p)-\ffp B\times(\p^3(\nab u_j\p_j p))\notag\\
			= &~\ffp B\times\p^3D_t (\rho D_t u)+{\ffp} B\times \p^3 D_t(B\times(\nab\times B))- \ffp B\times(\p^3(\nab u_j\p_j p))\notag\\
			= &~\ffp\rho B\times( \p^3 D_t^2 u) +\underbrace{{\ffp} B\times D_t(B\times\p^3 (\nab\times B))}_{=\mathcal{K}_1}\notag\\
&\underbrace{+\ffp B\times([\p^3 D_t,\rho] D_t u)+\ffp B\times\left([\p^3, D_t](\nab\times B)+D_t\left([\p^3,B\times](\nab\times B)\right)\right)-\ffp B\times(\p^3(\nab u_j\p_j p))}_{\RR_3},
		\end{align} where $\|\RR_3\|_0\leq P(E_4(t))$ can be proved by using again the concrete forms of these commutators as the order of derivatives does not exceed 4. In particular, in the commutator $D_t\left([\p^3,B\times](\nab\times B)\right)$, the highest-order terms have the form $D_t\left(\p B\times \p^2(\nab\times B)\right)$ or $D_t\left(\p^3 B\times(\nab\times B)\right)$ and contain a time derivative, in which there is no loss of either derivative or $\eps$-weights because there is already an $\eps^2$ factor in $\RR_3$. Now, we analyze the contribution of $\mathcal{K}_1$ in the integral $\mathcal{K}$, which reads
		\begin{align}
		\io \ffp (\p^3\nab\times B) \cdot \left(B\times D_t\left(B\times\p^3 (\nab\times B)\right)\right)\dx.
		\end{align} Then we let $\bd{u}=B$, $\bd{v}=D_t\left(B\times\p^3 (\nab\times B)\right)$ and $\bd{w}=\p^3(\nab\times B)$ in the vector identity $(\bd{u}\times \bd{v})\cdot\bd{w}=-(\bd{u}\times \bd{w})\cdot\bd{v}$ to get
		\begin{align}
		&\io\ffp (\p^3\nab\times B) \cdot \left(B\times D_t\left(B\times\p^3( \nab\times B)\right)\right)\dx=-\io{\ffp} D_t\left(B\times(\p^3 \nab\times B)\right) \cdot \left(B\times(\p^3\nab\times B)\right)\dx,
		\end{align} in which the right side contributes to an $\eps$-weighted energy terms instead of introducing a loss of derivative
		\begin{align}
		&-\io {\ffp} D_t\left(B\times(\p^3 \nab\times B)\right) \cdot \left(B\times(\p^3\nab\times B)\right)\dx\no\\
		=&-\frac12\ddt\io{\ffp}\left|B\times(\p^3 \nab\times B)\right|^2\dx +\underbrace{\frac12\io(\nab\cdot u+{D_t \ffp})\left|B\times(\p^3 \nab\times B)\right|^2\dx}_{\RR_4}.
		\end{align}  The term $\RR_4$ is directly controlled by $\|u\|_{W^{1,\infty}}\|B\|_{L^{\infty}}^2\|B\|_4^2\leq P(E_4(t))$. Therefore, we control the term $\mathcal{K}$ with both $E_4(t)$ and $E_5(t)$ as follows:
	    	\begin{align}
	    	\mathcal{K} + \frac12\ddt\io{\ffp}\left|B\times(\p^3 \nab\times B)\right|^2\dx  \le \|B\|_4\left(P(E_4(t))+\|\rho B\|_{\infty}\|\eps^2 D_t^2 u\|_3\right)\le P(E_4(t),E_5(t)),
	   	 \end{align}
		which gives us the energy estimate
		\begin{align}
				\ddt \left(\|\nab\times u\|_3^2 +\|\nab\times B\|_3^2+\left\|{\ffp}^{\frac12} B\times(\nab\times B)\right\|_3^2\right)\leq P(E_4(t),E_5(t)).
		\end{align}

		Recall that $D_t=\p_t+u\cdot \nab=\p_t+\bar{u}\cdot\cnab+u_3\p_3$ and $u_3|_{\Sigma}=0$ imply that $D_t$ is a tangential derivative. So, the above analysis shows that the curl estimate for compressible ideal MHD cannot be closed in standard Sobolev space, but in fact we can trade one normal derivative $(\nab\times)$ for two tangential derivatives together with square Mach number weight, that is, $\eps^2 \TT^2$. This step naturally introduces the so-called anisotropic Sobolev space and also explains why we add $\eps^2$weight to $E_5(t)$.

		Similarly, we can prove the same conclusion for $\p^{\alpha}\p_t^k$ with $k+|\alpha|=3$ by replacing $\p^3$ with $\eps^{(k-1)_+}\p^{\alpha}\p_t^k$. Indeed, the highest order derivatives in the above commutators do not exceed 4-th order, and there is no loss of Mach number weight because none of the above steps creates negative power of Mach number. Besides, the problematic term can still be analyzed in the same manner. Throughout this section, we omit the terms generated by derivatives falling on $\ffp$.
\[
\eps^{(k-1)_++2}B\times (\p^\alpha\p_t^k\nab D_t p)\lleq -\eps^{(k-1)_+}\rho B\times\p^{\alpha}\p_t^k(\eps^2D_t^2 u)-\eps^{(k-1)_+}{\ffp}B\times D_t\left(B\times(\p^{\alpha}\p_t^k (\nab\times B))\right),
\]where the $L^2$ norms of the omitted lower-order terms are controlled by $P(E_4(t))$. The contribution of the last term in the above equality is again the energy term $-\frac12\ddt\io{\ffp}\eps^{(k-1)_+}\left|B\times(\p^\alpha\p_t^k \nab\times B)\right|^2\dx$. The proof of this follows in the same way if we replace $\p^3$ above by $\eps^{(k-1)_+}\p^{\alpha}\p_t^k$ (using the vector identity $(\bd{u}\times \bd{v})\cdot\bd{w}=-(\bd{u}\times \bd{w})\cdot\bd{v}$. Hence, we conclude the vorticity analysis for $E_4(t)$ with the following inequality
		\begin{align}
			\sum_{k=0}^{3}\ddt \left(\left\|\eps^{(k-1)_+}\nab\times\p_t^k u\right\|_{3-k}^2 +\left\|\eps^{(k-1)_+}\nab\times\p_t^k B\right\|_{3-k}^2+\left\|\eps^{(k-1)_+}{\ffp^{\frac12}}B\times\left(\p_t^k\nab\times B\right) \right\|_{3-k}^2\right) \leq P(E_4(t), E_5(t)).
		\end{align}
		\end{proof}

		The above div-curl analysis shows that it is necessary to control $\|\eps^{2+(k-1)_+}\TT^2 \p_t^k u\|_{3-k}$, which should be analyzed together with $\|\eps^{2+(k-1)_+}\TT^2 \p_t^kB\|_{3-k}$. When $k=3$, $\TT^2\p_t^3$ is a purely tangential derivative and we would like to postpone the tangential estimates to the next subsection. When $0\leq k\leq 2$, we again apply Lemma \ref{hodgeTT}  to $\|\eps^{2+(k-1)_+}\p_t^k\TT^2 (u,B)\|_{3-k}$ to obtain
		\begin{align}
			\|\eps^{2+(k-1)_+}\p_t^k\TT^2 u\|_{3-k}^2 \lesssim&~ \|\eps^{2+(k-1)_+}\p_t^k\TT^2 u\|_0^2 + \|\eps^{2+(k-1)_+}\p_t^k\nab\cdot\TT^2 u \|_{2-k}^2+ \|\eps^{2+(k-1)_+}\p_t^k\nab\times\TT^2 u \|_{2-k}^2,\\
			\|\eps^{2+(k-1)_+}\p_t^k\TT^2 B\|_{3-k}^2 \lesssim&~ \|\eps^{2+(k-1)_+}\p_t^k\TT^2 B\|_0^2 +  \|\eps^{2+(k-1)_+}[\TT^2,\nab\cdot]\p_t^k B \|_{2-k}^2,
		\end{align}
	    where we already use the boundary conditions and the divergence constraint to eliminate the boundary normal traces and $\nab\cdot B$. The divergence part is reduced to the estimates of $p$ by using the continuity equation
	    \begin{align}
	    	\|\eps^{2+(k-1)_+}\p_t^k\nab\cdot \TT^2 u\|_{2-k}^2\leq&~\|\eps^{2+(k-1)_+}\p_t^k\TT^2 \nab\cdot u\|_{2-k}^2 + \|\eps^{2+(k-1)_+}[\TT^2,\nab\cdot]\p_t^k u\|_{2-k}^2\notag\\
	    	{\lesssim}&~\left\|\eps^{4+(k-1)_+} \p_t^k\TT^2( D_t p )\right\|_{2-k}^2 + \left\|\eps^{2+(k-1)_+}[\TT^2,\nab\cdot]\p_t^k u\right\|_{2-k}^2,
	    \end{align}
        where $\|\eps^{2+(k-1)_+}[\TT^2,\nab\cdot]\p_t^k u\|_{2-k}^2$ can be controlled by $\eps^4 E_4(t) \leq \eps^4 E_4(0)+\int_0^t E_5(\tau)\mathrm{d}\tau$. Indeed, such commutators only appear when we commute the third component of $\nab\cdot$ with $\omega \p_3$. We have the following identity which can be shown by induction:
        \begin{equation}\label{Com_p3_TP3}
        	[(\omega\p_3)^k,\p_3](\cdot)= \sum_{\ell\le k-1} c_{k,\ell} (\omega\p_3)^\ell \p_3(\cdot)= \sum_{\ell\le k-1} d_{k,\ell} \p_3(\omega\p_3)^\ell(\cdot),
        \end{equation}
        where $c_{k,\ell}$ and $d_{k,\ell}$ are smooth functions that depend on $k$, $\ell$ and the derivatives (up to order $k$) of $\omega$. For example, we assume $k=0$ and $\TT^2=(\omega\p_3)\TP$, for which the highest-order term in the commutator is $\|\eps^2(\p_3\omega(x_3))\p_3\TP u_3\|_2\leq \eps^4 E_4(t).$ This term can be either controlled by $\eps^4 E_4(0)+\int_0^t E_5(\tau)\mathrm{d}\tau$ or absorbed by $E_4(t)$ when $\eps$ is sufficiently small. Similar estimates apply to the commutator $ \|\eps^{2+(k-1)_+}[\TT^2,\nab\cdot]\p_t^k B \|_{2-k}^2$ so we omit the details. 

   The term $\|\eps^{4+(k-1)_+} \p_t^k\TT^2( D_t p )\|_{2-k}^2$ will be further reduced to the tangenital estimates of $(u,B)$ by using the argument in Section \ref{sect reduction q}. Finally, we need to control the tangential derivatives of $u$ and $B$ (including time derivatives) in $E_5(t)$ and the $L^2$ norm of $\p_t^5 p$ with suitable Mach number weights.
	    
	    Next, we analyze the vorticity term. The proof is parallel to Lemma \ref{curl H4}. 
	    \begin{lem}\label{curl H5} It holds that
	    	\begin{equation}
		\begin{aligned}
	    		&\sum_{k=0}^{2}\ddt\left( \left\|\eps^{2+(k-1)_+}\nab\times \TT^2\p_t^k u\right\|_{2-k}^2 + \left\|\eps^{2+(k-1)_+}\nab\times \TT^2\p_t^k B\right\|_{2-k}^2+\left\|\eps^{2+(k-1)_+}{\ffp^{\frac12}}B\times\left(\nab\times\TT^2\p_t^k B\right) \right\|_{2-k}^2  \right)\\
		\le &~P(E_4(t),E_5(t),E_6(t)).
		\end{aligned}
	    	\end{equation}
	    \end{lem}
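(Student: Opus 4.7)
\begin{pf}[Proof proposal for Lemma \ref{curl H5}] My plan is to repeat the strategy of Lemma \ref{curl H4}, but now at one higher derivative order, and with an additional outer factor of $\eps^{(k-1)_+}\TT^2\p_t^k\p^{2-k}$ acting on the vorticity equation. The starting point will again be \eqref{curlv eq}, namely
\[
\rho D_t(\nab\times u)-(B\cdot\nab)(\nab\times B)=\rho[D_t,\nab\times]u-[B\cdot\nab,\nab\times]B-(\nab\rho)\times(D_tu),
\]
whose right-hand side only contains first-order derivatives. For each fixed $0\le k\le 2$ and each multi-index $\alpha$ with $|\alpha|=2-k$, I would apply $\eps^{2+(k-1)_+}\p^{\alpha}\TT^2\p_t^k$ to this equation, so that the quantities being controlled are exactly those appearing in the $H^{2-k}$ norms in the statement, and the order of derivatives never exceeds that of $E_6(t)$.

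Next I would perform the same $L^2$-type pairing as in Lemma \ref{curl H4}: multiply by $\eps^{2+(k-1)_+}\p^{\alpha}\TT^2\p_t^k(\nab\times u)$ and integrate. The $\rho D_t$ term produces $\tfrac12\ddt\|\eps^{2+(k-1)_+}\p^{\alpha}\TT^2\p_t^k(\nab\times u)\|_0^2$ plus a transport-type commutator controlled by $P(E_4,E_5,E_6)$; the $(B\cdot\nab)$ term transfers (since $B_3|_\Sigma=0$, no boundary integral is generated) onto $\nab\times B$ and, after inserting the evolution equation of $B$, yields both $-\tfrac12\ddt\|\eps^{2+(k-1)_+}\p^{\alpha}\TT^2\p_t^k(\nab\times B)\|_0^2$ and the dangerous term
\[
\mathcal{K}:=\io\eps^{2+(k-1)_+}\p^{\alpha}\TT^2\p_t^k(\nab\times B)\cdot\eps^{2+(k-1)_+}\p^{\alpha}\TT^2\p_t^k\nab\times(\ffp BD_tp)\dx.
\]
All commutator remainders have total order at most $5-k+2=7-k\le 7$ derivatives with the correct Mach weight, hence are absorbed in $P(E_4,E_5,E_6)$.

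The main obstacle, as in Lemma \ref{curl H4}, is the control of $\mathcal{K}$. My plan is to first use $\ffp D_tp=-\nab\cdot u$ to rewrite the leading part of $\ffp\nab\times(BD_tp)$ as $-\ffp B\times\nab D_tp$ modulo commutators, then commute $[\nab,D_t]=-\nab u^j\p_j$ and insert the momentum equation in the form $\nab p=-\rho D_tu-B\times(\nab\times B)$. This produces exactly the two structural terms identified in Section \ref{sect stat 2}: the first, $\eps^{2+(k-1)_+}\rho B\times\p^{\alpha}\TT^2\p_t^k(\eps^2 D_t^2u)$, pairs with $\eps^{2+(k-1)_+}\p^{\alpha}\TT^2\p_t^k(\nab\times B)$ to give a quantity bounded by $\|\rho B\|_\infty\cdot P(E_6(t))$ since $\eps^{4+(k-1)_+}D_t^2\p_t^k\TT^2 u$ is part of $E_6$; the second, $\eps^{2+(k-1)_+}\ffp B\times D_t\bigl(B\times\p^{\alpha}\TT^2\p_t^k(\nab\times B)\bigr)$, after using $(\bd u\times\bd v)\cdot\bd w=-(\bd u\times\bd w)\cdot\bd v$ with $\bd u=B$, collapses to $-\tfrac12\ddt\|\eps^{2+(k-1)_+}\ffp^{1/2}B\times\p^{\alpha}\TT^2\p_t^k(\nab\times B)\|_0^2$ plus lower-order remainders controlled by $P(E_4,E_5)$. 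Summing the resulting identity over $|\alpha|=2-k$ and then over $k=0,1,2$ gives the stated bound, with the extra $E_6(t)$ on the right-hand side coming precisely from the $\eps^2D_t^2u$ exchange, which is why the reduction procedure must climb one more level as illustrated in diagram \eqref{diagram1}.
\end{pf}
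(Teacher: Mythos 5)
Your overall strategy is the same as the paper's: apply the fourth-order operator $\eps^{2+(k-1)_+}\p^{\alpha}\TT^2\p_t^k$ to the (curled) momentum equation, pair in $L^2$, transfer $B\cdot\nab$ by parts (no boundary term since $B_3|_\Sigma=0$), insert the evolution equation of $B$, and resolve the dangerous term $\mathcal{K}$ via the continuity equation, the commutation $[\nab,D_t]$, the momentum equation in the form $\nab p=-\rho D_tu-B\times(\nab\times B)$, and the vector identity $(\bd{u}\times\bd{v})\cdot\bd{w}=-(\bd{u}\times\bd{w})\cdot\bd{v}$. The two structural outcomes you identify --- the $\eps^2D_t^2u$ exchange landing in $E_6(t)$ and the $\ffp^{1/2}B\times(\cdots)$ energy term --- are exactly what the paper obtains.

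The gap is in your one-line dismissal of the commutators: ``total order at most $7-k$ derivatives with the correct Mach weight'' is not sufficient, because the energy is anisotropic, so what matters is the split between normal and tangential derivatives, not the total count. Concretely, in $[\rho u\cdot\nab,\p^{2}\nab\times\TT^2]u$ (the $k=0$, $\alpha_4=0$ case) one of the leading commutator terms is, schematically, $\TT(\rho u_3)\,\p_3^{3}\TT\p_3 u$: with only the weight $\eps^2$ this carries four normal and one tangential derivative on $u$, which is controlled neither by $E_4$ (five derivatives in total) nor by $E_5$ (which buys three normal derivatives only at the price of two tangential ones). The paper rescues such terms by exploiting $u_3|_\Sigma=0$ (resp.\ $B_3|_\Sigma=0$) and the fundamental theorem of calculus to write $|\TT(\rho u_3)(x_3)|\lesssim\omega(x_3)\|\p_3\TT(\rho u_3)\|_\infty$, then absorbing $\omega(x_3)$ into one $\p_3$ so that a normal derivative becomes the tangential derivative $\omega\p_3$. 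The same device, combined with the identity $[(\omega\p_3)^k,\p_3]=\sum_{\ell\le k-1}d_{k,\ell}\p_3(\omega\p_3)^\ell$, is what handles the case $\alpha_4\ge1$ (where $\TT^2$ contains $\omega\p_3$ and hence no longer commutes with $\nab\times$ or $\nab\cdot$, and an extra commutator also appears inside $D_t(B\times\p^2\TT^2(\nab\times B))$); your proposal does not address this case at all. These are not cosmetic details: without the boundary-vanishing/weight-function argument, several commutator terms genuinely exceed the normal-derivative budget of the anisotropic energy and the estimate would not close.
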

	    \begin{proof} For the case of $k=0$, $\alpha_3=\alpha_4=0$, $\TT^\alpha=\p_t^{\alpha_0}\p_1^{\alpha_1}\p_2^{\alpha_2}$ with $\alpha_0+\alpha_1+\alpha_2=2$, we write $\TT^\alpha$ to be $\TT^2$ for convenience and take $\p^2\nab\times\TT^2$ in the momentum equation $\rho D_t u-(B\cdot\nab)B=-\nab (p+\frac12|B|^2) $ to get 
		\begin{equation}
			\rho D_t(\p^2\nab\times \TT^2 u)-(B\cdot\nab)(\p^2 \nab\times \TT^2 B)=[\rho D_t, \p^2\nab\times \TT^2]u - [B\cdot\nab,\p^2\nab\times \TT^2]B.
		\end{equation}Again, it suffices to prove that
\[
\frac12\ddt\io\rho\left|\eps^2\p^2\nab\times \TT^2 u\right|^2+\left|\eps^2\p^2\nab\times \TT^2 B\right|^2+\left|\eps^2{\ffp^{\frac12}} B\times(\p^2 \nab\times \TT^2 B)\right|^2\dx\leq P(E_4(t),E_5(t),E_6(t)).
\]
		We compute that
		\begin{align}
			&\frac{1}{2}\ddt\int_{\Om}\rho |\eps^2\p^2\nab\times \TT^2 u|^2 \dx \notag\\
			=&\int_{\Om}\rho D_t (\eps^2\p^2\nab\times \TT^2 u) \cdot(\eps^2\p^2\nab\times \TT^2 u) \dx\notag\\
& \underbrace{- \int_{\Om} \rho(u\cdot\nab) (\eps^2\p^2\nab\times \TT^2 u)\cdot (\eps^2\p^2\nab\times \TT^2 u) \dx - \int_{\Om}(\frac{\p\rho}{\p p}\p_t p+\p_{S}\rho\p_t S ) |\eps^2\p^2\nab\times \TT^2 u|^2 \dx}_{:=\mathcal{I}_1'} \notag\\
			=&\int_{\Om}\eps^2(B\cdot\nab)(\p^2\nab\times\TT^2 B)\cdot(\eps^2\p^2\nab\times \TT^2 u) \dx\notag\\
& \underbrace{+\int_{\Om} \eps^2 \left( [\rho D_t,\p^2\nab\times \TT^2]u-[B\cdot\nab,\p^2\nab\times\TT^2]B  \right)\cdot(\eps^2\p^2\nab\times \TT^2 u) \dx}_{:=\mathcal{I}_2'} + \mathcal{I}_1'.
			\end{align} Then we integrate $B\cdot\nab$ by parts and invoke the evolution equation of $B$, namely $D_t B=(B\cdot\nab)u-B(\nab\cdot u)$ to get 
			\begin{align}
			&\int_{\Om}\eps^2(B\cdot\nab)(\p^2\nab\times\TT^2 B)\cdot(\eps^2\p^2\nab\times \TT^2 u) \dx\notag\\
			=&-\int_{\Om}\eps^2(\p^2\nab\times\TT^2 B)\cdot\eps^2( \p^2\nab\times\TT^2(B\cdot\nab u) ) \dx- \int_{\Om}\eps^2(\p^2\nab\times\TT^2 B)\cdot\eps^2( [B\cdot\nab,\p^2\nab\times\TT^2]u) \dx \notag\\
			=&-\int_{\Om}\eps^2(\p^2\nab\times\TT^2 B)\cdot\eps^2( \p^2\nab\times\TT^2(D_t B) ) \dx -\int_{\Om}\eps^2(\p^2\nab\times\TT^2 B)\cdot\eps^2( \p^2\nab\times\TT^2(B\nab\cdot u) ) \dx \notag\\
			&- \int_{\Om}\eps^2(\p^2\nab\times\TT^2 B)\cdot\eps^2( [B\cdot\nab,\p^2\nab\times\TT^2]u) \dx\notag\\
			=&-\frac{1}{2}\ddt\int_{\Om}|\eps^2\p^2\nab\times\TT^2 B|^2 \dx\underbrace{+ \int_{\Om}\left(\eps^2\p^2\nab\times\TT^2 B\right)\cdot \left(\eps^2{\ffp} \p^2\nab\times\TT^2(B D_t p)\right) \dx}_{:=\mathcal{K}'}  \notag\\
&\underbrace{- \int_{\Om} \eps^2\p^2\nab\times\TT^2 B \cdot\eps^2\left([\p^2\nab\times\TT^2,D_t]B +(u\cdot\nab)(\eps^2\p^2\nab\times\TT^2 B)+[B\cdot\nab,\p^2\nab\times\TT^2]u\right)\dx}_{:=\mathcal{I}_3'} .
		\end{align}
		A straightforward product estimate for $\mathcal{I}_1'$ gives us
		\begin{equation}
			\mathcal{I}_1'\le P(E_4(t),E_5(t)).
		\end{equation}
		For $\mathcal{I}_2'$, we first consider the commutator $\eps^2[\rho u\cdot\nab,\p^2\nab\times\TT^2]u$. Since,	
		\begin{align}
			[\rho u\cdot\nab,\p^2\nab\times\TT^2]u= \p(\rho u_j)\p^2\TT^2\p_j u_i + \TT(\rho u_j)\p^3\TT\p_j u_i+\text{ lower order terms},
		\end{align}
		we obtain
		\begin{align}
			\eps^2\|\p(\rho u_j)\p^2\TT^2\p_j u_i\|_0 \le&~ \|\p(\rho u)\|_\infty \|\eps^2\TT^2 u\|_3 \le P(E_4(t), E_5(t)),\\
			\eps^2 \|\TT(\rho u_j)\p^3\TT\p_j u_i\|_0 \le &~ \|\TT(\rho u)\|_\infty \|\eps^2\TT\p_j u\|_3 \le P(E_4(t), E_5(t)), ~ j=1,2.
		\end{align}
	    Since $u_3|_{\Sigma}=0$, using fundamental theorem of calculus, we have $u_3(x_3)=u_3(-1)+\int_{-1}^{x_3}\p_3u_3(\xi_3)\mathrm{d}\xi_3$ (W.L.O.G. $x_3<0$), so the length of the interval $[-1,x_3]$ is comparable to the weight function $\omega(x_3)=(1-x_3)(1+x_3)$. We have \[ |\TT(\rho u_3)(x_3)|\leq 0+\int_{-1}^{x_3}\|\p_3\TT(\rho u_3)(\xi_3)\|_{\infty}\mathrm{d}\xi_3  \lesssim \omega(x_3)\|\p_3\TT(\rho u_3)\|_{\infty}. \]

Then we can combine this $\omega(x_3)$ with $\p_3$ to convert a normal derivative to a tangential derivative.
	    \begin{align}
	    	\eps^2 \|\TT(\rho u_3)(x_3)\p^3\TT\p_3 u_i\|_0\lesssim&~ \|\p_3\TT(\rho u_3)\|_{\infty}\|\eps^2\omega(x_3)\p^3\TT\p_3 u\|_0 \notag\\
	    	\le&~\|\p_3\TT(\rho u_3)\|_{\infty} \|\eps^2\p^3\TT\omega(x_3)\p_3 u\|_0 + P(E_4(t),E_5(t))\notag\\
	    	\le&~P(E_4(t),E_5(t)).
 	    \end{align}
	    Thus,
	    \begin{equation}
	    	\|\eps^2[\rho u\cdot\nab,\p^2\nab\times\TT^2]u\|_0\le P(E_4(t),E_5(t)).
	    \end{equation}
		Similarly, since $B_3|_{\Sigma}=0$,
		\begin{equation}
			\|\eps^2[\rho \p_t,\p^2\nab\times\TT^2]u\|_0 + \|\eps^2[B\cdot\nab,\p^2\nab\times\TT^2]B\|_0 \le P(E_4(t),E_5(t)),
		\end{equation}
	    then
	    \begin{equation}
	    	\mathcal{I}_2'\le P(E_4(t),E_5(t)).
	    \end{equation}
		We can also get the estimate of $\mathcal{I}_3'$ in a similar way: 
		\begin{equation}
			\mathcal{I}_3'\le P(E_4(t),E_5(t)).
		\end{equation}
			For $\mathcal{K}'$, we consider the estimate of $\eps^2{\ffp} \p^2\nab\times\TT^2(B D_t p)$. Since,	
		\begin{align}
			&\eps^2{\ffp} \p^2\nab\times\TT^2(B D_t p)\notag\\
			=&~\eps^2{\ffp} \p^2\TT^2(D_t \nab p\times B) \underbrace{+ \eps^2{\ffp} \p^2\TT^2(\nab u\cdot\nab p\times B) + \eps^2{\ffp} \p^2\TT^2(D_t p \nab\times B)}_{:=\mathcal{R}_1'}\notag\\
			=&-B\times\eps^2{\ffp} \p^2\TT^2 D_t \nab p \underbrace{- \eps^2{\ffp}[\p^2\TT^2, B\times](D_t\nab p)}_{\mathcal{R}_2'}+ \mathcal{R}_1'\notag\\
			=&~B\times(\eps^2{\ffp}\p^2\TT^2 D_t(\rho D_t u)) + B\times \eps^2{\ffp}\p^2\TT^2 D_t( B\times(\nab\times B) ) + \mathcal{R}_1'+ \mathcal{R}_2'\notag\\
			=&~B\times(\eps^2{\ffp}\rho \p^2\TT^2 D_t^2 u)  + \underbrace{\eps^2{\ffp} B\times  D_t \left(B\times\p^2\TT^2 (\nab\times B) \right)}_{\mathcal{K}_1'} + \mathcal{R}_1'+ \mathcal{R}_2'\notag\\
			&\underbrace{+B\times \eps^2{\ffp}([\p^2\TT^2D_t,\rho]D_tu)+\eps^2 B\times\left([\p^2\TT^2, D_t](\nab\times B)+D_t\left([\p^2\TT^2,B\times](\nab\times B)\right)\right)}_{\mathcal{R}_3'},
		\end{align}
	    we get that
	    \begin{align}
	    	\|\mathcal{R}_1'+\mathcal{R}_2'+\mathcal{R}_3'\|_0\lesssim& P(E_4(t)) ( \|\eps^2 \TT^2 B\|_3 + \|\eps^2 \TT^3 B\|_2 + \|\eps^4 \TT^2 D_t p\|_{2} + \|\eps^4 \TT^2 u\|_3 ).
	    \end{align} It should be noted that when commuting $D_t$ with $\TP$ or $\p_t$, no extra normal derivative is generated as $D_t$ itself is also a tangential derivative. Indeed, when $j=3$ in $[D_t,\TP_i](\cdot)=-\TP_iu_j\p_j(\cdot)$ for $i=t,1,2$, we have $\TP_i u_3|_{\Sigma}=0$ and so this is still a tangential derivative. 

The analysis of $\mathcal{K}_1'$ is also similar to the analysis of $\mathcal{K}_1$. That is, the contribution of $\mathcal{K}'_1$ in $\mathcal{K}'$ is an energy term plus controllable error terms
		\begin{align}
		&\int_{\Om}\left(\eps^2\p^2\nab\times\TT^2 B\right)\cdot \left(\eps^2{\ffp} B\times  D_t \left(B\times\p^2 (\nab\times \TT^2 B) \right) \right)\dx\no\\
		=&-\frac12\ddt\io\left|\eps^2{\ffp^{\frac12}} B\times\p^2 (\nab\times \TT^2 B)\right|^2\dx+\frac12\io({D_t\ffp}+\nab\cdot u)\left|\eps^2{\ffp^{\frac12}} B\times\p^2 (\nab\times \TT^2 B)\right|^2\dx\no\\
		\lesssim&-\frac12\ddt\io\left|\eps^2{\ffp^{\frac12}} B\times\p^2 (\nab\times \TT^2 B)\right|^2\dx+P(E_4(t))E_5(t),
		\end{align}
       and thus
        \begin{equation}
            \mathcal{K}'+\frac12\ddt\io\left|\eps^2{\ffp^{\frac12}} B\times\p^2 (\nab\times \TT^2 B)\right|^2\dx\le P(E_4(t))\|\eps^4 \TT^2 D_t^2 u\|_2+ P(E_4(t),E_5(t)).
        \end{equation}Here we use the fact that $\TT^2$ commutes with $\nab$ when $\alpha_4=0$. So we obtain the vorticity estimates for $k=0$ and $\alpha_4=0$
        \begin{equation}
		\begin{aligned}
        	&\ddt\left(\left\|\eps^2\nab\times \TT^2 u\right\|_2^2+ \left\|\eps^2\nab\times \TT^2 B\right\|_2^2+\left\|\eps^2{\ffp^{\frac12}} B\times (\nab\times \TT^2 B)\right\|_2^2\right)\\
\le &~P(E_4(t),E_5(t))\|\eps^4 \TT^2 D_t^2 u\|_2\le P(E_4(t),E_5(t),E_6(t)).
		\end{aligned}
        \end{equation}
		
		For the case of $\alpha_4\ge 1$, since $[\p_3,\omega\p_3]\neq0$, we need to reconsider the estimates of commutators. First, we consider commutators: $[\TT^2,\nab\times](\cdot)$ and $[\TT^2,\nab\cdot](\cdot)$.  According to the identity \eqref{Com_p3_TP3}, such commutators contain at most two derivatives and at most one of them is $\p_3$. Thus, we have
		\begin{equation}
			\|\eps^2[\TT^2,\nab\times] u\|_2 + \|\eps^2[\TT^2,\nab\times]B\|_2 + \|\eps^2[\TT^2,\nab\cdot]u\|_2 \le \eps^2\left(E_4(0)+\int_0^t P(E_4(\tau))\mathrm{d}\tau\right).
		\end{equation}
	   Next, for example, we consider the commutator $\eps^2[B\cdot\nab, \p^2\nab\times (\omega\p_3)\TT]B$ in $\mathcal{I}_2'$ (we take $\alpha_4=1$, $\TT^{\alpha}=(\omega\p_3)\p_t^{\alpha_0}\p_1^{\alpha_1}\p_2^{\alpha_2}$ with $\alpha_0+\alpha_1+\alpha_2=1$ without loss of generality),
		\begin{align}
			[B\cdot\nab, \p^2\nab\times (\omega\p_3)\TT]B =&~[B_3\p_3, \omega\p_3^4\TT]B +\text{ lower order terms} \notag\\
			\lleq &~B_3\p_3( \omega\p_3^4\TT B )- \omega\p_3^4\TT(B_3\p_3 B)\notag\\
			\lleq&~B_3(\p_3\omega )\p_3^3\TT B,
		\end{align} where the ``lower order terms" represent the terms that do not have the leading order derivative and the terms that $\p^2\nab\times$ contributes to tangential derivatives instead of $\p_3^3$ as above.

		Since $B_3|_{\Sigma}=0$, we again use the fundamental theorem of calculus to get
		\begin{align}
			\eps^2 \|B_3(\p_3\omega) \p_3^3\TT B\|_0 =&~\|\eps^2 |B_3(x_3)| \p_3^3\TT B\|_0 \notag\\
			\lesssim&~\|\p_3 B_3\|_{\infty}\|\eps^2 \omega(x_3)\p_3^3\TT B\|_0 \notag\\
			\le&~P(E_4(t)).
		\end{align}
		All the other terms in $\mathcal{I}_2'$ and $\mathcal{I}_3'$ can be treated similarly. Finally, we need to consider the term $\mathcal{K}_1'$ because we shall control an extra commutator arising from
		\[
D_t \left(B\times\p^2\TT^2 (\nab\times B)\right) =D_t \left(B\times\p^2 (\nab\times\TT^2 B)\right) +D_t \left(B\times\p^2\left([\TT^2 ,\nab\times] B\right) \right).
		\] Again, using \eqref{Com_p3_TP3}, $[\TT^2 ,\nab\times] B$ contains at most two derivatives and at most one of these two derivatives is $\p_3$. Thus, the highest-order part in the term $D_t \left(B\times\p^2\left([\TT^2 ,\nab\times] B\right) \right)$ contains at most 3 normal derivatives and two tangential derivatives, whose $L^2(\Om)$ can still be bounded by $E_5(t)$ aftering multiplying the weight $\eps^4$.

At this point, we obtain
		\begin{align}
			&\ddt\left(\left\|\eps^2\nab\times \TT^2 u\right\|_2^2 + \left\|\eps^2\nab\times \TT^2 B\right\|_2^2+\left\|\eps^2{\ffp^{\frac12}} B\times (\nab\times \TT^2 B)\right\|_2^2  \right)\no\\
\le&~P(E_4(t),E_5(t))\|\eps^4\TT^4 u\|_2\le P(E_4(t),E_5(t),E_6(t)).
		\end{align}

		For $k=1,2$, we obtain the following energy estimates in similar way as in the proof of Lemma \ref{curl H4}
		\begin{align}
			&\ddt\left(\left\|\eps^{2+(k-1)_+}\nab\times\TT^2\p_t^k u\right\|_{2-k}^2+\left\|\eps^{2+(k-1)_+}\nab\times\TT^2\p_t^k B\right\|_{2-k}^2+\left\|\eps^{2+(k-1)_+}{\ffp^{\frac12}} B\times (\nab\times \TT^2\p_t^k B)\right\|_{2-k}^2\right)\no\\
		\le&~P(E_4(t),E_5(t),E_6(t)).
		\end{align}
	    The proof of Lemma \ref{curl H5} is completed.\end{proof}

The proof of Lemma \ref{curl H5} shows that we need to trade one normal derivative for two more tangentials with Mach number weight $\eps^2$, that is, we need to control $\|\eps^4\TT^4(\p_t^ku,\p_t^kB)\|_{2-k}~(k=0,1,2)$. We again do the analogous div-curl analysis for the case $k=0,1$ and reduce it to $\|\eps^6\TT^6(\p_t^ku,\p_t^kB)\|_{1-k}$, and then repeat once more such that we finally need to control $\|\eps^8\TT^8(u,B)\|_{0}$ which is a purely tangential estimate. To sum up, we can obtain the following estimates by mimicing the proof of Lemma \ref{curl H4} and Lemma \ref{curl H5}.

		\begin{lem}		
	The div-curl analysis and vorticity estimates for $\|\eps^4\TT^4 (u,B)\|_{2}$, $\|\eps^4\p_t\TT^4 (u,B)\|_{1}$ and $\|\eps^6\TT^6 (u,B)\|_{1}$ are listed below:
		\begin{align}
		    &\|\eps^4\TT^4\p_t^k u\|_{2-k}^2\lesssim ~\|\eps^4\TT^4 \p_t^k u\|_0^2 +\|\eps^4\nab\times \TT^4 \p_t^k u\|_{1-k}^2 +\|\eps^4\nab\cdot\TT^4 \p_t^k u\|_{1-k}^2 ,~k=0,1,\\
		    &\|\eps^4\TT^4 \p_t^k B\|_{2-k}^2\lesssim ~\|\eps^4\TT^4 \p_t^k B\|_0^2 +\|\eps^4\nab\times \TT^4 \p_t^k B\|_{1-k}^2+ \|\eps^{4+(k-1)_+}[\TT^4,\nab\cdot]\p_t^k B \|_{1-k}^2,~k=0,1,\\
		&\sum_{k=0,1}\ddt\left( \left\|\eps^4\nab\times \p_t^k\TT^4 u\right\|_{1-k}^2 + \left\|\eps^4\nab\times \p_t^k\TT^4 B\right\|_{1-k}^2+\left\|\eps^4{\ffp^{\frac12}} B\times (\nab\times\p_t^k\TT^4 B)\right\|_{1-k}^2 \right)\no\\
\le &~P(E_4(t),E_5(t),E_6(t))\|\eps^6\TT^6\p_t^ku\|_{1-k},\\
		    &\|\eps^6\TT^6 u\|_1^2\lesssim ~\|\eps^6\TT^6 u\|_0^2 +\|\eps^6\nab\times \TT^6 u\|_0^2 +\|\eps^6\nab\cdot\TT^6 u\|_0^2,\\
		   & \|\eps^6\TT^6 B\|_1^2\lesssim ~\|\eps^6\TT^6 B\|_0^2 +\|\eps^6\nab\times \TT^6 B\|_0^2 + \|\eps^{6}[\TT^6,\nab\cdot] B \|_{0}^2,\\
				&\ddt\left(\left\|\eps^6\nab\times \TT^6 u\right\|_{0}^2 + \left\|\eps^6\nab\times \TT^6 B\right\|_{0}^2  +\left\|\eps^6{\ffp^{\frac12}} B\times(\nab\times \TT^6 B)\right\|_0^2\right)\notag\\
\le &~P(E_4(t),E_5(t),E_6(t),E_7(t))\|\eps^8\TT^8 u\|_{0}.
		\end{align}
		\end{lem}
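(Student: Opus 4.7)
The plan is to mimic the strategy used in Lemmas \ref{curl H4} and \ref{curl H5}, applied at successively higher derivative count with correspondingly escalating Mach number weights $\eps^4$ and $\eps^6$. The six estimates in the lemma split naturally into three Hodge decomposition estimates and three vorticity evolution estimates, and the only genuinely new issue is bookkeeping of commutators when $\omega\p_3$ appears in $\TT^{\alpha}$ with high multiplicity.

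For the three Hodge estimates, I would apply Lemma \ref{hodgeTT} directly to the vector fields $\eps^{4}\TT^{4}\p_t^k u$, $\eps^{4}\TT^{4}\p_t^k B$ (for $k=0,1$) and $\eps^{6}\TT^{6} u$, $\eps^{6}\TT^{6} B$. The boundary traces vanish modulo commutators: since $u_3|_\Sigma=B_3|_\Sigma=0$, the normal components $\TT^{\alpha}u_3$ and $\TT^{\alpha}B_3$ have vanishing trace whenever $\TT^{\alpha}$ contains no factor of $\omega\p_3$, and when it does, one uses \eqref{Com_p3_TP3} to rewrite $(\omega\p_3)^j$ against $\p_3$ and absorb the resulting commutators via the fundamental theorem of calculus as in the proof of Lemma \ref{curl H5}. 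The $\nabla\cdot B=0$ constraint removes the divergence of $B$ modulo $[\TT^{\alpha},\nab\cdot]B$, which is controlled via \eqref{Com_p3_TP3} by $E_4\sim E_6$ up to an $\eps$-factor.

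For the three vorticity estimates, I would take $\eps^{4}\p_t^k\TT^{4}\nab\times$ (resp.\ $\eps^{6}\TT^{6}\nab\times$) of the momentum equation to obtain
\begin{equation}
\rho D_t\bigl(\eps^{4}\p_t^k\TT^{4}\nab\times u\bigr)-(B\cdot\nab)\bigl(\eps^{4}\p_t^k\TT^{4}\nab\times B\bigr)=\RR,
\end{equation}
where $\RR$ gathers all the commutators generated by $[\rho D_t,\p_t^k\TT^{4}\nab\times]$ and $[B\cdot\nab,\p_t^k\TT^{4}\nab\times]$. Pairing in $L^{2}$ with $\eps^{4}\p_t^k\TT^{4}\nab\times u$, integrating $B\cdot\nab$ by parts, and then invoking $D_tB=B\cdot\nab u-B(\nab\cdot u)$ together with the continuity equation $\ffp D_tp+\nab\cdot u=0$ produces the standard time derivatives of $\|\eps^{4}\p_t^k\TT^{4}\nab\times u\|_0^{2}$ and $\|\eps^{4}\p_t^k\TT^{4}\nab\times B\|_0^{2}$, plus the problematic term $\mathcal{K}$ of the schematic form $\int \eps^{4}\p_t^k\TT^{4}\nab\times B\cdot \bigl(\eps^{4}\ffp\,\p_t^k\TT^{4}\nab\times(B D_tp)\bigr)\dx$. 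Using $\nabla\times$ against the momentum equation rewritten as $-\nab p=\rho D_tu+B\times(\nab\times B)$ and the vector identity $(\bd u\times\bd v)\cdot\bd w=-(\bd u\times\bd w)\cdot\bd v$, exactly as in \eqref{bad0}--\eqref{bad3}, splits $\mathcal{K}$ into three pieces: a term of the form $\rho B\times(\eps^{6}\p_t^k\TT^{4}D_t^{2}\p u)$ whose $L^{2}$-norm is dominated by $\|\eps^{6}\TT^{6}\p_t^k u\|_{1-k}$ (a quantity in $E_{7}$, respectively $\|\eps^{8}\TT^{8}u\|_0$ in $E_{8}$ at the top level); an energy term $-\tfrac12\ddt\int|\eps^{4}\ffp^{1/2}B\times\p_t^k\TT^{4}\nab\times B|^{2}\dx$ arising from the $B\times D_t(B\times\cdot)$ structure; and commutator remainders absorbed by $P(E_4,E_5,E_6)$.

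The main obstacle will be the commutator bookkeeping when $\TT^{\alpha}$ contains $\omega\p_3$. Two issues arise: first, $[\TT^{\alpha},\nabla\times]$ and $[\TT^{\alpha},\nabla\cdot]$ produce stray $\p_3$ factors that must be combined via \eqref{Com_p3_TP3} and the fundamental theorem of calculus (using $u_3|_\Sigma=B_3|_\Sigma=0$) into an $\omega\p_3$-derivative plus lower-order pieces; second, $[D_t,\TT^{\alpha}]$ and $[B\cdot\nab,\TT^{\alpha}]$ when $j=3$ generate $\p_3u_3$ or $\p_3 B_3$ multiplied by a top-order derivative, and at this derivative count the weight $\eps^{4}$ (resp.\ $\eps^{6}$) must be distributed to ensure each factor is controlled. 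The expression $D_t(B\times\p_t^k\TT^{4}(\nabla\times B))$ in $\mathcal{K}_1$ must also be expanded as $D_t(B\times\p_t^k(\nabla\times\TT^{4}B))+D_t(B\times[\TT^{4},\nabla\times]\p_t^k B)$, with the second piece bounded by $E_{6}$ (resp.\ $E_{7}$) after assigning the full weight. Once these routine but tedious estimates are verified, each stated inequality follows line by line, closing the chain that connects $E_{4}$ to $E_{8}$ as summarized in diagram \eqref{diagram1}.
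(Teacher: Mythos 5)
Your proposal is correct and follows essentially the same route as the paper, which itself proves this lemma only by indicating that one repeats the arguments of Lemmas \ref{curl H4} and \ref{curl H5} at higher derivative count with the weights $\eps^4$ and $\eps^6$: Hodge decomposition with vanishing normal traces, reduction of the divergence via the continuity equation and \eqref{Com_p3_TP3}, and the curl estimate in which the problematic term $\ffp\,\TT^{\alpha}\nab\times(BD_tp)$ is resolved through $-\nab p=\rho D_tu+B\times(\nab\times B)$ and the triple-product identity into a next-level energy contribution plus the $\ddt$ of the $\ffp^{1/2}B\times(\nab\times\cdot)$ term. Your handling of the $\omega\p_3$ commutators and of the splitting $D_t(B\times\TT^{4}(\nab\times B))=D_t(B\times\nab\times\TT^{4}B)+D_t(B\times[\TT^{4},\nab\times]B)$ matches the paper's treatment in Lemma \ref{curl H5}.
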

		Besides, the divergence part is reduced to the estimates of $p$ by using the continuity equation
		\begin{align}
			\|\eps^{4}\p_t^k\nab\cdot \TT^4 u\|_{1-k}^2{\lesssim}&\left\|\eps^{6} \p_t^k\TT^4( D_t p )\right\|_{1-k}^2 + \|\eps^{4}[\TT^4,\nab\cdot]\p_t^k u\|_{1-k}^2, ~~~k=0,1,\notag\\
			\|\eps^{6}\nab\cdot \TT^6 u\|_{0}^2{\lesssim}&\left\|\eps^{8} \TT^6( D_t p )\right\|_{0}^2 + \|\eps^{6}[\TT^6,\nab\cdot] u\|_{0}^2,
		\end{align}
		where the commutators can be controlled
		\begin{align}
			\sum_{k=0,1}\|\eps^{4}[\TT^4,\nab\cdot]\p_t^k u\|_{1-k}^2 {\lesssim} &~\eps^4 E_5(t) \lesssim \eps^4 E_5(0)+\int_0^t E_6(\tau)\mathrm{d}\tau,\\
                     \|\eps^{6}[\TT^6,\nab\cdot] u\|_{0}^2 {\lesssim} &~ \eps^4 E_6(t)\lesssim \eps^4 E_6(0)+\int_0^t E_7(\tau)\mathrm{d}\tau.
		\end{align}
		by using the identity \eqref{Com_p3_TP3}. Moreover, $\left\|\eps^{6} \p_t^k\TT^4( D_t p )\right\|_{1-k}^2$ and $\left\|\eps^{8} \TT^6( D_t p )\right\|_{0}^2$ will be further reduced to the tangenital estimates of $(u,B)$ by using the argument in Section \ref{sect reduction q}.

		\subsubsection{The remaining tangential estimates}\label{sect tg list}
		
		Recall that the entropy $S$ is directly controlled via $D_t S=0$, so we can temporarily ignore that. Let us summarize what kind of tangential derivatives we shall control in order to close the estimates that are uniform in Mach number $\eps$. Again we start with $\|\eps^{(k-1)_+}(\p_t^ku,\p_t^kB)\|_{4-k}$ in $E_4(t)$. We set such Mach number weights based on the following facts:
\begin{itemize}
\item [a.] $H^4$ regularity: Several commutator estimates require the bound for $\|\TP^2(u,B)\|_{L^{\infty}}$. Recall that $H^{\frac{d}{2}+\delta}\hookrightarrow L^{\infty}$, so it would be convenient to choose $H^{2+\lceil\frac{d}{2}+\delta\rceil}$ regularity, that is, $H^4$ for $d=2,3$.
\item [b.] The initial data is well-prepared, so $\|\nab p(0)\|_3$ should be bounded with respect to $\eps$ and we shall add Mach number weights to $\p_t^kp$ when $k\geq 1$.
\item [c.] The $L^2$ estimate in section \ref{sect L2} suggests that $u,B,\eps p$ should share the same weights of Mach number $\eps$ because taking tagential derivatives preserves the boundary conditions.
\item [d.] The reduction procedure in section \ref{sect reduction q} shows that $\nab p$ is converted to $\TT(u,B)$ and $\nab B$. When $\TT$ is a spatial derivative, $\TT(u,B)$ and $\nab B$ should be controlled via div-curl analysis as shown in Lemma \ref{curl H4}. When $\TT=\p_t$, we find that $\p_t^k p,~\p_t^{k+1}(u,B)$ should share the same weights of Mach number.
\end{itemize}
Thus, $E_4(t)$ consists of the following quantities.
\begin{align*}
\|u,B,S,p\|_4,~\|\p_tu,\p_tB,\p_tS,\eps\p_t p\|_3,~\|\eps\p_t^2 u,\eps\p_t^2 B,\eps\p_t^2S,\eps^2\p_t^2p\|_2,\\
\|\eps^2\p_t^3 u,\eps^2\p_t^3 B,\eps^2\p_t^3S,\eps^3\p_t^3p\|_1,~\|\eps^3\p_t^4 u,\eps^3\p_t^4 B,\eps^3\p_t^4S,\eps^4\p_t^4p\|_0.
\end{align*}

The Sobolev norms of $\|\eps^{(k-1)_+}(\p_t^ku,\p_t^kB)\|_{4-k}$ for $0\leq k\leq 3$ are controlled via div-curl analysis. The divergence part can be absorbed by $E_4(t)$ itself because the continuity equation produces extra $\eps^2$ weight and $\eps^2<\eps$ since we assume the Mach number is small. Even if we do not assume $\eps$ is small, we can alternatively control the divergence by repeatedly using the reduction of pressure such that there are only tangential derivatives. The curl part, thanks to the special structure of Lorentz force, is reduced to $\|\eps^{(k-1)_++2}D_t^2\p_t^ku\|_{3-k}$ which is a part of $E_5(t)$. So, it remains to control $E_5(t)$ and the full time derivatives, namely $\|\eps^3\p_t^4 u,\eps^3\p_t^4 B,\eps^4\p_t^4p\|_0$.  See also diagram \eqref{diagram2}.

Then $E_5(t)$ is designed in the same manner as $E_4(t)$:
\begin{align*}
&\|\eps^2\TT^2(u,B,S,p)\|_3~(\text{produced from the vorticity analysis of }\|u,B\|_4),\\
&\|\eps^2\TT^2(\p_tu,\p_tB,\p_tS,\eps\p_tp)\|_2~(\text{produced from the vorticity analysis of }\|\p_tu,\p_tB\|_3),\\
&\|\eps^2\TT^2(\eps\p_t^2u,\eps\p_t^2B,\eps\p_t^2S,\eps^2\p_t^2p)\|_1~(\text{produced from the vorticity analysis of }\|\eps\p_t^2u,\eps\p_t^2B\|_2),\\
&\|\eps^2\TT^2(\eps^2\p_t^3u,\eps^2\p_t^3B,\eps^2\p_t^3S,\eps^3\p_t^3p)\|_0~(\text{produced from the vorticity analysis of }\|\eps^2\p_t^3 u,\eps^2\p_t^3 B\|_1).
\end{align*}

The div-curl analysis of $E_5(t)$ produces $\|\eps^{(k-1)_++4}\TT^2 D_t^2\p_t^ku\|_{2-k}$ which is part of $E_6(t)$. In order to close the energy estimates, we now need to control $E_6(t)$ and also the full time derivatives $\|\eps^2\TT^2(\eps\p_t^3u,\eps\p_t^3B,\eps^2\p_t^3p)\|_0$.

Repeat the above procedure once more, we can see $E_6(t)$ should be designed as
\begin{align*}
&\|\eps^4\TT^4(u,B,S,p)\|_2~(\text{produced from the vorticity analysis of }\|\eps^2\TT^2(u,B)\|_3),\\
&\|\eps^4\TT^4(\p_tu,\p_tB,\p_tS,\eps\p_tp)\|_1~(\text{produced from the vorticity analysis of }\|\eps^2\TT^2(\p_tu,\p_tB)\|_2,\\
&\|\eps^4\TT^4(\eps\p_t^2u,\eps\p_t^2B,\eps\p_t^2S,\eps^2\p_t^2p)\|_0~(\text{produced from the vorticity analysis of }\|\eps^2\TT^2(\eps\p_t^2u,\eps\p_t^2B)\|_1).
\end{align*}	Combining the result of vorticity estimate, it remains to control $\|\eps^6\TT^4 D_t^2\p_t^ku\|_{1-k}$ (part of $E_7(t)$) and also the full time derivatives $\|\eps^4\TT^4(\eps\p_t^2u,\eps\p_t^2B,\eps\p_t^2S,\eps^2\p_t^2p)\|_0$.

Repeat the above procedure once more, we can see $E_7(t)$ should be designed as
\begin{align*}
&\|\eps^6\TT^6(u,B,S,p)\|_1~(\text{produced from the vorticity analysis of }\|\eps^4\TT^4(u,B)\|_2),\\
&\|\eps^6\TT^6(\p_tu,\p_tB,\p_tS,\eps\p_tp)\|_0~(\text{produced from the vorticity analysis of }\|\eps^4\TT^4(\p_tu,\p_tB)\|_1\text{ and {(c)}}.
\end{align*}	 

\noindent\textbf{Discussion about the weights of $p$.} For the pressure $p$, we only have the control of $\|\eps^{k+2l}\TT^\alpha\p_t^k p\|_{4-k-l}$ at this step, which has one more $\eps$-weight than what we defined in \eqref{energy intro} when $k\geq 1,~l\geq 1$ and $\alpha_0<2l$. To replace $\eps^{k+2l}$ by $\eps^{(k-1)_++2l}$ when $l\geq 1$ and $\alpha_0<2l$, we just need to notice that $\TT^{\alpha}$ now must contain at least one tangential spatial derivative, and then we can use the momentum equation \[
-\p_i p =B_k\p_i B_k +\rho D_t u_i-(B\cdot\nab) B_i,~-\omega\p_3 p=B_k\underbrace{\omega\p_3 B_k}_{\text{tangential}}+\omega\rho D_t u_3-(\omega B\cdot\nab) B_3
\]to reduce the control of $\|\eps^{(k-1)_++2l}\TT^\alpha\p_t^kp\|_{4-k-l}$ ($\lee\alpha\ree=2l,~\alpha_0<2l$) to $\|\eps^{(k-1)_++2l}\TT^\beta\p_t^k(u,B)\|_{4-k-l}$ for some $\beta$ satisfying $\lee\beta\ree=2l$ and $\alpha_0\leq \beta_0\leq \alpha_0+1$, which has been included in $E_{4+l}(t)$.

Combining the result of vorticity estimate, it remains to control $\|\eps^6\TT^\alpha(\p_tu,\p_tB,\p_tp)\|_0$ for $\lee\alpha\ree=6,~\alpha_0<6$, $\|\eps^6(\p_t^6u,\p_t^6B,\eps\p_t^7p)\|_0 $ and $\|\eps^8\TT^{\alpha}D_t^2 u\|_0$ for $\lee\alpha\ree=6$. This indicates us to define $E_8(t)$ to be\[
E_8(t)=\sum_{\lee\alpha\ree=8,\alpha_0<8}\left\|\eps^8 \TT^{\alpha}(u,B,S,p)\right\|_{0}^2+\left\|\eps^8 \p_t^8(u,B,S,\eps p)\right\|_{0}^2.
\]

To sum up, the remaining estimates are all tangential estimates, which can be proved by analyzing the $L^2$ estimates after taking the following tangential derivatives to MHD system \eqref{CMHD3}
\begin{align}\label{tg list}
\eps^3\p_t^4,~\eps^4\TT^2\p_t^3,~\eps^5\TT^4\p_t^2,~\eps^6\TT^6\p_t,~\eps^8\TT^8.
\end{align}

		\subsection{Tangential estimates}\label{sect tg}

		\subsubsection{The $\TT^{\alpha}$-differentiated equations}\label{sect tg eq}
		By the div-curl analysis, the crucial step is to study the higher order tangential energy estimate of \eqref{CMHD3}. In particular, we define the following tangential derivatives
		\begin{equation}
			\TT_0=\p_t,\quad \TT_1 = \p_1, \quad \TT_2=\p_2, \quad \TT_4=\omega(x_3) \p_3.
		\end{equation} 
		We take $\TT^{\alpha}$ in the momentum equation $\rho D_t u=-\nab p - B\times(\nab\times B) $ to get 
		\begin{equation}
			\rho D_t(\TT^\alpha u)-(B\cdot\nab)(\TT^{\alpha} B)+ \TT^{\alpha} \nab(p+|B|^2/2)=[\rho D_t, \TT^{\alpha}]u - [B\cdot\nab,\TT^{\alpha}]B,
		\end{equation}
		where $\TT^{\alpha}:=\TT_0^{\alpha_0}\TT_1^{\alpha_1}\TT_2^{\alpha_2}\TT_4^{\alpha_4}$ and $\lee\alpha\ree=\alpha_0+\alpha_1+\alpha_2+0\times2+\alpha_4\le 8$. Accoding to section \ref{sect tg list}, it suffices to consider the following cases
\begin{itemize}
\item $\lee\alpha\ree=\alpha_0=4$ with $\eps^3$ weight,
\item $\lee\alpha\ree=5,~\alpha_0\geq 3$ with $\eps^4$ weight,
\item $\lee\alpha\ree=6,~\alpha_0\geq 2$ with $\eps^5$ weight,
\item $\lee\alpha\ree=7,~\alpha_0\geq 1$ with $\eps^6$ weight,
\item $\lee\alpha\ree=8$ with $\eps^8$ weight.
\end{itemize}

Since these derivatives are all tangential, taking either of them still preserves the boundary conditions. For simplicity, we will only show the details of the following cases (either full spatial derivatives or full time derivatives)
\[
\eps^8\TT^{\alpha}\text{ with }\lee\alpha\ree=8\text{ and }\alpha_0=0,~~\eps^8\p_t^8,~~\eps^{k-1}\p_t^{k}~(4\leq k\leq 7)
\] and the other cases (space-time mixed derivatives) can be proven in the same manner.

		\subsubsection{Tangential energy estimate with full spatial derivatives}\label{sect TT AGU}
		In this subsection we study the spatially-differentiated equations, i.e., the equations obtained by commuting $\TT^\alpha$, $\alpha_0=0$, and $\lee\alpha\ree=8$, with \eqref{CMHD3}. We aim to prove the following estimate
		\begin{prop}\label{tgs8}
			For $\TT^{\alpha}$ with multi-index $\alpha$ satisfying $\alpha_0=0$ and $\lee\alpha\ree=8$, we have the energy inequality:
			\begin{equation}\label{TT spatial 4}
				\sum_{\lee\alpha\ree=8,\alpha_0=0}\ddt \left\|\eps^{8}\TT^{\alpha} (u, B, p)\right\|_0^2 \le P(E(t)).
			\end{equation}
		\end{prop}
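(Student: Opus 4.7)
The plan is to mimic the $L^2$ energy conservation argument of Section \ref{sect L2} on the $\TT^\alpha$-differentiated system and exploit the MHD cancellation structure to isolate $\ddt\|\eps^8 \TT^\alpha(u,B,p)\|_0^2$ up to commutators and lower-order terms. Concretely, applying $\TT^\alpha$ (with $\lee\alpha\ree=8$, $\alpha_0=0$) to the momentum, magnetic and continuity equations in \eqref{CMHD3} yields
\begin{align*}
\rho D_t(\TT^\alpha u)-(B\cdot\nab)(\TT^\alpha B)+\nab(\TT^\alpha P) &=\mathcal{C}_u, \\
D_t(\TT^\alpha B)-(B\cdot\nab)(\TT^\alpha u)+B\,(\nab\!\cdot\!\TT^\alpha u) &=\mathcal{C}_B, \\
\ffp D_t(\TT^\alpha p)+\nab\!\cdot\!(\TT^\alpha u) &=\mathcal{C}_p,
\end{align*}
where $\mathcal{C}_u,\mathcal{C}_B,\mathcal{C}_p$ collect the commutators $[\rho D_t,\TT^\alpha]$, $[B\cdot\nab,\TT^\alpha]$, $[\nab,\TT^\alpha]$, $[\ffp D_t,\TT^\alpha]$ acting on the various fields.

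Next, I would test the three equations against $\eps^{16}\TT^\alpha u$, $\eps^{16}\TT^\alpha B$, and $\eps^{16}\TT^\alpha p$, and integrate over $\Omega$. Using $\nab\cdot B=0$, the term $-\int_{\Om}(B\cdot\nab)(\TT^\alpha B)\cdot\TT^\alpha u\dx$ integrates by parts to $+\int_{\Om}(B\cdot\nab)(\TT^\alpha u)\cdot\TT^\alpha B\dx$, which cancels the analogous cross-term from the magnetic equation. Splitting $\TT^\alpha P=\TT^\alpha p+\TT^\alpha(|B|^2/2)$, the leading piece $B\cdot \TT^\alpha B$ of $\TT^\alpha(|B|^2/2)$ pairs with $\nab\!\cdot\!\TT^\alpha u$ and cancels the $B\,(\nab\!\cdot\!\TT^\alpha u)$ contribution from the magnetic equation, while the remaining $-\int_{\Om}\TT^\alpha p\,\nab\!\cdot\!\TT^\alpha u\dx$ combines with the continuity test to produce the time derivative of $\|\eps^8\sqrt{\ffp}\,\TT^\alpha p\|_0^2$ (modulo $\p_t\ffp$ and $\nab\!\cdot\!u$ contributions that are harmless by \eqref{ff property}). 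All boundary integrals arising from integration by parts vanish: periodicity in $x_1,x_2$ handles those directions, and in the $x_3$ direction every factor of $\TT_4=\omega\p_3$ brings the weight $\omega|_{\Sigma}=0$, while the only other IBP in $x_3$ forces $u_3$ or $B_3$ to appear on $\Sigma$, where they vanish. The outcome is the identity
\[
\frac12\ddt\io\eps^{16}\bigl(\rho|\TT^\alpha u|^2+|\TT^\alpha B|^2+\ffp|\TT^\alpha p|^2\bigr)\dx=\eps^{16}\io\bigl(\mathcal{C}_u\cdot\TT^\alpha u+\mathcal{C}_B\cdot\TT^\alpha B+\mathcal{C}_p\,\TT^\alpha p\bigr)\dx+\text{l.o.t.}
\]

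The main obstacle is bounding the commutator integrals by $P(E(t))$. The leading contributions to $[\rho D_t,\TT^\alpha]u$ and $[B\cdot\nab,\TT^\alpha]B$ are schematically $\TT^\beta(\rho u)\,\nab\TT^{\alpha-\beta}u$ and $\TT^\beta B\,\nab\TT^{\alpha-\beta}B$; the dangerous case is when $\nab$ acts in the $x_3$ direction on a top-order tangential quantity. This is resolved by the same mechanism used throughout Section \ref{sect div curl}: writing $u_3=\omega\tilde u_3$ and $B_3=\omega\tilde B_3$ via the fundamental theorem of calculus (since $u_3|_{\Sigma}=B_3|_{\Sigma}=0$), so that $u_3\p_3$ and $B_3\p_3$ reduce to bounded multiples of $\TT_4$. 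The secondary commutators produced by $[\TT_4^{\alpha_4},\p_i]$, via the identity \eqref{Com_p3_TP3}, generate one normal derivative but are accompanied by at most $\lee\alpha\ree-1=7$ tangential derivatives; placing one factor in $L^\infty$ via the $H^4\hookrightarrow L^\infty$ embedding already built into $E_4(t)$, and putting the other in $L^2$ inside $E_7(t)$, every such term is controlled by $P(E(t))$ once the $\eps^8$ prefactor is inserted. The $\mathcal{C}_p$ commutator and the lower-order terms generated by $\p_t\rho$, $\p_t\ffp$, and derivatives falling on $\ffp$ are handled analogously using the smoothness of the equation of state and \eqref{ff property}. Summing over the finitely many multi-indices with $\alpha_0=0$, $\lee\alpha\ree=8$ yields \eqref{TT spatial 4}.
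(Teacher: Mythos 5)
Your overall strategy---a tangential energy estimate mimicking the $L^2$ conservation, with the $(B\cdot\nab)$ cross-term cancellation and the pairing of $B\cdot\TT^\alpha B$ against $\nab\cdot\TT^\alpha u$---is the same as the paper's, and the fundamental-theorem-of-calculus mechanism you invoke is indeed how the paper treats the transport-type commutators $[u\cdot\nab,\TT^\alpha]$ and $[B\cdot\nab,\TT^\alpha]$, whose coefficients of $\p_3$ are $u_3$, $B_3$ or their tangential derivatives, all vanishing on $\Sigma$. There is, however, a genuine gap in your treatment of the commutator coming from the compression term $B(\nab\cdot u)$ in the induction equation (you do not list $[B\,\nab\cdot,\TT^\alpha]u$ among your commutators, but it sits inside $\mathcal{C}_B$). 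After the leading cancellation you describe, the sub-leading term is schematically $\TT B_i\,\TT^{7}\p_j u_j$; for $j=3$ this is $\TT B_i\,\TT^{7}\p_3 u_3$, where the coefficient $\TT B_i$ does \emph{not} vanish on $\Sigma$ (it is not the normal component of anything), so the weight $\omega(x_3)$ cannot be manufactured. The quantity $\eps^{8}\TT^{7}\p_3 u_3$ has anisotropic order $9$ and is not contained in any $E_{4+l}(t)$. The paper's resolution is to substitute the continuity equation $-\p_3 u_3=\ffp D_t p+\cnab\cdot\bar{u}$, which converts the normal derivative into tangential ones with an extra $\eps^2$; your proposal never invokes this substitution. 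The same objection applies to your claim that the secondary commutators $[\TT_4^{\alpha_4},\p_i]$ land in $E_7(t)$: terms such as $\eps^{8}\,\p_3\omega\,\p_3\TP^{7}P$ and $\eps^{8}\,\p_3\omega\,\p_3\TP^{7}u_3$ carry $7$ tangential derivatives plus one normal derivative, which $E_7(t)$ does not reach (it only controls $\eps^{6}\p\,\TT^{6}$); the paper again substitutes the momentum equation $-\p_3 P=\rho D_t u_3-(B\cdot\nab)B_3$ and the continuity equation to trade the normal derivative for tangential ones.

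A second, smaller gap: your energy identity only produces $\ddt\|\eps^{8}\sqrt{\ffp}\,\TT^\alpha p\|_0^2$, whereas the proposition asserts control of the unweighted $\|\eps^{8}\TT^\alpha p\|_0$. The paper recovers this at the end by invoking the momentum equation $-\TP_i p=B\cdot\TP_i B+\rho D_t u_i-(B\cdot\nab)B_i$ to convert $\TT^\alpha p$ into $\TT^{\alpha-1}\TT(u,B)$, which lies in $E_8(t)$. Without this step (or an equivalent one) your argument proves a strictly weaker statement than the one claimed.
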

		\begin{proof}We first consider the case of $\alpha_4=0$, that is, $\TT^\alpha=\TP_1^{\alpha_1}\TP_2^{\alpha_2}$ with $\alpha_1+\alpha_2=8$. To simplify our notation, we will write $\TP^8$ to represent such $\TT^\alpha$.
		\begin{align}\label{tgs8-0}
			&\frac{1}{2}\ddt\int_{\Om}\rho|\eps^{8}\TP^8 u|^2 \dx\notag\\
			=& \int_{\Om} \eps^{16}\rho\TP^{8}\p_t u\cdot\TP^{8}u \dx +\frac{1}{2}\int_{\Om}\p_t \rho |\eps^{8}\TP^{8} u|^2 \dx \notag\\
			=&\int_{\Om}\eps^{8}\TP^{8}(\rho D_t u) \cdot ( \eps^{8}\TP^{8}u) \dx \underbrace{- \int_{\Om}\eps^{16} \left(\rho \TP^{8}(u\cdot\nab u)  +[\TP^{8},\rho]D_t u + (\frac{\p\rho}{\p p}\p_t p+\p_{S}\rho\p_t S )\TP^{8}u\right)\cdot\TP^{8}u \dx} _{:=\mathcal{J}_1}.
		\end{align}

The first integral above, after invoking the momentum equation and integrating $B\cdot\nab$ by parts, is equal to
		\begin{align}\label{tgs8-1}
\int_{\Om}\eps^{8}\TP^{8}(B\cdot\nab B) \cdot(\eps^{8}\TP^{8}u ) \dx - \int_{\Om}\eps^{8}\TP^{8}(\nab(p+ |B|^2/2)) \cdot(\eps^{8}\TP^{4}u ) \dx.
		\end{align}

For the first term, we integrate $B\cdot\nab$ by parts
		\begin{align}\label{tgs8-2}
			&\int_{\Om}\eps^{8}\TP^{8}(B\cdot\nab B) \cdot(\eps^{8}\TP^{8}u ) \dx \notag\\
			\overset{B\cdot\nab}{=}&-\int_{\Om}\eps^{8}\TP^{8} B \cdot (\eps^{8}  \TP^{8}(B\cdot\nab u)) \dx \underbrace{+\int_{\Om}\eps^{8} [\TP^{8},B\cdot\nab]B \cdot(\eps^{8}\TP^{8}u) \dx-\int_{\Om}\eps^{8}\TP^{8} B \cdot (\eps^{8} [B\cdot \nab,\TP^{8}]u) \dx}_{:=\mathcal{J}_2}.
		\end{align}

For the second term, we integrate by parts and invoke the continuity equation to get
		\begin{align}\label{tgs8-3}
			&- \int_{\Om}\eps^{8}\TP^{8}(\nab(p+ |B|^2/2)) \cdot(\eps^{8}\TP^{8}u ) \dx=\int_{\Om}\eps^{8}\TP^{8}(p+ |B|^2/2)\cdot ( \eps^{8} \TP^{8} \nab\cdot u) \dx\notag\\
			=&\int_{\Om}\eps^{8}\TP^{8}(|B|^2/2)\cdot ( \eps^{8} \TP^{8} \nab\cdot u) \dx - \int_{\Om}\eps^{8}\TP^{8}p \cdot ( \eps^{8} \TP^{8} (\eps^2 D_t p)) \dx,
		\end{align} where the boundary intergral on $\Sigma$ vanishes thanks to the boundary condition $u_3=B_3=0$ on $\Sigma$.

The first integral in \eqref{tgs8-2}, after inserting the evolution equation of $B$, is equal to
		\begin{align}\label{tgs8-4}
			&-\frac{1}{2}\ddt\int_{\Om}|\eps^{8}\TP^{8}B|^2 \dx \underbrace{-\int_{\Om}\eps^{8}\TP^{8}B\cdot( (u\cdot\nab)\TP^{8} B + [\TP^{8},u\cdot\nab ]B ) \dx }_{:=\mathcal{J}_3} \notag\\
			&\underbrace{-\int_{\Om}\eps^{8} \TP^{8}B \cdot (\eps^{8} \TP^{8}(B\nab\cdot u)) \dx }_{:=\mathcal{K}_1} ,
		\end{align}which gives the energy of $B$. The last term on the right side of \eqref{tgs8-3} gives the energy of $\eps p$
		\begin{align}\label{tgs8-5}
			&- \int_{\Om}\eps^{8}\TP^{8}p \cdot ( \eps^{8} \TP^{8} ({\ffp} D_t p)) \dx \notag\\
			{\lleq}&-\frac{1}{2}\ddt\int_{\Om}\left|\eps^{8}{\ffp^{\frac12}}\TP^{8}p\right|^2 \dx \underbrace{-\int_{\Om}\eps^{8}\TP^{8} p \cdot ( \eps^8{\ffp}[\TP^{8},u\cdot\nab]p  ) \dx}_{:=\mathcal{J}_4}.
		\end{align}
		Then we analyze the first term on the right side of \eqref{tgs8-3}. Since $\TP(\frac12|B|^2)=\TP B\cdot B$, we have
		\begin{align}\label{tgs8-6}
			&\int_{\Om}\eps^{8}\TP^{8}(|B|^2/2)\cdot ( \eps^{8} \TP^{8} \nab\cdot u) \dx\notag\\
			=&\underbrace{\int_{\Om}(\eps^{8} B)\cdot (\TP^{8}B)  ( \eps^{8} \TP^{8} \nab\cdot u) \dx }_{:=\mathcal{K}_2} +\underbrace{ \sum_{0<\lee\alpha\ree<8}\int_{\Om}\eps^{8}C_{\alpha} (\TP^{\alpha}B) \cdot (\TP^{8-\alpha}B)  ( \eps^{8} \TP^{8} \nab\cdot u) \dx}_{:=\mathcal{J}_5} 
		\end{align}
			 where $C_{\alpha}$ are some positive constants. Now let us control the commutators $\mathcal{J}_1\sim\mathcal{J}_5$. For $\mathcal{J}_1\sim \mathcal{J}_4$, since  $u_3=B_3=0$ on $\Sigma$, it suffices to analyze one of the following two types of commutators
			\[ \eps^8[\TP^8,u\cdot\nab] f,~~\eps^8[\TP^8,B\cdot\nab ]f.  \]  For example. we expand the first one to find that
		\begin{align*}
			\eps^8[\TP^8,u\cdot\nab] f =\eps^8\TP^8 u\cdot\nab f +\sum_{j=1,2}\eps^8(\TP \bar{u}_j)(\cnab_j\TP^{7} f)+\eps^8(\TP u_3)(\p_3\TP^{7} f)+\sum_{k=2}^{7}\binom{8}{k}\eps^8\TP^{k}u_j\p_j\TP^{8-j}f,
		\end{align*}and it is easy to see that the last term is directly controlled by $E(t)$. For the first term, we have
		\[
			\|\eps^8\TP^8 u\cdot\nab f +\eps^8(\TP\bar{u})\cdot(\cnab\TP^{7} f)\|_0\leq \|\eps^8\TP^8 u\|_0\|\nab f\|_{\infty}+\|\TP u\|_{L^{\infty}}\|\eps^8\TP^8 f\|_{0},
		\]which is directly controlled by $E(t)$. For the third term, note that $\TP u_3|_{\Sigma}=0$, using fundamental theorem of calculus, we have 
		\[
			|\TP u_3(x_3)|\lesssim\omega(x_3)\|\TP\p_3u_3\|_{L^{\infty}},
		\]and thus
		\[
			\|\eps^8(\TP u_3)(\p_3\TP^{7} f)\|_0\lesssim\|\TP\p_3u_3\|_{L^{\infty}}\|\eps^8(\omega\p_3)\TP^7 f\|_0
		\]which is again controlled by $E(t)$. As for $\mathcal{J}_5$, we integrate $\TP$ by parts to get
		\begin{align*}
			\mathcal{J}_5\lesssim \sum_{0<\lee\alpha\ree<8}\|\eps^8\TP^{\alpha+1}B\cdot\TP^{8-\alpha} B)\|_0\|\eps^8\TP^7(\nab\cdot u)\|_0\lesssim E(t)\|\eps^{10}\TP^7 D_t p\|_0\leq  	P(E(t)).
		\end{align*}
		Thus, all the commutators $\mathcal{J}_i~(i=1,\cdots,5)$ are directly controlled
		\begin{align}\label{tgs8-7}
	    	\mathcal{J}_1 +\mathcal{J}_2+\mathcal{J}_3+\mathcal{J}_4+\mathcal{J}_5 \le P(E(t)).
		\end{align}

        		Next, we show that $\mathcal{K}_1$ has cancellation with $\mathcal{K}_2$. We have
	        \begin{align}\label{tgs8-8}
	        	\mathcal{K}_1= & -\sum_{i,j=1}^{3}\int_{\Om} \eps^{8}\TP^{8}B_i \cdot (\eps^8 \TP^{8} (B_i \p_j u_j)) \dx\notag\\
	        	=& \underbrace{-\int_{\Om}\eps^{8} B\cdot \TP^{8}B ( \eps^{8} \TP^{8} \nab\cdot u) \dx}_{=-\mathcal{K}_2} -\sum_{i,j=1}^{3}\int_{\Om} \eps^{8}\TP^{8}B_i \cdot (\eps^8 \TP B_i \TP^{7}\p_j u_j) \dx \notag\\
		&\underbrace{-\sum_{i,j=1}^{3}\sum_{2\leq\lee\alpha\ree\le 8}\int_{\Om} \eps^{8}\TP^{8}B_i \cdot (\eps^8 C_{\alpha} \TP^{\alpha}B_i \TP^{8-\alpha} \p_j u_j) \dx}_{\mathcal{J}_6}.
		\end{align}
		The last term $\mathcal{J}_6$ is again directly controlled: $\mathcal{J}_6 \le P(E(t))$.  The first term cancels with $\mathcal{K}_2$. For the second term, we need to further analyze the case when $j=3$. Thanks to the continuity equation, we have $-\p_3 u_3= {\ffp}D_t p +  \cnab\cdot\bar{u}$ and thus 
		\begin{align}
		&-\sum_{i,j=1}^{3}\int_{\Om} \eps^{8}\TP^{8}B_i \cdot (\eps^8 \TP B_i \TP^{7}\p_3 u_3) \dx=\sum_{i=1}^{3}\int_{\Om} \eps^{8}\TP^{8}B_i \cdot \left(\eps^8 \TP B_i \TP^{7}({\ffp} D_t p +  \cnab\cdot\bar{u})\right)\dx \lesssim P(E(t)).
		\end{align}
		When $j=1,2$, the second term is again directly controlled by $P(E(t))$ because all derivatives are tangential.
		\begin{align}
	        	-\sum_{i=1}^{3}\sum_{j=1,2}\int_{\Om} \eps^{8}\TP^{8}B_i \cdot (\eps^8\TP B_i \TP^{7}\TP_j \bar{u}_j) \dx\leq P(E(t)).
	        \end{align}
		Thus, we conclude that
		\begin{align}\label{tgs8-9}
		    \mathcal{K}_1+\mathcal{K}_2\le P(E(t)).
		\end{align}
		Combining \eqref{tgs8-1}-\eqref{tgs8-9}, we conclude that
		\begin{align}\label{tgs8-10}
			\ddt \left\|\eps^{8}\TP^{8} ({\ffp^{\frac12}} p, u, B)\right\|_0^2 \le P(E(t)).
		\end{align}
		To control $\|\eps^8\TP^8 p\|$, it suffices to invoke the momentum equation $-\TP_i p= B\cdot(\TP_i B)+\rho D_t\bar{u}_i-(B\cdot\nab)B$ and thus we convert $\eps^8\TP^8 p$ to $\eps^8\TP^7\TT(u,B)$ which is part of $E_8(t)$. We then conclude that
		\begin{align}\label{tgs8-11}
			\left\|\eps^{8}\TP^{8} (p, u, B)(t)\right\|_0^2 \le E_8(0)+\int_0^t P(E(\tau))\mathrm{d}{\tau}.
		\end{align}
		
		For the case of $\alpha_4\ge 1$, since $[\p_3,\omega\p_3]\neq0$, we need to reconsider the estimates of commutators. For the commutators of type $[\TT^{\alpha},\p_3](\cdot)$, we can use identity \eqref{Com_p3_TP3}. For example, we need to control the extra terms when commuting $\TT^\alpha$ with $\nab$:
\[
-\io(\eps^{8}\TT^{\alpha}u_i)(\eps^8[\TT^{\alpha},\p_i] P)\dx\text{ and }\io(\eps^{8}[\p_i,\TT^{\alpha}]u_i)(\eps^8\TT^{\alpha} P)\dx.
\] Without loss of generality, we assume $\TT^{\alpha}=(\omega(x_3)\p_3)\TP^7$. In the first integral above, the highest-order term should be
\[ 
-\io(\eps^{8}\TT^{\alpha}u_3)(\eps^8~\p_3\omega~\p_3\TP^7 P)\dx.
\] Now we invoke the momentum equation to replace $-\p_3 P$ with $\rho D_t u_3-(B\cdot\nab)B_3$. Recall that $D_t$ and $B\cdot\nabla$ are both tangential derivatives thanks to $u_3|_{\Sigma}=B_3|_{\Sigma}=0$. So the above integral is controlled by $\|\eps^8\TT^{\alpha}u\|_0(\|\eps^8\TT^{\alpha'}u\|_0+\|\eps^8\TT^{\alpha''}B\|_0)$ plus lower-order terms where $\lee\alpha'\ree=\lee\alpha''\ree=8$. Similarly, for the second integral above, one can invoke the continuity equation, which reads $\p_3u_3=-\cnab\cdot\bar{u}-\eps^2 D_t p$, to finish the control. A slight difference is that we do not have the energy for $\|\eps^8\TT^{\alpha}p\|_0$. Luckily, at this step, $\TT^{\alpha}$ must contain a spatial derivative $\omega(x_3)\p_3$. So, it suffices to again invoke the momentum equation \[ -\omega\p_3 p= \underbrace{\omega\p_3}_{\text{tangential}}(\frac12|B|^2)+\omega\rho D_t u_3-\omega(B\cdot\nabla B_3)\] such that all terms that contain the fluid pressure are replaced by the velocity and the magnetic field.

		As for the other commutators, for example, we re-consider $\eps^{8}[\omega\p_3\TP^{7}, B\cdot\nab]B$ in the analogue of $\mathcal{J}_2$ (W.L.O.G. $\alpha_4=1$). We have
		\begin{align}
			[\omega\p_3\TP^{7}, B\cdot \nab]B =&~[\omega\p_3\TP^{7},B_3\p_3]B +[\omega\p_3\TP^{7},\bar{B}\cdot\cnab]B \notag\\
			=&~\omega\p_3\TP^{7}(B_3\p_3 B)- B_3\p_3( \omega\p_3\TP^{7} B ) +[\omega\p_3\TP^{7},\bar{B}\cdot\cnab]B.
		\end{align} 
		Compared with the case of $\alpha_4=0$, the potential risk is that $\p_3$ may fall on $\omega(x_3)$ and then converts the tangential derivative $\omega \p_3$ to be a normal derivative $\p_3$. So, it suffices to analyze the term $-B_3(\p_3 \omega)(\p_3\TP^7 B)$. Luckily, there is a $B_3$ (replaced by $u_3$ if we commute $\TT^{\alpha}$ with $u\cdot\nab$). Since $B_3|_{\Sigma}=0$, we again use the fundamental theorem of calculus to obtain
		\[
			|B_3(x_3)|\leq \omega(x_3)\|\p_3 B_3\|_{\infty},
		\]and thus 
		\begin{align}
			\eps^{8} \|B_3(\p_3 \omega)(\p_3\TP^7 B)\|_0 \lesssim& \|\p_3B_3\|_{\infty} \|\eps^{8} (\omega\p_3)\TP^{7} B\|_0\le E(t).
		\end{align}
		Other terms arise from commutators can be treated similarly. Therefore, we obtain the energy estimate
		\begin{align}
		\sum_{\lee\alpha\ree=8,\alpha_0=0}\ddt \left\|\eps^{8}\TT^{\alpha} (u, B,p)\right\|_0^2 \le P(E(t)).
		\end{align}
	         The proof is completed.\end{proof}

		\subsubsection{Tangential energy estimate with full time derivatives}\label{sect tt AGU}

		In this subsection we study the time-differentiated equations, i.e., the equations obtained by commuting $\p_t^k$, with \eqref{CMHD3}. We aim to prove the following proposition which gives stronger estimates for full time derivatives than what we need in \eqref{tg list}.
		\begin{prop}\label{tgt8-0} The following tangential estimate for full time derivatives holds:
			\begin{equation}\label{tgt8}
				\ddt\left(\sum_{k=4}^{7} \left\|\eps^{k-1}\p_t^k(u, B,{\ffp^{\frac12}} p)\right\|_0^2+ \left\|\eps^8\p_t^8(u, B, \eps p)\right\|_0^2 \right)\le P(E(t)).
			\end{equation}
		\end{prop}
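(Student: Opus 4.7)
The plan is to repeat the $L^2$-type tangential scheme from the proof of Proposition \ref{tgs8}, with the spatial operator $\TP^8$ replaced by the pure time derivative $\p_t^k$ weighted by $\eps^{k-1}$ for $4\le k\le 7$ and by $\eps^8$ for $k=8$. Since $\p_t$ automatically preserves the slip and perfectly conducting wall conditions $u_3|_{\Sigma}=B_3|_{\Sigma}=0$, every integration by parts produces no boundary contribution, so the main energy identity goes through at the level of \eqref{tgs8-1}--\eqref{tgs8-3}. Concretely, for fixed $k$ I would apply $\eps^{k-1}\p_t^k$ to the momentum equation, test against $\eps^{k-1}\p_t^k u$, integrate $B\cdot\nab$ by parts, and insert $D_t B=(B\cdot\nab)u-B(\nab\cdot u)$ to extract $-\tfrac12\ddt\|\eps^{k-1}\p_t^k B\|_0^2$; then integrate $\nab$ by parts in the pressure term and insert the continuity equation $\ffp D_t p+\nab\cdot u=0$ to extract $-\tfrac12\ddt\|\eps^{k-1}\ffp^{\frac12}\p_t^k p\|_0^2$. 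For $k=8$ the same computation is run with weight $\eps^8$ on $(u,B)$ and weight $\eps^9$ on $p$, comparable to $\eps^8\ffp^{\frac12}$ by \eqref{ff' is lambda}.

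The structural step that must still be carried out is the Lorentz-force cancellation: the pressure integration by parts produces
\begin{align*}
\mathcal{K}_2=\int_{\Om}\eps^{k-1}\p_t^k(\tfrac12|B|^2)\,\bigl(\eps^{k-1}\p_t^k\nab\cdot u\bigr)\dx,
\end{align*}
while the $B$-equation produces
\begin{align*}
\mathcal{K}_1=-\int_{\Om}\eps^{k-1}\p_t^k B\cdot\bigl(\eps^{k-1}\p_t^k(B\nab\cdot u)\bigr)\dx,
\end{align*}
and their top-order pieces cancel exactly as in \eqref{tgs8-4}--\eqref{tgs8-9}. For the mixed-order leftovers, any surviving $\p_3 u_3$ is rewritten via the continuity equation as $-\ffp D_t p-\cnab\cdot\bar u$, converting it into tangential derivatives with the correct Mach number weight.

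The main obstacle is controlling the commutator $[\rho D_t,\p_t^k]u-[B\cdot\nab,\p_t^k]B$, whose delicate terms contain a bare $\p_3$ acting on a high-order factor. The worst cases are $\p_t^j(\rho u_3)\,\p_3\p_t^{k-j}u$ and $\p_t^j B_3\,\p_3\p_t^{k-j}B$ with $j\ge 1$. Since $u_3|_{\Sigma}=B_3|_{\Sigma}=0$, the fundamental theorem of calculus yields
\begin{align*}
|\p_t^j u_3(x_3)|\lesssim\omega(x_3)\|\p_3\p_t^j u_3\|_{\infty},\qquad |\p_t^j B_3(x_3)|\lesssim\omega(x_3)\|\p_3\p_t^j B_3\|_{\infty},
\end{align*}
so the bare $\p_3$ is converted into the anisotropic tangential operator $\TT_4=\omega(x_3)\p_3$; the resulting product fits into $E(t)$ after redistributing Mach number weights along the Leibniz expansion, exactly as in the $\omega$-trick used in the proofs of Lemma \ref{curl H5} and Proposition \ref{tgs8}. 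Remaining commutators involving $\frac{\p\rho}{\p p}$, $\p_S\rho$ and $\ffp$ are harmless by \eqref{ff property} together with $D_tS=0$. Summing the resulting differential inequalities over $k=4,\dots,7$ and adding the $k=8$ contribution gives \eqref{tgt8}.
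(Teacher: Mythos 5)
Your proposal is correct and follows essentially the same route as the paper: replace $\TP^8$ in the Proposition \ref{tgs8} scheme by $\eps^{k-1}\p_t^k$ (resp. $\eps^8\p_t^8$), exploit that time derivatives preserve the boundary conditions, reproduce the $\mathcal{K}_1$/$\mathcal{K}_2$ cancellation, and handle the commutators carrying a bare $\p_3$ via the fundamental-theorem-of-calculus $\omega(x_3)$-trick, with the extra $\eps$ on $\p_t^8 p$ supplied by the $\ffp$ factor from the continuity equation. This matches the paper's argument in all essential respects.
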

	    \begin{proof}  Let us first consider the case when $k=8$, that is, $\eps^8\p_t^8$-estimates. We just need to replace $\TP^8$ by $\p_t^8$ in the proof of Proposition \ref{tgs8} and then check if the analogues of the commutators $\mathcal{J}_1\sim\mathcal{J}_6$ can be controlled by $P(E(t))$ or not. From \eqref{tgs8-6} and \eqref{tgs8-8}, we know that the analogue of $\mathcal{J}_5$ and $\mathcal{J}_6$ can be controlled in the same way because $\p_t^8 (u,B)$ and $\TP^8(u,B)$ have the same weights of Mach number. Next, let us analyze the commutators in the analogues of $\mathcal{J}_1\sim\mathcal{J}_3$. For example, we consider
\[
\eps^8[\p_t^8,B\cdot\nab]f,~~f=u\text{ or } B.
\]
This commutator is equal to
\begin{align*}
&\eps^8(\p_t^8B_j\p_jf+8\p_tB_j\p_j\p_t^7f)+\sum_{k=2}^7\eps^8\binom{8}{k}\p_t^k B_j\p_j\p_t^{8-k}f\\
=&~(\eps^8\p_t^8B_j)(\p_j f)+8(\p_t B_j)\p_j(\eps^8\p_t^7 f)+\sum_{k=2}^7\binom{8}{k}(\eps^{k-1}\p_t^k B_j)(\eps^{9-k}\p_j\p_t^{8-k}f),
\end{align*}where the $L^2(\Om)$ norms of the first term and the third term can be directly controlled by $E(t)$. For the second term, when $j=1,2$, it can be directly controlled; when $j=3$, we again use $\p_t B_3|_{\Sigma}=0$ and fundamental theorem of calculus to create a weight function $\omega(x_3)$:
\[
\|(\p_t B_j)\p_j(\eps^8\p_t^7 f)\|\leq \|\p_t \bar{B}\|_{\infty}\|\eps^8\TP B\p_t^7 f\|_0+\|\p_3\p_t B_3\|_{\infty}\|\eps^8(\omega\p_3)\p_t^7 f\|_0\lesssim \sqrt{E_4(t)E_8(t)}\leq E(t).
\]
It remains to analyze the commutator $\mathcal{J}_4$ arising from \eqref{tgs8-5}. The difference is that $\p_t^8 p$ needs one more $\eps$ weight. Luckily, this term is produced because we invoke the continuity equation which automatically gives us $\eps^2$ weight. So, the analogue of $\mathcal{J}_4$ is controlled in the following way:
\begin{equation}
\begin{aligned}
&-\io\eps^8\p_t^8p\left(\eps^{8}{\ffp}[\p_t^8,u\cdot\nab]p\right)\dx=-\io\eps^8{\ffp^{\frac12}}\p_t^8p\left(\eps^{8}{\ffp^{\frac12}}[\p_t^8,u\cdot\nab]p\right)\dx\\
=&-\io\eps^8{\ffp^{\frac12}}\p_t^8p\left(\eps^{8}{\ffp^{\frac12}}(\p_t^8u_j\p_j p+8\p_tu_j\p_j\p_t^8 p)\right)\dx\\
&-\io(\eps^8{\ffp^{\frac12}}\p_t^8p)\left(\sum_{k=2}^7\binom{8}{k}(\eps^{k-1}\p_t^k u_j)(\eps^{9-k}{\ffp^{\frac12}}u_j\p_j\p_t^{8-k}p)\right)\dx\lesssim E(t).
\end{aligned}
\end{equation}

Therefore, we conclude the $\eps^8\p_t^8$-estimates as follows
\begin{equation}\label{tgt8-1}
\ddt \left\|\eps^8\p_t^8(u, B, {\ffp^{\frac12}} p)\right\|_0^2 \le P(E(t)).
\end{equation}

For $4\le k\le 7$, the proof is still parallel to Proposition \ref{tgs8} if we replace $\eps^8\p_t^8$ by $\eps^{k-1}\p_t^k~(4\leq k\leq 7)$. We only need to re-consider the estimates of the commutators arising in the analogues of $\mathcal{J}_1\sim\mathcal{J}_6$. For example, we look at the case $k=7$. Similarly as above, we first consider th $L^2$ estimates of the following commuator
		\[
			\eps^{6}[\p_t^{7},B\cdot\nab]f,~~f=u\text{ or } B,
		\]which is equal to 
		\begin{align*}		
			&\eps^6(\p_t^7B_j\p_jf+7\p_tB_j\p_j\p_t^6f)+\sum_{k=2}^6\eps^6\binom{7}{k}\p_t^k B_j\p_j\p_t^{7-k}f\\
			=&~(\eps^6\p_t^7B_j)(\p_j f)+7(\p_t B_j)\p_j(\eps^6\p_t^7 f)+\sum_{k=2}^6\binom{7}{k}(\eps^{k-1}\p_t^k B_j)(\eps^{7-k}\p_j\p_t^{7-k}f).
		\end{align*}Again, the first term and the third term are directly controlled by $E(t)$. For the second term, using $\p_t B_3|_{\Sigma}=0$ and fundamental theorem of calculus, we then obtain 
		\[
			\|(\p_t B_j)\p_j(\eps^6\p_t^7 f)\|_0\leq \|\p_t \bar{B}\|_{\infty}\|\eps^6\TP\p_t^6f\|_0+\|\p_3\p_tB_3\|_{\infty}\|\eps^6(\omega\p_3)\p_t^6f\|_0\lesssim\sqrt{E_4(t)E_7(t)}\lesssim E(t).
		\]
As for the analogue of $\mathcal{J}_4$, the continuity equation gives us extra $\eps^2$ weight such that the estimate is uniform in Mach number:
		\begin{equation}
		\begin{aligned}
			&-\io\eps^6\p_t^7p\left(\eps^{6}{\ffp}[\p_t^7,u\cdot\nab]p\right)\dx=-\io\eps^6{\ffp^{\frac12}}\p_t^6{\ffp^{\frac12}}p\left(\eps^{7}[\p_t^7,u\cdot\nab ]p\right)\dx\\
			=&-\io\eps^6{\ffp^{\frac12}}\p_t^7p\left(\eps^{6}{\ffp^{\frac12}}(\p_t^7u_j\p_j p+7\p_tu_j\p_j\p_t^6 p)\right)\dx\\
			&-\io(\eps^6{\ffp^{\frac12}}\p_t^7p)\left(\sum_{k=2}^6\binom{7}{k}(\eps^{k-1}\p_t^k u_j)(\eps^{7-k}{\ffp^{\frac12}}u_j\p_j\p_t^{8-k}p)\right)\dx\lesssim E(t).
		\end{aligned}
		\end{equation}
Therefore, we obtain the energy estimate
		\begin{align}
			\ddt \left\|\eps^6\p_t^7(u, B, {\ffp^{\frac12}} p)\right\|_0^2\le P(E(t)).
		\end{align}

The case when $4\leq k\leq 6$ can be proved in the same way, so we conclude the following energy estimate for full time derivatives
		\begin{align}
			\ddt\sum_{k=4}^{7} \left\|\eps^{k-1}\p_t^k(u, B, {\ffp^{\frac12}} p)\right\|_0^2\le P(E(t)).
		\end{align}
		The proof is completed.
		\end{proof}

		\subsubsection{Space-time mixed tangential derivatives}

		We still need to prove the tangential estimates for $\eps^{8}\TT^{\alpha}~(0<\alpha_0<8)$. Indeed, they can be proved in the same way as Proposition \ref{tgs8} and Proposition \ref{tgt8-0} because of the following reasons
		\begin{itemize}
			\item The derivatives $\eps^{8}\TT^{\alpha}~(0<\alpha_0<8)$ are tangential, so they preserve the boundary conditions and thus all boundary integrals must vanish.
			\item The commutators arising in the proof of Proposition \ref{tgs8} and Proposition \ref{tgt8-0} do not produce extra time derivatives, so the weight $\eps^8$ is enough to close the estimate.
			\item When $\alpha_4\neq 0$, that is, $(\omega\p_3)^{\alpha_4}$ appears in $\TT^\alpha$, it suffices to use the same strategy as presented in Proposition \ref{tgs8}. Indeed, we only commute $(\omega\p_3)$'s with either  $u\cdot\nab$ or $B\cdot\nab$, so there must a $u_3$ or $B_3$ appearing when $\p_3$ falls on $\omega$. Thus, we can use the boundary conditions for $u_3,~B_3$ to reproduce the weight function $\omega(x_3)$.
		\end{itemize}
		Based on the above analysis, we conclude the tangential estimates in following proposition.
		\begin{prop}\label{tgst8}
			We have the energy inequality:
			\begin{equation}\label{Tt 8}
			\begin{aligned}
			\ddt \bigg(	&\sum_{\lee\alpha\ree=8,\alpha_0<8}\left\|\eps^{8}\TT^{\alpha} (u, B, p)\right\|_0^2 + \left\|\eps^8\p_t^8(u, B, {\ffp^{\frac12}} p)\right\|_0^2 \\
			&+\sum_{l=0}^3\sum_{\lee\alpha\ree=2l,\alpha_0<2l}\left\|\eps^{3+l}\TT^{\alpha}\p_t^{4-l} (u, B, p)\right\|_0^2+\sum_{l=0}^3\left\|\eps^{3+l}\p_t^{4+l} (u, B,{\ffp^{\frac12}} p)\right\|_0^2\bigg)\le P(E(t)).
			\end{aligned}
			\end{equation}
		\end{prop}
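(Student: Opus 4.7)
The plan is to treat the space-time mixed case by essentially transplanting the proofs of Proposition \ref{tgs8} and Proposition \ref{tgt8-0} verbatim, treating each block $\eps^{w}\TT^{\alpha}\p_t^k$ uniformly. For each multi-index pattern in \eqref{Tt 8}, I would apply the operator $\mathcal{D}:=\eps^{w}\TT^{\alpha}\p_t^k$ to the momentum equation $\rho D_t u - (B\cdot\nab)B + \nab P = 0$ and test against $\eps^{w}\TT^{\alpha}\p_t^k u$. After inserting the evolution equation of $B$ to cancel the $(B\cdot\nab)B$ contribution into a time-derivative of $\|\mathcal{D} B\|_0^2$, and invoking the continuity equation in the pressure piece to cancel $\|\mathcal{D}(p+\tfrac12|B|^2)\|$ against $\|\sqrt{\ffp}\mathcal{D}p\|_0^2$ (with the $\mathcal{K}_1+\mathcal{K}_2$ cancellation for $|B|^2/2$ reproducing exactly the one in Proposition \ref{tgs8}), the main energy identity emerges modulo commutators.

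Next, I would reduce the $L^2$-estimate for $\eps^{w}\TT^{\alpha}\p_t^k p$ itself by invoking the momentum equation, which is possible precisely when $\TT^{\alpha}$ contains at least one spatial tangential derivative (the cases $\lee\alpha\ree=8$, $\alpha_0<8$ and $\lee\alpha\ree=2l$, $\alpha_0<2l$ in \eqref{Tt 8}). In that case, $\nab p$ is converted to a tangential derivative of $(u,B)$ (or to $B\cdot \nab B$ which is also tangential), and the required bound becomes part of the $(u,B)$ energy already under control. When $\TT^{\alpha}\p_t^k$ is purely time-derivatives, one instead retains the $\sqrt{\ffp}$ weight (contributing the $\sqrt{\ffp}p$ term in \eqref{Tt 8}) since the continuity equation alone supplies the needed cancellation.

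The commutator analysis proceeds exactly along the lines of $\mathcal{J}_1,\ldots,\mathcal{J}_6$ in the proof of Proposition \ref{tgs8}, with the verifications of Proposition \ref{tgt8-0} for purely time derivatives. Two observations drive the bookkeeping: (i) commuting $\TT^{\alpha}\p_t^k$ with $u\cdot\nab$ or $B\cdot\nab$ never increases the time-derivative count, so the chosen weight $\eps^w$ suffices to match the energy functional $E(t)$; (ii) when $(\omega\p_3)^{\alpha_4}$ appears in $\TT^{\alpha}$ and a $\p_3$ falls on $\omega(x_3)$, the resulting normal derivative is always paired with a $u_3$ or $B_3$, which vanishes on $\Sigma$, so the fundamental theorem of calculus recovers a factor of $\omega(x_3)$ and hence a tangential derivative, exactly as executed for $[\omega\p_3\TP^7,B\cdot\nab]B$ in Proposition \ref{tgs8}. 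The commutator arising from $\p_3u_3$ in the continuity equation is handled by replacing $\p_3 u_3= -\ffp D_t p - \cnab\cdot\bar u$, which converts a normal derivative into a tangential one while generating an $\eps^2$ factor that is harmless given the weights in \eqref{Tt 8}.

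The main obstacle I expect is the bookkeeping of Mach-number weights in the mixed cases $\lee\alpha\ree=2l,\alpha_0<2l$ with $k=4-l$, because the weight on $p$ in the energy, $\eps^{(k+\alpha_0-l-3)_+}$, differs from the weight on $(u,B,S)$, and one must check that every commutator produced by redistributing $\TT^{\alpha}\p_t^k$ across $u\cdot\nab,~B\cdot\nab,~D_t$ is controllable by $E(t)$ without requiring an additional $\eps$. The key identity to verify is that whenever a derivative splits off and strictly fewer than $k$ time derivatives remain on one factor, the companion factor carries at most order $4$ and therefore sits inside $E_4(t)\sim E_{4+l}(t)$, while when the companion factor has high order the split-off factor is at most $\p_t$ or $\cnab$ or $\omega\p_3$ of a low-order quantity, bounded in $L^\infty$. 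Combining these intermediate energy estimates over all patterns $(\alpha,k)$ in \eqref{Tt 8} and summing gives the claimed inequality.
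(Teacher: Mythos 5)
Your proposal is correct and follows essentially the same route as the paper: the paper's own proof of Proposition \ref{tgst8} is precisely a reduction to the arguments of Propositions \ref{tgs8} and \ref{tgt8-0}, justified by the same three observations you make (tangential derivatives preserve the boundary conditions so no boundary integrals appear; the commutators never increase the time-derivative count so the chosen $\eps$-weights suffice; and when $\p_3$ hits $\omega(x_3)$ the accompanying $u_3$ or $B_3$ vanishes on $\Sigma$, so the fundamental theorem of calculus restores the weight $\omega$). Your additional remarks on reducing $\eps^{w}\TT^{\alpha}\p_t^k p$ via the momentum equation when a spatial tangential derivative is present, versus keeping the $\ffp^{1/2}$ weight in the purely temporal case, match the paper's treatment exactly.
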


		\subsection{Uniform estimates}
		Since $D_t S=0$, we can easily prove the estimates for $S$ by directly commuting $D_t$ with $\p_*^{\alpha}$ and corresponding weights of Mach number. The proof does not involve any boundary term because we do not integrate by parts, so we omit the details.
		\begin{equation}
			\ddt \sum_{l=0}^4\sum_{\lee\alpha\ree=2l}\left\|\eps^{(k-1)_++2l} \TT^{\alpha}\p_t^kS\right\|_{4-k-l}^2\leq P(E(t)).
		\end{equation}

		Combining the tangential estimates presented in Propositions \ref{tgs8}-\ref{tgst8} with the div-curl analysis in section \ref{sect div curl}, the reduction of pressure in section \ref{sect reduction q}, the $L^2$ estimates and the summary in section \ref{sect tg list}, we can get the following energy inequality
		\begin{equation}
			\ddt E(t) \le P(E(t)),
		\end{equation}where $E(t)$ is defined by \eqref{energy intro}.
		Since the right side of the energy inequality does not rely on $\eps$, we can use Gr\"onwall's inequality to prove that there exists some $T>0$ independent of $\eps$ such that
		\begin{equation}
			\sup_{t\in[0,T]} E(t) \le P(E(0)).
		\end{equation}
		Theorem \ref{main thm, well data} is proven.

		\subsection{The case of 2D MHD}\label{sect 2D}
		
		The previous sections are devoted to the incompressible limit of 3D MHD. Now we explain how to modify the proof such that it is valid for 2D MHD flows. The proof for the 2D case is essentially the same as the 3D case: we only need to do a slight modification in the vorticity analysis because the vorticity in 2D is a scalar function, not a vector function. First, lemma \ref{hodgeTT} should be modified as below
		\begin{lem}[Hodge elliptic estimates]\label{hodge2D}
			For any sufficiently smooth vector field $X\in\R^2$ and any real number $s\geq 1$, one has
			\begin{equation}
				\|X\|_s^2\lesssim \|X\|_0^2+\|\nab\cdot X\|_{s-1}^2+\|\nab^\perp\cdot X\|_{s-1}^2 +|X\cdot N|^2_{s-\frac12},
			\end{equation} where $\nab=(\p_1,\p_2)$ and $\nab^\perp=(-\p_2,\p_1)$.
		\end{lem}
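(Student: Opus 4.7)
The plan is to mimic the proof of Lemma \ref{hodgeTT} in the 3D case, the only modification being that the vector curl $\nab\times$ is replaced by the scalar curl $\nab^\perp\cdot$, and consequently the pointwise algebraic identity used in 3D is replaced by its 2D analogue
\[
|\nab X|^2 = (\nab\cdot X)^2 + (\nab^\perp\cdot X)^2 - 2\bigl(\p_1 X_1\,\p_2 X_2 - \p_2 X_1\,\p_1 X_2\bigr),
\]
where the last term is the Jacobian $\det(\nab X)$ and plays the role of the null form.

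I would first establish the case $s=1$ by integrating this identity over $\Om=\T\times(-1,1)$. The first two terms produce $\|\nab\cdot X\|_0^2+\|\nab^\perp\cdot X\|_0^2$ directly. For the Jacobian contribution, I would integrate by parts once: since the outward normal $N$ on $\Sigma_\pm=\{x_2=\pm1\}$ has only the $x_2$-component, all surviving boundary integrals involve the trace of $X_2=X\cdot N$ (up to sign) and a tangential derivative of $X_1$. Integrating by parts once more along the periodic direction $x_1$ (no boundary contribution from $\T$), the integral takes the schematic form $\int_{\Sigma} X_1\,\p_1(X\cdot N)\,\mathrm{d}S'$, which is controlled by $|X\cdot N|_{1/2}\,\|X\|_1$ via the trace theorem. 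Young's inequality with a small parameter $\delta$ then absorbs $\delta\|X\|_1^2$ into the left-hand side, giving the $s=1$ case.

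For $s\geq 2$, the estimate extends by induction on tangential derivatives. I would apply the $s=1$ estimate to $\cnab^\beta X$ with $|\beta|\leq s-1$, noting that the tangential operator $\cnab=\p_1$ commutes with $\nab\cdot$ and $\nab^\perp\cdot$ and preserves the boundary trace in the form $\cnab^\beta X\cdot N=\cnab^\beta(X\cdot N)$. Remaining normal derivatives are then recovered by means of the algebraic identities
\[
\p_2 X_1 = -\nab^\perp\cdot X + \p_1 X_2,\qquad \p_2 X_2 = \nab\cdot X - \p_1 X_1,
\]
which trade one normal derivative for either a tangential derivative or a first-order term in $\nab\cdot X$, $\nab^\perp\cdot X$; iterating this reduction $s$ times yields the bound for general $s\geq 1$.

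The principal obstacle is the boundary term generated by integrating the Jacobian by parts: it must be organized so that the trace of $X\cdot N$ (rather than of $X$ itself) appears, since only the normal trace is controlled by the right-hand side of the inequality. Periodicity in $x_1$ is essential here because it kills the tangential boundary contributions when we integrate by parts along $\Sigma_\pm$; without it one would need an additional tangential trace on $\p\Sigma$. Once the boundary term has been recast in terms of $X\cdot N$ and a tangential derivative of $X$, the trace theorem $|X|_{s-\frac12}\lesssim\|X\|_s$ together with Young's inequality closes the estimate in the standard way.
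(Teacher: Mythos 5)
The paper states Lemma \ref{hodge2D}, like its 3D counterpart Lemma \ref{hodgeTT}, without proof — it is quoted as a standard div-curl (Hodge-type) estimate — so there is no in-paper argument to compare against; I can only assess your proposal on its own merits, and it is a correct rendition of the standard argument. The pointwise identity $|\nab X|^2=(\nab\cdot X)^2+(\nab^\perp\cdot X)^2-2\det(\nab X)$ is right; the Jacobian is indeed in divergence form, $\det(\nab X)=\p_1(X_1\p_2 X_2)-\p_2(X_1\p_1 X_2)$, so periodicity in $x_1$ kills the first term and the second leaves only $-\is X_1\,\p_1(X\cdot N)\dS$ on $\Sigma_\pm=\{x_2=\pm1\}$ (the unit normal there being constant), which is exactly the pairing of $X_1\in H^{1/2}(\Sigma)$ against $\p_1(X\cdot N)\in H^{-1/2}(\Sigma)$ that produces $\|X\|_1\,|X\cdot N|_{1/2}$ and is then absorbed by Young's inequality. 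The reduction identities $\p_2X_1=\p_1X_2-\nab^\perp\cdot X$ and $\p_2X_2=\nab\cdot X-\p_1X_1$ correctly convert every excess normal derivative into a tangential one plus a div/curl term. Two small points you should make explicit: (i) each application of the $s=1$ step to $\cnab^\beta X$ produces the intermediate quantities $\|\cnab^\beta X\|_0$ and $|\cnab^\beta X\cdot N|_{1/2}$ on the right; the former is handled by running the induction upward from $s=1$, and the latter by $|\p_1^{|\beta|}(X\cdot N)|_{1/2}\leq|X\cdot N|_{s-1/2}$, so the scheme closes but the bookkeeping deserves a sentence. (ii) The lemma is stated for real $s\geq1$, whereas your induction gives integer $s$; the fractional case follows by interpolation, which is worth stating since the paper applies the lemma only with integer $s$ anyway.
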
 Taking $\nab^\perp\cdot$ in the momentum equation, we obtain the analogue of \eqref{curlv eq} as
		\begin{equation}\label{curlv eq 2D}
			\rho D_t(\nab^\perp \cdot u)-(B\cdot\nab)(\nab^\perp \cdot B)=\rho[D_t,\nab^\perp\cdot] u-[B\cdot\nab,~\nab^\perp\cdot ] B-(\nab^\perp\rho)\cdot(D_t u),
		\end{equation}which has the same structure as \eqref{curlv eq}. Thus, one can follow the analysis in section \ref{sect div curl}. The only slight difference is the treatment of Lorentz force term. We take $\p^3$-estimate of $\nab^\perp\cdot(u,B)$ for an example. Following the proof of lemma \ref{curl H4}, we need to control the following integral
		\begin{equation}
			\mathcal{K}'':=\int_{\Om}\p^3(\nab^\perp\cdot B)~\p^3\nab^\perp\cdot ( {\ffp} B D_t p) \dx,
		\end{equation}where the problematic term is 
$${\ffp}(B\cdot\nab^\perp)\p^3D_t p={\ffp}(-B_1\p^3\p_2D_t p+B_2\p^3\p_1D_tp)$$
after inserting the continuity equation. Commuting $\nab^\perp$ with $D_t$, we have
		\begin{align*}
			&{\ffp}(-B_1\p^3\p_2D_t p+B_2\p^3\p_1D_tp)\\
			=&~{\ffp}\left(-B_1\p^3D_t(\p_2p)+B_2\p^3D_t(\p_1 p)\right)-{\ffp}\left(B_1\p^3(\p_2u_j\p_j p)-B_2\p^3(\p_1u_j\p_j p)\right),
		\end{align*}where the $L^2(\Om)$ norm of the second term is directly controlled by $E_4(t)$. For the first term, we again invoke the momentum equation, which in 2D reads
		\begin{align*}
		&-\p_1p=\rho D_t u_1-B_1\p_1B_1-B_2\p_2B_1+\p_1(|B|^2/2)=\rho D_tu_1+B_2(\p_1B_2-\p_2 B_1)=\rho D_t u_1+B_2(\nab^\perp\cdot B),\\
		&-\p_2p=\rho D_t u_2-B_1\p_1B_2-B_2\p_2B_2+\p_2(|B|^2/2)=\rho D_tu_1-B_1(\p_1B_2-\p_2 B_1)=\rho D_t u_2-B_1(\nab^\perp\cdot B).
		\end{align*} Now, we have
		\begin{align*}
		&{\ffp}(-B_1\p^3\p_2D_t p+B_2\p^3\p_1D_tp)\\
		=&~{\ffp}\rho (B_1 \p^3 D_t^2 u_2-B_2 \p^3D_t^2 u_1)\underbrace{-{\ffp} (B_1^2+B_2^2) \p^3D_t(\nab^\perp\cdot B)}_{{\ffp}|B|^2\text{ has definite sign}}+~L^2(\Om)\text{-controllable terms},
		\end{align*}and thus
		\begin{equation}
		\begin{aligned}
		\mathcal{K}''\lleq& \io\p^3(\nab^\perp\cdot B) {\ffp}\rho (B_1 \p^3 D_t^2 u_2-B_2 \p^3D_t^2 u_1)\dx-\int_{\Om}\p^3(\nab^\perp\cdot B)~{\ffp} |B|^2 \p^3D_t(\nab^\perp\cdot B) \dx\\
		\lleq&\io \p^3(\nab^\perp\cdot B){\ffp}\rho (B_1 \p^3 D_t^2 u_2-B_2 \p^3D_t^2 u_1)\dx\\
		&-\frac12\ddt\io \left|{\ffp^{\frac12}}|B|\p^3(\nab^\perp\cdot B)\right|^2\dx+\frac12\io (\nab\cdot u)\left(\left|{\ffp^{\frac12}}|B|\p^3(\nab^\perp\cdot B)\right|^2\right)\dx,
		\end{aligned}
		\end{equation}where the last term on the right side can be directly controlled, and the first term requires the control of $\|{\ffp} D_t^2 u\|_3$.  The conclusion of Lemma \ref{curl H4} is then modified to be
		\begin{align}
				\frac12\ddt\io \rho\left|\p^3(\nab^\perp\cdot u)\right|^2 +(1+{\ffp}|B|^2)\left|\p^3(\nab^\perp\cdot B)\right|^2\dx\leq P(E_4(t))+E_5(t).
		\end{align}

		\section{Incompressible limit} \label{sect limit}
		This section is devoted to showing that we can pass the solution of \eqref{CMHD3} to the incompressible limit \eqref{IMHD} provided that we have the convergence for initial data.  In other words, we study the behavior of the solution of \eqref{CMHD3} as the Mach number $\eps$ tends to $0$. Recall that the Mach number $\eps$ is defined in Section \ref{sect mach number definition}.

		The energy estimate presented in Theorem \ref{main thm, well data} implies the boundedness of $\sum\limits_{k=0,1}\|\p_t^k(u,B,S)\|_{4-k}$ uniformly in $\eps$ within the time interval $[0,T]$. Thus, by Aubin-Lions Lemma, up to a subsequence, we have
		\begin{equation}
		\begin{aligned}
			(u,B,S) \to (u^0,B^0,S^0) \quad & \text{weakly-* in } L^{\infty}([0,T];H^4(\Om))\text{ and strongly in } C([0,T];H^{4-\delta}(\Om))
		\end{aligned}
		\end{equation}
		for $\delta>0$. Moreover, by using the continuity equation $\eps^2 D_t p + \nab\cdot u=0$, we have
		\begin{equation}
			\nab\cdot u \to \nab\cdot u^0=0 \quad \text{in } L^{\infty}([0,T];H^3(\Om)) \text{ and strongly in } C([0,T];H^{3-\delta}(\Om))
		\end{equation}
		for $\delta>0$ because $\|\eps \p_{t,x}p\|_3$ and $\|u\|_4$ are uniformly bounded in $[0,T]$.
		
		Now, following the argument in Klainerman-Majda \cite[Theorem 1]{Klainerman1981limit}, there exists a function $\pi$ satisfying $\nab\pi\in C([0,T];H^3(\Om))$ such that
		\begin{enumerate}
		\item The following convergence result holds \[\rho^{-1}\nab p\to \varrho^{-1}\nab \pi\quad \text{weakly-* in } L^{\infty}([0,T];H^3(\Om)) \text{ and strongly in } C([0,T];H^{3-\delta}(\Om)).\]
		\item $(u^0,B^0,S^0)\in L^{\infty}([0,T];H^4(\Om))$ solves the incompressible MHD equations together with a transport equation
		\begin{equation}
			\begin{cases}
				\varrho(\p_t u^0 + u^0\cdot\nab u^0) -B^0\cdot\nab B^0+ \nab (\pi+\frac12|B^0|^2) =0&~~~ \text{in}~[0,T]\times \Omega,\\
				\p_t B^0+ u^0\cdot\nab B^0 -B^0\cdot\nab u^0=0 &~~~ \text{in}~[0,T]\times \Omega,\\
				\nab\cdot u^0=\nab\cdot B^0=0&~~~ \text{in}~[0,T]\times \Omega,\\
				\p_t S^0+u^0\cdot\nab S^0=0&~~~ \text{in}~[0,T]\times \Omega,\\
				u_3^0=B_3^0=0&~~~\text{on}~[0,T]\times\Sigma,\\
				(u^0,B^0,S^0)|_{t=0}=(u_0^0, B_0^0,S_0^0).
			\end{cases}
		\end{equation}
		Here $\varrho$ satisfies $\p_t\varrho+u^0\cdot\nab\varrho=0,$ with initial data $\varrho_0:=\rho(0,S_0^0)$.
		\end{enumerate}
 Moreover, the uniqueness of the limit function implies that the convergence holds as $\eps\to 0$ without restricting to a subsequence. Theorem \ref{main thm, limit} is then proven.
 
 \paragraph*{Data avaliability.} Data sharing is not applicable as no datasets were generated or analyzed during the current study.

\subsection*{Ethics Declarations}
\paragraph*{Conflict of interest.} On behalf of all authors, the corresponding author states that there is no conflict of interest.

\begin{appendix}
\section{The proof of local well-posedness in $H_*^8$}\label{apdx LWP}
In the appendix, we prove the local well-posedness of system \eqref{CMHD3} with initial data in $H_*^8(\Om)$ satisfying the assumption of Theorem \ref{main thm, well data}. This can be done by standard Picard iteration. We start with $(u^{(0)}, B^{(0)}, \rho^{(0)}, S^{(0)})=(\vec{0},\vec{0},1,0)$ and inductively define $(u^\nnr,B^\nnr, P^\nnr, S^\nnr)$ for each $n\in\N$ provided that we already have $\{(u^{(k)}, B^{(k)}, P^{(k)}, S^{(k)})\}_{k\leq n}$ by the following linear system with variable coefficients only depending on $\{(u^{(k)}, B^{(k)}, P^{(k)}, S^{(k)})\}_{k\leq n}$:
\begin{equation}\label{CMHDll}
\begin{cases}
\rho^\nnn(\p_t + u^\nnn\cdot\nab) u^\nnr - (B^\nnn\cdot\nab) B^\nnr + \nab P^\nnr = 0 &\text{ in }[0,T]\times\Om,\\
\ffp^\nnn(\p_t + u^\nnn\cdot\nab)  P^\nnr - \ffp^\nnn(\p_t + u^\nnn\cdot\nab)  B^\nnr \cdot B^\nnn + \nab\cdot u^\nnr = 0 &\text{ in }[0,T]\times\Om,\\
(\p_t + u^\nnn\cdot\nab) B^\nnr - (B^\nnn\cdot\nab) u^\nnr + B^\nnn(\nab\cdot u^\nnr) = 0 &\text{ in }[0,T]\times\Om,\\
(\p_t + u^\nnn\cdot\nab) S^\nnr  =0 &\text{ in }[0,T]\times\Om,\\
u_3^\nnr|_{\Sigma}=0 &\text{ on }[0,T]\times\Sigma,\\
(u^\nnr,B^\nnr, P^\nnr, S^\nnr)|_{t=0}=(u_0,B_0,P_0,S_0).
\end{cases}
\end{equation} The basic state $(u^\nnn, B^\nnn, \rho^\nnn, S^\nnn)$ satisfies the following conditions:
\begin{enumerate}
\item (Hyperbolicity assumption) $\rho^\nnn\geq \bar{\rho}_0>0$ for some constant $\bar{\rho}_0$. It is determined by the equation of state $p^\nnn=p^\nnn(\rho^\nnn, S^\nnn)$ where $p^\nnn:=P^\nnn-\frac12|B^\nnn|^2$.
\item (Boundary constraint for the magnetic field) $B_3^\nnn|_{\Sigma}=0$. (Note that $\nab\cdot B^\nnn=0$ no longer propagates from the initial data for the linearized problem!)
\item (Slip condition) $u_3^\nnn|_{\Sigma}=0$.
\item (Equation of state) $\ffp^\nnn:=\frac{\p \ff^{\nnn}}{\p p}(p^\nnn, S^\nnn)$.
\end{enumerate}
After solving $(u^\nnr,B^\nnr, P^\nnr, S^\nnr)$ from \eqref{CMHDll}, we can define $p^\nnr:=P^\nnr-\frac12|B^\nnr|^2$ and define $\rho^\nnr$  from the equation of state $p^\nnr=p^\nnr(\rho^\nnr, S^\nnr)$. 

System \eqref{CMHDll} is a linear, first-order, symmetric hyperbolic system with characteristic boundary conditions. Under the conditions $B_3^\nnn|_{\Sigma}=u_3^\nnn|_{\Sigma}=0$, one can use the argument in \cite{Rauch1985} to compute that the correct number of boundary conditions of \eqref{CMHDll} is 1, that is, $u_3^\nnr|_{\Sigma}=0$. The local existence of \eqref{CMHDll} in $L^2(0,T_0;\Om)$ for some $T_0>0$ can be established by standard hyperbolic PDE theory, e.g., Rauch \cite[Theorem 9]{Rauch1985}.Moreover, since our initial data belongs to $H^8(\Om)$ and satisfies the compatibility conditions up to 7-th order, then \cite[Theorem 2.1']{Secchi1996} implies that this solution is a strong solution and satisfies $\p_t^j(u,B,P,S)\in C([0,T_0];H_*^{8-j}(\Om))$ for $j\leq 8$.

We also need to verify that the solution $B_3^{\nnr}$ vanishes on $\Sigma$ provided $B_3^\nnr|_{t=0}=0$ on $\Sigma$. In fact, the proof of this step is rather easy by restricting the third component of the evolution equation of $B^\nnr$ onto $\Sigma$. Since $u_3^\nnn=B_3^\nnn=0$ on $\Sigma$, we have that $(\p_t +\bar{u}^\nnn\cdot\cnab) B_3^\nnr = 0$ on $\Sigma$. Then standard $L^2(\Sigma)$ estimate leads to the desired result.
\begin{rmk}
It should be noted that there is no need to add modification terms to $B_3^\nnr$ to preserve this constraint as in free-boundary problems \cite{Trakhinin2008CMHDVS, TW2020MHDLWP, TW2021MHDCDST} because now $\Om$ is fixed and its boundary is flat.
\end{rmk}

Now, it remains to prove the following two things:
\begin{itemize}
\item Uniform-in-$n$ estimates for the linear problem \eqref{CMHDll}. We will still use the energy functional $E(t)$ with $\ffp$ replaced by $\ffp^\nnn$. Also, we will see that the estimates are also uniform in $\eps>0$.
\item Strong convergence of the sequence $(u^\nnn, B^\nnn, P^\nnn, S^\nnn)$ as $n\to \infty$. Once we establish this convergence result, the limit system of \eqref{CMHDll} is exactly the original MHD system \eqref{CMHD3}. That is, the local existence of \eqref{CMHD3} is proved. The uniqueness can be established in the same way and thus we will omit the proof.
\end{itemize}

For simplicity of notations, given any $n\in\N$, we denote $(u^\nnr,B^\nnr, P^\nnr, S^\nnr, p^\nnr, \rho^\nnr)$ by $(u,B,P,S,p,\rho)$  and denote the basic states $(u^\nnn, B^\nnn, P^\nnn, S^\nnn, p^\nnn, \rho^\nnn)$ by $(\ur ,\br,\mathring{P},\sr, \pr,\rhor)$. We also write $D_t:=\p_t+u\cdot\nab$ and $\Dtr:=\p_t+\ur\cdot\nab$. Thus, the linear problem \eqref{CMHDll} becomes
\begin{equation}\label{CMHDll0}
\begin{cases}
\rhor \Dtr u - (\br\cdot\nab) B + \nab P = 0 &\text{ in }[0,T]\times\Om,\\
\ffpr \Dtr  P - \ffpr\Dtr  B \cdot \br + \nab\cdot u = 0 &\text{ in }[0,T]\times\Om,\\
\Dtr B - (\br\cdot\nab) u + \br (\nab\cdot u) = 0 &\text{ in }[0,T]\times\Om,\\
\Dtr S  = 0 &\text{ in }[0,T]\times\Om,\\
u_3|_{\Sigma}=0 &\text{ on }[0,T]\times\Sigma,\\
(u,B,P,S)|_{t=0}=(u_0,B_0,P_0,S_0),
\end{cases}
\end{equation}under the assumption $\ur_3|_{\Sigma}=\br_3|_{\Sigma}=0$.

\subsection{Uniform estimates for the linearized problem}
We assume the basic state $(\ur ,\br,\mathring{P},\sr, \pr,\rhor)$ satisfies the following bound: There exists a constant $K_0>0$ and a time $T_0>0$ (both independent of $\eps>0$) such that
\begin{align}\label{energy Err}
\sup_{0\leq T\leq T_0}\sum_{l=0}^4\sum_{\lee\alpha\ree=2l}\sum_{k=0}^{4-l}\left\|\eps^{(k-1)_++2l}  \TT^{\alpha}\p_t^k\left(\ur,\br,\sr,\dot{\ffp}^{\frac{(k+\alpha_0-l-3)_+}{2}}\mathring{P}\right)\right\|_{4-k-l}^2\leq K_0,
\end{align} where $\dot{\ffp}$ is defined by replacing the superscript $\nnn$ by $\nnl$ in $\ffpr$. For the solution $(u,B,P,S,p,\rho)$ to \eqref{CMHDll}, we define the energy functional to be
\begin{align}\label{energy Ell}
\Er(t):=&~\Er_4(t)+\Er_5(t)+\Er_6(t)+\Er_7(t)+\Er_8(t),\\
\Er_{4+l}(t):=&\sum_{\lee\alpha\ree=2l}\sum_{k=0}^{4-l}\left\|\eps^{(k-1)_++2l} \TT^{\alpha}\p_t^k\left( u,  B,  S,\ffpr^{\frac{(k+\alpha_0-l-3)_+}{2}}  P)\right)\right\|_{4-k-l}^2,\quad 0\leq l\leq 4.
\end{align}
\begin{rmk}
The last term in $\Er_{4+l}(t)$ looks slightly different from $E_{4+l}(t)$ for \eqref{CMHD3}, but they are essentially equivalent. In fact, when doing iteration, the variable $p$ is defined {\bf after we solve }$P$ and $B$ from \eqref{CMHDll0}, so we cannot write $p$ in the functional $\Er_{4+l}(t)$. The regularity of $p$ can be easily recovered from $p=P-\frac12|B|^2$ because $P, B, \br$ are all controlled in the above weighted anisotropic Sobolev norm.
\end{rmk}
We aim to prove the following result
\begin{prop}\label{prop Ell}
There exists some $T_1>0$ depending on $K_0$ (independent of $\eps$) such that
\[
\sup_{0\leq t\leq T_1} \Er(t)\leq C(K_0)\Er(0).
\]
\end{prop}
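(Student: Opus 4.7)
The plan is to mirror the scheme of Section \ref{sect uniform} for the linearized system \eqref{CMHDll0}, with two key modifications: the role of the fluid pressure $p$ in Section \ref{sect stat 2} is played by the effective quantity $p':=P-\br\cdot B$, and every commutator now produces coefficients depending on the basic state, which are controlled by the a priori bound $K_0$ via \eqref{energy Err} rather than by the solution norm itself. The key observation is that the vector identity $(\br\cdot\nab)B=\nab(\br\cdot B)-(B\cdot\nab)\br-\br\times(\nab\times B)-B\times(\nab\times\br)$ allows us to rewrite the linearized momentum equation as
\begin{equation}\label{mom lin apdx}
\rhor\Dtr u+\br\times(\nab\times B)=-\nab p'-(B\cdot\nab)\br-B\times(\nab\times\br),
\end{equation}
so the hidden Lorentz-force structure underlying Section \ref{sect stat 2} is preserved; the only difference is that the reduction of $\nab p'$ to tangential derivatives of $(u,B)$ produces additional lower-order terms involving $\nab\br$, whose norms are bounded by $P(K_0)\sqrt{\Er(t)}$ thanks to \eqref{energy Err}.

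First, I would establish the $L^2$ estimate by treating \eqref{CMHDll0} as a Friedrichs-symmetric hyperbolic system: multiply the momentum, continuity and induction equations by $u$, $P$ and $B$ respectively and integrate over $\Om$. Using $\br_3|_{\Sigma}=\ur_3|_{\Sigma}=u_3|_{\Sigma}=0$, the cross terms $-\io u\cdot(\br\cdot\nab)B-\io B\cdot(\br\cdot\nab)u\dx=\io(\nab\cdot\br)(u\cdot B)\dx$ and $\io u\cdot\nab P\dx+\io P\nab\cdot u\dx=0$ produce no boundary contribution, and the extra coupling $-\io\ffpr P\Dtr B\cdot\br\dx$ is absorbed into $P(K_0)\Er(t)$ after substituting $\Dtr B=(\br\cdot\nab)u-\br(\nab\cdot u)$. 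Then I would apply Lemma \ref{hodgeTT} exactly as in Section \ref{sect div curl} to $\|\eps^{(k-1)_+}\p_t^k(u,B)\|_{4-k}$ and its higher-order analogues. The divergence part is reduced to $\|\eps^{(k-1)_++2}\p_t^k\Dtr P\|_{3-k}$ via the linearized continuity equation; the extra term $\ffpr\Dtr B\cdot\br$ contributes a factor of $\eps^2$ and is harmless, while the normal trace vanishes thanks to the preserved boundary constraint $B_3|_{\Sigma}=0$ established above.

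The vorticity analysis is where I expect the bulk of the work. For $\|\nab\times u\|_3^2+\|\nab\times B\|_3^2+\|\ffpr^{1/2}\br\times(\nab\times B)\|_3^2$, I would take $\nab\times$ of \eqref{mom lin apdx}, pair with $\p^3\nab\times u$, and handle the problematic term $\io\p^3\nab\times(\br(\nab\cdot u))\cdot\p^3(\nab\times B)\dx$ arising from the induction equation. Using $\nab\cdot u=-\ffpr\Dtr P+\ffpr\Dtr B\cdot\br$ together with $\nab p'\lleq-\rhor\Dtr u-\br\times(\nab\times B)+[\text{basic-state lower order}]$ exactly as in Lemma \ref{curl H4}, the key term reduces to $\ffpr\br\times\Dtr(\br\times\p^3(\nab\times B))$, and the antisymmetry $(\br\times\mathbf{v})\cdot\mathbf{w}=-(\br\times\mathbf{w})\cdot\mathbf{v}$ again produces the energy contribution $-\tfrac12\ddt\io|\ffpr^{1/2}\br\times\p^3(\nab\times B)|^2\dx$ plus controllable error. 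All extra terms generated by derivatives of $(\ur,\br,\mathring{P},\sr)$ are bounded in $L^2(\Om)$ by $P(K_0)(1+\Er(t))$ thanks to Sobolev embedding applied to \eqref{energy Err}, and the same strategy iterates through the chain $\Er_4\to\Er_5\to\cdots\to\Er_8$ following diagram \eqref{diagram1}.

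For the purely tangential estimates I would mimic Propositions \ref{tgs8}--\ref{tgst8}. Since $\TT^\alpha$ is tangential and $\ur_3|_{\Sigma}=\br_3|_{\Sigma}=0$, taking $\TT^\alpha$ of \eqref{CMHDll0} and performing $L^2$ pairings produces only interior commutators; the cancellation between $\mathcal{K}_1$ and $\mathcal{K}_2$ of Proposition \ref{tgs8} is replaced by the analogous cancellation between $\io\TT^\alpha B\cdot\TT^\alpha(\br\nab\cdot u)\dx$ and $\io(\br\cdot\TT^\alpha B)\TT^\alpha(\nab\cdot u)\dx$, which works modulo terms bounded by $P(K_0)\Er(t)$ since $\br$ can be pulled out at the top order. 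The most delicate part of the argument is the $\eps$-weight bookkeeping: every commutator with the basic state must carry an appropriate power of $\eps$ matching the weight in $\Er_{4+l}(t)$, and this relies on the fact that in each reduction step it is the continuity equation that introduces the factor $\ffpr=O(\eps^2)$, so the derivative count and the $\eps$-weight rise together. Combining the $L^2$, div-curl, vorticity and tangential estimates yields $\ddt\Er(t)\le C(K_0)(1+\Er(t))$, and Gr\"onwall's inequality then produces the bound $\Er(t)\le C(K_0)\Er(0)$ on a time interval $[0,T_1]$ with $T_1=T_1(K_0)$ independent of $\eps$ and $n$.
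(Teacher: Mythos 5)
Your proposal follows essentially the same route as the paper's proof: reduce normal derivatives by div-curl analysis, expose the Lorentz-force structure so that the problematic term $\br\times\nabla(\nabla\cdot u)$ in the current equation produces the $\eps$-weighted energy $\|\ffpr^{1/2}\br\times(\nabla\times B)\|^2$ via the antisymmetry of the cross product, and close the purely tangential estimates with the modified energy quantity $P-\br\cdot B$ (the paper obtains exactly the term $\|\eps^8\ffpr^{1/2}(\TP^8P-\TP^8B\cdot\br)\|_0^2$ from the cancellation between its $\Kr_1$ and $\Kr_2$, which is your $p'$ at the top order). Your way of exposing the structure --- rewriting the momentum equation once and for all as $\rhor\Dtr u+\br\times(\nabla\times B)=-\nabla p'+\text{l.o.t.}$ via the gradient-of-dot-product identity --- is a legitimate and arguably cleaner variant; the paper instead substitutes the continuity and momentum equations into $\br\times(\p^3\nabla(\nabla\cdot u))$ and recombines $-\br_j\p^3\p_jB+\br_j\p^3\nabla B_j$ into $\br\times(\p^3\nabla\times B)$ afterwards. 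Both yield the same energy identity, and your bookkeeping of the basic-state coefficients through $P(K_0)$ is correct.

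The one concrete gap is the divergence of the magnetic field. When you apply Lemma \ref{hodgeTT} to $B$ you need $\|\nabla\cdot B\|_{s-1}$, and for the linearized system \eqref{CMHDll0} the constraint $\nabla\cdot B=0$ does \emph{not} propagate from the initial data (the induction equation is $\Dtr B-(\br\cdot\nabla)u+\br(\nabla\cdot u)=0$ with $\br\neq B$, so taking divergence leaves nontrivial source terms). Your proposal only reduces $\nabla\cdot u$ via the continuity equation and is silent on $\nabla\cdot B$; implicitly treating it as zero, as in the nonlinear case, would be wrong here. The fix is routine but necessary: take the divergence of the induction equation to get a transport equation $\Dtr(\nabla\cdot B)=(\p_i\br_j)(\p_ju_i)-(\nabla\cdot\br)(\nabla\cdot u)-(\p_i\ur_j)(\p_jB_i)$, whose right side is first order in $(u,B)$ with basic-state coefficients, and close $\ddt\|\nabla\cdot B\|_{3}^2\lesssim P(K_0)\Er_4(t)$ (and its higher-order analogues) alongside the vorticity estimates. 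With this added, your argument matches the paper's proof.
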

The proof of Proposition \ref{prop Ell} is very similar to Theorem \ref{main thm, well data}. We will skip the details for the substantially similar part and emphasize the different part. Again, we start with div-curl analysis.
\subsubsection{Div-Curl analysis}
The reduction of the spatial derivative of $P$ is the same as in Section \ref{sect reduction q}, so we omit the detail. We then start with $\Er_4(t)$. For simplicity, we only show the reduction for full spatial derivatives. We have
\begin{align}
\|u,B\|_4^2\lesssim\|u,B\|_0^2+\|\nab\times u, \nab\times B\|_3^2+\|\nab\cdot u,\nab\cdot B\|_3^2+|u_3,B_3|_{3.5}^2.
\end{align}
The boundary term vanishes thanks to the boundary condition for $u_3$ and the constraint for $B_3$. The $L^2$ estimate can be proved in the same way as in Section \ref{sect L2}. The divergence of velocity is also reduced in the same way as in Section \ref{sect div curl}. Below, we present the vorticity analysis and the treatment of $\nab\cdot B$. Do note that the magnetic field may not be divergence-free even if this is true for the initial data.

First, we analyze the divergence of the magnetic field. Taking divergence operator in the third equation of \eqref{CMHDll0}, we get
\begin{align}
\Dtr(\nab\cdot B)=(\p_i\br_j)(\p_j u_i)-(\nab\cdot \br)(\nab\cdot u)-(\p_i \ur_j)(\p_jB_i).
\end{align} We can see that the right side of the above equation only contains first-order terms without any time derivatives, so standard $H^3$ estimates give the control
\begin{align}
\ddt\|\nab\cdot B\|_3^2\lesssim P(K_0)\Er_4(t).
\end{align}

Next, we show that the hidden structure in vorticity analysis still holds for the linearized problem \eqref{CMHDll0}. The evolution equations of vorticity and current are
\begin{align}
\label{curlu ll} \rhor \Dtr(\nab\times u)-(\br\cdot\nab)(\nab\times B)=&~(\nab\rhor)\times(\Dtr u) - (\nab \br_j)\times (\p_j B)-\rhor(\nab \ur_j)\times (\p_j u), \\
\label{curlB ll}\Dtr(\nab\times B)-(\br\cdot\nab)(\nab\times u)-\br\times\nab(\nab\cdot u)=&-(\nab\times\br)(\nab\times u)-(\nab \br_j)\times(\p_j u)-(\nab \ur_j)\times(\p_j B).
\end{align}
Following the analysis in Section \ref{sect div curl}, we get 
\begin{align}
\ddt\frac12\io \rhor|\p^3\nab\times u|^2+|\p^3\nab\times B|^2\dx\leq P(K_0)\Er_4(t)+\KKr,
\end{align}where 
\begin{align}
\KKr:=\io (\p^3\nab\times B)\cdot\left(\br\times(\p^3\nab(\nab\cdot u))\right)\dx.
\end{align}
We now invoke the linearized continuity equation and the momentum equation to get (we write $\ffpr=\eps^2$ for simplicity)
\begin{equation}
\begin{aligned}
\br\times(\p^3\nab(\nab\cdot u))\lleq &~-\eps^2\br\times(\p^3\nab \Dtr P) +\eps^2\br\times((\p^3\nab \Dtr B_j)\br_j)\\
\lleq&~\eps^2\rhor \br\times(\p^3\Dtr^2 u)-\eps^2\br\times\Dtr(\br_j\p^3\p_j B)+\eps^2 \br\times\Dtr(\br_j(\p^3\nab B_j))\\
\lleq&~\eps^2\rhor \br\times(\p^3\Dtr^2 u)+\eps^2\br\times\Dtr(\br\times(\p^3\nab\times B)),
\end{aligned}
\end{equation}where we use the vector identity $(\bd{u}\times(\nab\times\bd{v}))_i=(\p_i\bd{v}_j)\bd{u}_j-\bd{u}_j\p_j\bd{v}_i$ and the $L^2$ norm of the omitted terms are directly controlled by $P(K_0)\Er_4(t)$. Thus, we get
\begin{equation}
\begin{aligned}
\KKr\lleq& \io\eps^2\rhor(\p^3\nab\times B)\cdot\left(\br\times(\p^3\Dtr^2 u)\right)\dx + \io \eps^2(\p^3\nab\times B)\cdot\left(\br\times\Dtr(\br\times(\p^3\nab\times B))\right)\dx\\
=&\io\eps^2\rhor(\p^3\nab\times B)\cdot\left(\br\times(\p^3\Dtr^2 u)\right)\dx - \io \eps^2\Dtr \left(\br\times(\p^3\nab\times B)\right)\cdot\left(\br\times(\p^3\nab\times B)\right)\dx\\
\leq &-\frac12\ddt\io\eps^2\left|\br\times(\p^3\nab\times B)\right|^2\dx + P(K_0)(\Er_4(t)+\Er_5(t)),
\end{aligned}
\end{equation}and we can conclude that
\begin{align}
\frac12\ddt\io\rhor|\p^3\nab\times u|^2+|\p^3\nab\times B|^2+\eps^2\left|\br\times(\p^3\nab\times B)\right|^2\dx\leq P(K_0)(\Er_4(t)+\Er_5(t)).
\end{align}
For $0\leq l\leq 3, 0\leq k\leq 3-l$ and the multi-index $\alpha$ satisfying $\lee\alpha\ree=2l,\alpha_3=0$, we can mimic the above steps and the analysis in Section \ref{sect div curl} to obtain that
\begin{align}
&\frac12\ddt\io\rhor|\eps^{2l+(k-1)_+}\p^{3-l-k}\nab\times\p_t^k\TT^\alpha u|^2+|\eps^{2l+(k-1)_+}\p^{3-l-k}\nab\times\p_t^k\TT^\alpha B|^2+\eps^2\left|\eps^{2l+(k-1)_+}\br\times(\p^{3-l-k}\nab\times\p_t^k\TT^\alpha B)\right|^2\dx \notag\\
\lesssim&~P(K_0)\left(\sum_{j=0}^l \Er_{4+j}(t)+\Er_{4+l+1}(t)\right).
\end{align}
Similarly, the divergence has the following bound
\begin{align}
&\left\|\eps^{2l+(k-1)_+}\nab\cdot\p_t^k\TT^\alpha u\right\|_{3-l-k}^2+\left\|\eps^{2l+(k-1)_+}\nab\cdot\p_t^k\TT^{\alpha}B\right\|_{3-l-k}^2\notag\\
\lesssim &\left\|\eps^{2l+(k-1)_++2}\p_t^k\TT^\alpha \Dtr(P, B)\right\|_{3-l-k}^2 +P(K_0)\left(\sum_{j=0}^l \Er_{4+j}(0)+\int_0^t \Er_{4+j}(\tau)\,\mathrm{d}\tau\right),
\end{align}where the first term is similarly reduced as in Section \ref{sect div curl}.

\subsubsection{Tangential estimates}
The tangential estimates are also substantially similar to the analysis in Section \ref{sect tg}. Taking $\TT^{\alpha}$ in the linearized momentum equation, we get
		\begin{equation}
			\rhor \Dtr(\TT^\alpha u)-(\br\cdot\nab)(\TT^{\alpha} B)+ \TT^{\alpha} \nab P=[\rhor \Dtr, \TT^{\alpha}]u - [\br\cdot\nab,\TT^{\alpha}]B,
		\end{equation}
		where $\TT^{\alpha}:=\TT_0^{\alpha_0}\TT_1^{\alpha_1}\TT_2^{\alpha_2}\TT_4^{\alpha_4}$ and $\lee\alpha\ree=\alpha_0+\alpha_1+\alpha_2+0\times2+\alpha_4\le 8$. As in Section \ref{sect tg}, it suffices to consider the following cases
\begin{itemize}
\item $\lee\alpha\ree=\alpha_0=4$ with $\eps^3$ weight,
\item $\lee\alpha\ree=5,~\alpha_0\geq 3$ with $\eps^4$ weight,
\item $\lee\alpha\ree=6,~\alpha_0\geq 2$ with $\eps^5$ weight,
\item $\lee\alpha\ree=7,~\alpha_0\geq 1$ with $\eps^6$ weight,
\item $\lee\alpha\ree=8$ with $\eps^8$ weight.
\end{itemize} Here we only show the sketch of $\eps^8\TP^8$ estimates and the other cases can be analyzed in the same way as in Section \ref{sect tg}. From the momentum equation, we get
		\begin{align}\label{tglls8-1}
		\frac{1}{2}\ddt\int_{\Om}\rhor|\eps^{8}\TP^8 u|^2 \dx\lleq\int_{\Om}\eps^{8}\TP^{8}(\br\cdot\nab B) \cdot(\eps^{8}\TP^{8}u ) \dx - \int_{\Om}\eps^{8}\TP^{8}(\nab P) \cdot(\eps^{8}\TP^{8}u ) \dx.
		\end{align}

The first term in \eqref{tglls8-1}, after integrating $\br\cdot\nab$ by parts and inserting the equation of $B$, becomes
		\begin{align}\label{tglls8-2}
&\int_{\Om}\eps^{8}\TP^{8}(\br\cdot\nab B) \cdot(\eps^{8}\TP^{8}u ) \dx\notag\\
\lleq&-\frac{1}{2}\ddt\int_{\Om}|\eps^{8}\TP^{8}B|^2 \dx  \underbrace{-\int_{\Om}\eps^{8} \TP^{8}B \cdot (\eps^{8} \TP^{8}(\br\nab\cdot u)) \dx }_{:=\Kr_1}.
		\end{align} The second term in \eqref{tglls8-1}, after integrating by parts and inserting the continuity equation, becomes
		\begin{align}\label{tglls8-3}
			&\int_{\Om}\eps^{8}\TP^{8} P\cdot ( \eps^{8} \TP^{8} \nab\cdot u) \dx\notag\\
		=&-\io (\eps^8\TP^8 P) (\eps^8\TP^8(\ffpr \Dtr P))\dx+\io (\eps^8\TP^8 P) (\eps^8\TP^8(\ffpr \Dtr B\cdot \br))\dx\notag\\
		\lleq&-\frac12\ddt\io|\eps^8\ffpr^{\frac12}\TP^8 P|^2\dx + \underbrace{\io \ffpr (\eps^8\TP^8 P)  \Dtr(\eps^8\TP^8 B\cdot \br)\dx}_{=:\Kr_2}
		\end{align} where the boundary intergral vanishes thanks to $u_3=B_3=0$ on $\Sigma$. We note that the commutator terms are all omitted because they can be analyzed in the same way as in Section \ref{sect tg}. Now, we shall further analyze $\Kr_1$ by inserting the continuity equation
		\begin{align}\label{tglls8-4}
		\Kr_1\lleq & \int_{\Om}\ffpr(\eps^{8} \TP^{8}B \cdot\br) (\eps^{8}\Dtr \TP^{8}P) \dx -\io \eps^8\ffp (\TP^8 B\cdot\br)\Dtr(\TP^8 B\cdot\br)\dx,
		\end{align}and thus
		\begin{align}\label{tglls8-5}
		\Kr_1+\Kr_2\lleq -\frac12\ddt\io\left|\eps^8\ffpr^{\frac12}\left(\TP^8 P-\TP^8 B\cdot\br\right)\right|^2\dx.
		\end{align}

		Combining \eqref{tglls8-1}-\eqref{tglls8-5}, we conclude that
		\begin{align}\label{tglls8-6}
			\frac12\ddt\io \rhor|\eps^{8}\TP^8 u|^2+|\eps^{8}\TP^{8}B|^2+\left|\eps^8\ffpr^{\frac12}\left(\TP^8 P-\TP^8 B\cdot\br\right)\right|^2\dx \le P(E(t)).
		\end{align}

\subsection{Picard iteration}
With the uniform-in-$(n,\eps)$ bounds for the basic state established in Proposition \ref{prop Ell}, we can now proceed the Picard iteration, that is, the strong convergence of the sequence $\{(u^\nnn, B^\nnn, S^\nnn, P^\nnn)\}_{n\in\N}$. For a function sequence $\{f^\nnn\}$, we define $[f]^\nnn:=f^\nnr-f^\nnn$. Thus, we can introduce the linear system for $\{[u]^\nnn, [B]^\nnn, [S]^\nnn, [P]^\nnn\}$ as below
\begin{equation}\label{CMHDllnn}
\begin{cases}
\rho^\nnn D_t^\nnn [u]^\nnn-(B^\nnn\cdot\nab)[B]^\nnn + \nab [P]^\nnn= -f_u^\nnn & \text{ in }[0,T]\times\Om,\\
\ffp^\nnn D_t^\nnn [P]^\nnn-\ffp^\nnn D_t^\nnn [B]^\nnn\cdot B^\nnn + \nab \cdot [u]^\nnn= -f_P^\nnn &\text{ in }[0,T]\times\Om,\\
D_t^\nnn [B]^\nnn = (B^\nnn\cdot\nab)[u]^\nnn + B^\nnn(\nab\cdot [u]^\nnn)=-f_B^\nnn  &\text{ in }[0,T]\times\Om,\\
D_t^\nnn [S]^\nnn= -f_S^\nnn  &\text{ in }[0,T]\times\Om,\\
u_3^\nnr=u_3^\nnn=B_3^\nnr=B_3^\nnn=0 &\text{ on }[0,T]\times\Sigma,\\
([u]^\nnn,[B]^\nnn,[S]^\nnn,[P]^\nnn)|_{t=0}=(\vec{0},\vec{0},0,0).
\end{cases}
\end{equation}Here $D_t^\nnn:=\p_t+u^\nnn\cdot\nab$. The source terms $f_u^\nnn,f_P^\nnn,f_B^\nnn, f_S^\nnn$ are defined as below:
\begin{align}
f_u^\nnn:=&~[\rho]^\nnl\p_t u^\nnn +[\rho u]^\nnl\cdot\nab u^\nnn - [B]^\nnl\cdot\nab B^\nnn,\\
f_P^\nnn:=&~[\ffp]^\nnl\p_tP^\nnn+[\ffp u]\cdot\nab P^\nnn \notag\\
& - ([\ffp]^\nnl\p_t B^\nnn+[\ffp u]^\nnl\cdot\nab B^\nnn)\cdot B^\nnn - \ffp^\nnl D_t^\nnl B^\nnn\cdot[B]^\nnl,\\
f_B^\nnn:=&~[u]^\nnl\cdot\nab B^\nnn - [B]^\nnl\cdot\nab u^\nnn +[B]^\nnl(\nab\cdot u^\nnn),\\
f_S^\nnn:=&~[u]^\nnl\cdot\nab S^\nnn.
\end{align}

For $n\geq 1, n\in\N^*$, we define the energy functional for the linear system \eqref{CMHDllnn} by
\begin{align}
[\Er]^\nnn(t):=&~[\Er]_3^\nnn(t)+\cdots+[\Er]_6^\nnn(t), \notag\\
[\Er]_{3+l}^\nnn(t):=&\sum_{\lee\alpha\ree=2l}\sum_{k=0}^{3-l}\left\|\eps^{(k-1)_++2l} \TT^{\alpha}\p_t^k\left( u,  B,  S,\ffpr^{\frac{(k+\alpha_0-l-2)_+}{2}} P)\right)\right\|_{3-k-l}^2,\quad 0\leq l\leq 3.
\end{align}

We aim to prove the following result
\begin{prop}\label{prop Ellnn}
There exists a time $T_2>0$ depending on $K_0$ (independent of $n,\eps$) such that
\begin{align}
\forall 2\leq n\in\N^*,\quad \sup_{0\leq t\leq T_2} [\Er]^\nnn(t)\leq \frac14\left(\sup_{0\leq t\leq T_2} [\Er]^\nnl(t)+\sup_{0\leq t\leq T_2} [\Er]^\nnll(t)\right).
\end{align}
\end{prop}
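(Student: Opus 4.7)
The plan is to run essentially the same energy-estimate architecture used in Proposition \ref{prop Ell}, but for the linear difference system \eqref{CMHDllnn} and at one lower regularity level (indices $3+l$, $0\le l\le 3$, rather than $4+l$, $0\le l\le 4$). The principal part of \eqref{CMHDllnn} is the same symmetric hyperbolic operator as in \eqref{CMHDll0} with coefficients frozen at iterate $(n)$, whose weighted anisotropic norms are uniformly controlled by $K_0$ via \eqref{energy Err}. Consequently the $L^2$ estimate, the div-curl analysis (with the hidden Lorentz-force cancellation of Section \ref{sect stat 2} operating identically because it is a structural property of the principal part), and the full tangential estimates of Propositions \ref{tgs8}, \ref{tgt8-0}, \ref{tgst8} transcribe verbatim. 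This yields a differential inequality of the shape
\[
\frac{d}{dt} [\Er]^{(n)}(t) \le P(K_0)\,[\Er]^{(n)}(t) + P(K_0)\bigl([\Er]^{(n)}(t)\bigr)^{1/2}\mathcal{N}^{(n)}(t) + \mathcal{N}^{(n)}(t)^2,
\]
where $\mathcal{N}^{(n)}(t)$ is a suitably weighted anisotropic $H_*$-norm of the concatenation $(f_u^{(n)}, f_P^{(n)}, f_B^{(n)}, f_S^{(n)})$, using the same Mach-number assignment as in $[\Er]^{(n)}$ itself.

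The next step is to bound $\mathcal{N}^{(n)}$ by the two previous energies. Because $\ff_\eps$ and all its pressure derivatives are $O(\eps^2)$ by \eqref{ff property}, the fundamental theorem of calculus gives
\[
[\rho]^{(n-1)} = O(\eps^2)\bigl([p]^{(n-1)},[S]^{(n-1)}\bigr),\qquad [\ffp]^{(n-1)} = O(\eps^2)\bigl([p]^{(n-1)},[S]^{(n-1)}\bigr),
\]
so every occurrence of $[\rho]^{(n-1)}$ or $[\ffp]^{(n-1)}$ in $f_u^{(n)}, f_P^{(n)}, f_B^{(n)}, f_S^{(n)}$ carries an intrinsic $\eps^2$ weight that matches the Mach-number weighting of $[\Er]^{(n-1)}$. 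Writing each source term as \emph{(basic-state factor bounded by $K_0$)}$\times$\emph{(difference at level $n{-}1$ or $n{-}2$)} and applying Moser-type product estimates in the anisotropic Sobolev spaces gives
\[
\sup_{0\le t\le T_2}\mathcal{N}^{(n)}(t) \le P(K_0)\Bigl(\sup_{0\le t\le T_2}[\Er]^{(n-1)}(t)^{1/2} + \sup_{0\le t\le T_2}[\Er]^{(n-2)}(t)^{1/2}\Bigr).
\]

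Since the initial data for the difference system vanish, $[\Er]^{(n)}(0)=0$. Applying Young's inequality to absorb the cross term into $[\Er]^{(n)}$ and $\mathcal{N}^{(n)2}$, then Gr\"onwall's inequality on $[0,T_2]$, produces
\[
\sup_{0\le t\le T_2}[\Er]^{(n)}(t) \le T_2\,C(K_0)\,\sup_{0\le t\le T_2}\bigl([\Er]^{(n-1)}(t) + [\Er]^{(n-2)}(t)\bigr),
\]
and one finishes by fixing $T_2=T_2(K_0)$ so small that $T_2\,C(K_0)\le \tfrac14$.

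The main obstacle, which is where the most care is needed, is the Mach-number bookkeeping in the source-term estimate, particularly for $f_P^{(n)}$. Terms such as $\ffp^{(n-1)} D_t^{(n-1)} B^{(n)}\cdot [B]^{(n-1)}$ or $[\ffp]^{(n-1)}\p_t P^{(n)}$ pair a full-weight basic-state time derivative with a difference, and one must verify at every index $3+l$ and every admissible derivative count $(k,\lee\alpha\ree)$ that the built-in $\eps^2$ factors combined with the $\eps^{(k-1)_+ + 2l}$ scaling prescribed by $[\Er]^{(n-1)}$ reproduce precisely the weight used in $[\Er]^{(n)}$, rather than leaving a negative power of $\eps$. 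The same reduction of pressure as in Section \ref{sect reduction q}, applied to $[P]^{(n)}$ via the difference momentum equation, is needed at the top regularity in order to recover $[p]$-norms with the correct weights, exactly as was done for \eqref{CMHD3}.
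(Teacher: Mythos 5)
Your proposal follows essentially the same route as the paper, which only sketches this step: rerun the linear energy machinery of Proposition \ref{prop Ell} for the difference system \eqref{CMHDllnn} at the lower regularity $H_*^6$ (indices $3+l$, $0\le l\le 3$), observe that the sources involve only first-order derivatives of the basic state so no regularity is lost and the hidden Lorentz-force structure survives in the underlined term of \eqref{curlB llnn}, and close by Gr\"onwall with vanishing initial data and a small $T_2(K_0)$. One minor inaccuracy: since $\p\rho/\p S=O(1)$ (only $\p\rho/\p p=O(\eps^2)$), the claim $[\rho]^{(n-1)}=O(\eps^2)\bigl([p]^{(n-1)},[S]^{(n-1)}\bigr)$ is false for the entropy part; this is harmless because $[S]^{(n-1)}$ carries the same Mach-number weights as $[u]^{(n-1)},[B]^{(n-1)}$ in $[\Er]^{(n-1)}$ and so needs no extra factor of $\eps^2$, but the bookkeeping should be stated accordingly.
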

With this proposition, the local existence of the original MHD system \eqref{CMHD3} is established. Indeed, by induction on $n$, it is easy to see that
\[
\sup_{0\leq t\leq T_2} [\Er]^\nnn(t)\leq \frac{1}{2^{n-1}}P(K_0)\to 0\quad\text{ as }n\to\infty.
\]Hence, the sequence of approximate solutions $\{(u^\nnn, B^\nnn, S^\nnn, P^\nnn)\}_{n\in\N}$ has a strongly convergent subsequence whose limit is denoted by $\{(u^{(\infty)}, B^{(\infty)}, S^{(\infty)}, P^{(\infty)})\}_{n\in\N}$. Then one can define $p^{(\infty)}:=P^{(\infty)}-\frac12|B^{(\infty)}|^2$ and define $\rho^{(\infty)}$ from the equation of state $p^{(\infty)}=p(\rho^{(\infty)}, S^{(\infty)})$. These limit functions exactly satisfy the original MHD system \eqref{CMHD3}, which leads to the local existence and uniform-in-$\eps$ estimates of \eqref{CMHD3}. The uniqueness follows from a parallel argument as in Proposition \ref{prop Ellnn}.

\begin{proof}[Sketch of the proof of Proposition \ref{prop Ellnn}]
The proof is again substantially the same as Proposition \ref{prop Ell}, so we no longer repeat the tedious details. In particular, the energy $[\Er]^\nnn(t)$ gives a weighted anisotropic Sobolev norm in $H_*^6(\Om)$. Also, we note that the source terms $f_u, f_P, f_B, f_S$ only contain the derivatives of the basic state up to 1st order. Therefore, these source terms do not bring any loss of regularity. Here, we only show that the hidden structure in the vorticity analysis still holds and omit all the other steps. The vorticity-current equations now reads
\begin{align}
&\rho^\nnn D_t^\nnn(\nab\times [u]^\nnn) - (B^\nnn\cdot\nab)(\nab\times [B]^\nnn) \notag \\
\label{curlu llnn}= &-\nab\times f_u^\nnn -\nab\rho^\nnn\times D_t^\nnn [u]^\nnn + (\nab B_j^\nnn)\times(\p_j [B]^\nnn)+ \rho^\nnn[D_t^\nnn,\nab\times][u]^\nnn,\\
&D_t^\nnn(\nab\times[B]^\nnn)-(B^\nnn\cdot\nab)(\nab\times[u]^\nnn)-\underline{B^\nnn\times(\nab(\nab\cdot[u]^\nnn))}\notag\\
\label{curlB llnn}= &-\nab\times f_B^\nnn + [D_t^\nnn,\nab\times][B]^\nnn + (\nab B_j^\nnn)\times(\p_j [u]^\nnn) - (\nab\times B^\nnn)(\nab\cdot[u]^\nnn).
\end{align}
We notice that the underlined term has the same structure as in \eqref{curlB ll}, so we can again still mimic the analysis for the linearized problem \eqref{CMHDll0} to treat this term in the vorticity analysis.
\end{proof}

\end{appendix}

	\end{document}